\titleformat*{\section}{\normalsize\bfseries\centering}
\titleformat*{\subsection}{\normalsize\itshape}
\newtheoremstyle{fact}
     {\topsep}
     {\topsep}
     {\slshape}
     {}
     {\bfseries}
     {}
     { }
     {\thmname{#1}\thmnumber{ #2.}\thmnote{ \rm (#3)}}
\newtheoremstyle{mylabel} 
	{\topsep}
	{\topsep}
	{\itshape}
	{}
	{\bfseries}
	{}
	{ }
	{\thmname{#1}\thmnote{ #3}.} 
\newtheorem{theorem}{Theorem}[section]
\newtheorem{Ltheorem}{Theorem}
\newtheorem*{theorem*}{Theorem} 
\newtheorem{lemma}[theorem]{Lemma}
\newtheorem{proposition}[theorem]{Proposition}
\newtheorem{corollary}[theorem]{Corollary}
\newtheorem{problem}{Problem}
\theoremstyle{definition}
\newtheorem{definition}[theorem]{Definition}
\newtheorem{remark}[theorem]{Remark}
\newtheorem*{remark*}{Remark}
\newtheorem*{question*}{Question}
\newtheorem{examples}[theorem]{Examples}
\newtheorem*{examples*}{Examples}
\newtheorem{example}[theorem]{Example}
\newtheorem*{example*}{Example}
\theoremstyle{mylabel}
\newtheorem*{Ltheorem*}{Theorem}
\theoremstyle{fact}
\newtheorem{ftheorem}[theorem]{Theorem}
\def\proofont{\fontseries{bx}\fontshape{sc}\selectfont}
\def\proofname{Proof. }
\newcommand{\pcite}[2]{{\cite[#1]{#2}}}
\newcommand{\homeo}{\operatorname{Homeo}}
\newcommand{\colim}{\operatorname*{colim}}
\renewenvironment{proof}[1][\proofname]{\par
  \normalfont
  \topsep6\p@\@plus6\p@ \trivlist
  \item[\hskip\labelsep\noindent\proofont #1]\ignorespaces
}{%
  \qed\endtrivlist
}
\def\@fnsymbol#1{\ifcase#1\or * \or 1 \or 2  \else\@ctrerr\fi\relax}
\let\mytitle\@title
\author{Rafael Dahmen \ and G\'abor Luk\'acs}
\title{Long colimits of topological groups I: Continuous maps and homeomorphisms\thanks{2010 Mathematics Subject Classification: 
Primary 22A05, 46M40; Secondary 22F50, 46E40, 54C35.}}
\begin{document}

\makeatletter
\let\mytitle\@title
\chead{\small\itshape R. Dahmen and G. Luk\'acs / Long colimits of topological groups I}
\fancyhead[RO,LE]{\small \thepage}
\makeatother

\maketitle

\def\thanks#1{}

\thispagestyle{empty}

\begin{abstract}
Given a directed family of topological groups, the finest topology on their union making each injection continuous need not be a group topology, because the multiplication may fail to be jointly continuous. 
This begs the question of when the union is a topological group with respect to this topology. If the family is countable, the answer is well known in most cases. We study this question in the context of so-called {\itshape long} families, which are as far as possible from countable ones. As a first step, we present answers to the question for families of group-valued continuous maps and homeomorphism groups, and provide additional examples.
\end{abstract}

\section{Introduction}

Given a directed family $\{G_\alpha\}_{\alpha \in \mathbb{I}}$ of topological groups with closed embeddings as bonding maps, their union $G=\bigcup\limits_{\alpha\in \mathbb I} G_\alpha$ can be equipped with two topologies: the {\itshape colimit space topology} defined as the finest topology $\mathscr{T}$ making each map $G_\alpha \rightarrow G$ continuous, and the {\itshape colimit group topology}, defined as the finest {\itshape group topology} $\mathscr{A}$ making each map $G_\alpha \rightarrow G$ continuous. The former is always finer than the latter, which begs the question of when the two topologies coincide.

We say that $\{G_\alpha\}_{\alpha \in \mathbb{I}}$ satisfies the {\em algebraic colimit property} (briefly, {\itshape ACP}) if $\mathscr{T}=\mathscr{A}$, that is, if the colimit of $\{G_\alpha\}_{\alpha \in \mathbb{I}}$ in the category $\mathsf{Top}$ of topological spaces and continuous maps coincides with the colimit in the category $\mathsf{Grp(Top)}$ of topological groups and their continuous homomorphisms.

The family $\{G_\alpha\}_{\alpha \in \mathbb{I}}$ satisfies {ACP} if and only
if $(G,\mathscr{T})$ is a topological group.
In general, the inversion $(G,\mathscr{T}) \rightarrow (G,\mathscr{T})$ is continuous, and the multiplication
$m\colon (G,\mathscr{T}) \times (G,\mathscr{T}) \rightarrow (G,\mathscr{T})$ is separately continuous, but need not be jointly continuous. Thus, $\{G_\alpha\}_{\alpha \in \mathbb{I}}$ satisfies {ACP} if and only if $m$ is continuous.

It is well known that ACP holds if: (1) the bonding maps are all open; or (2) $I$ is countable and each $G_\alpha$ is locally compact Hausdorff (\cite[Theorem~4.1]{Hirai1} and \cite[Propositions 4.7 and 5.4]{Gloeckner1}). Yamasaki showed that for countable families of metrizable groups, these two are the only cases where ACP holds. 

\begin{ftheorem}[\pcite{Theorem 4}{Yamasaki}] \label{thm:Yamasaki}
Let $\{G_n\}_{n\in\mathbb N}$ be a countable family of metrizable topological groups with closed embeddings as bonding maps. If
\begin{enumerate}

\item
there is $n$ such that $G_n$ is not locally compact, and

\item
for every $n$ there is $m>n$ such that $G_n$ is not open in $G_m$,

\end{enumerate}
then the colimit space topology $\mathscr{T}$ is not a group topology on $G=\bigcup G_n$, that is, $\{G_n\}_{n\in\mathbb N}$ does not satisfy ACP.
\end{ftheorem}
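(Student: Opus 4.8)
The plan is to show that multiplication fails to be jointly continuous at the identity $e$. Since inversion is continuous and multiplication is separately continuous, continuity of $m$ at $(e,e)$ would propagate to every point by writing $ab=a_0\bigl((a_0^{-1}a)(bb_0^{-1})\bigr)b_0$ and using that translations are homeomorphisms; hence it suffices to produce a $\mathscr T$-open set $U\ni e$ admitting no $\mathscr T$-open $V\ni e$ with $VV\subseteq U$. Before constructing $U$ I would record two reductions. First, a closed subgroup of a locally compact group is locally compact, so if $G_{n_0}$ is not locally compact then, since the bonding maps are closed embeddings, no $G_m$ with $m\ge n_0$ is locally compact; passing to the cofinal tail $\{G_m\}_{m\ge n_0}$ (which does not change the colimit) I may assume every $G_n$ is non-locally-compact. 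Second, using (b) I pass to a further cofinal subfamily so that $G_n$ is closed but not open in $G_{n+1}$ for every $n$. Finally I fix a left-invariant metric $d_n$ on each $G_n$ (Birkhoff--Kakutani). It is convenient to note the reformulation $VV=\bigcup_n(V\cap G_n)^2$, so that $VV\subseteq U$ holds iff $(V\cap G_n)^2\subseteq U\cap G_n$ for every $n$; thus I must defeat, stage by stage, every coherent choice of the traces $V\cap G_n$.

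The construction of $U$ rests on two inputs. From non-local-compactness: in every neighbourhood of $e$ in each $G_n$ there is a sequence with no cluster point in $G_n$; in the generic case this yields, inside every ball $B_n(e,\rho)$, an infinite $\delta$-separated set (which is closed and discrete in $G_n$). From non-openness: since $G_n$ is not a neighbourhood of $e$ in $G_{n+1}$, there are points of $G_{n+1}\setminus G_n$ arbitrarily close to $e$, giving \emph{transverse} null sequences. The witnessing $U$ will be a \emph{tilted} (non-box) neighbourhood: for each dyadic scale $2^{-j}$ I select a separated sequence $S_j=\{s_{j,k}\}_k\subseteq B_{1}(e,2^{-j})$ in the base group and a transverse direction living in a distinct higher stage $m_j$, and I let $U$ \emph{pinch} toward $G_{m_j-1}$ along $S_j$, i.e.\ the $d_{m_j}$-thickness of $U$ transverse to $G_{m_j-1}$ above the point $s_{j,k}$ is required to shrink to $0$ as $k\to\infty$. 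Because each $S_j$ is closed and discrete, these pinches can be prescribed by a continuous positive function, so each trace $U\cap G_m$ remains open and $U$ is genuinely $\mathscr T$-open.

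Given any candidate $V$, its trace $V\cap G_1$ is an open neighbourhood of $e$, hence contains a ball $B_1(e,\rho_1)$ with $\rho_1>0$. Choosing $j$ with $2^{-j}<\rho_1$ puts the \emph{entire} separated sequence $S_j$ inside $V$. At the same time $V\cap G_{m_j}$ contains a transverse element $t\in G_{m_j}\setminus G_{m_j-1}$ of some positive $d_{m_j}$-size $\rho_{m_j}>0$, and $s_{j,k}\,t\in(G_{m_j}\setminus G_{m_j-1})$ since $s_{j,k}\in G_1\subseteq G_{m_j-1}$. Then $s_{j,k}\,t\in(V\cap G_{m_j})^2$ for every $k$, yet for large $k$ the pinch forces $s_{j,k}\,t\notin U$ (its transverse size $\rho_{m_j}$ exceeds the allowed thickness, which tends to $0$ along $S_j$). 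Hence $(V\cap G_{m_j})^2\not\subseteq U\cap G_{m_j}$, so $VV\not\subseteq U$. As $V$ was arbitrary, $U$ witnesses the discontinuity of $m$ at $(e,e)$, and ACP fails.

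The hard part is twofold. The delicate bookkeeping is ensuring that the countably many pinches, indexed by all scales $j$ and carried by transverse directions climbing through the stages, assemble into a single set whose trace on each $G_m$ is open; this is what forces me to spread the transverse directions over distinct stages and to use the closed-discreteness of the $S_j$ so that each pinch is realized by a continuous thickness function. The genuinely subtle case is when some $G_n$ has all balls totally bounded yet is non-locally-compact through \emph{incompleteness} (e.g.\ $\mathbb{Q}$-like groups), where no $\delta$-separated sequence exists; there I must replace separated sequences by Cauchy sequences with no limit and arrange the pinch to converge toward the ``missing'' completion point, which is exactly where a uniform ``no cluster point'' formulation of non-local-compactness is needed to keep the argument uniform across both cases.
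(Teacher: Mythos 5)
First, a point of reference: the paper does not prove this theorem at all --- it is quoted from \cite{Yamasaki} --- so your proposal can only be measured against the standard argument, not against anything in the text.

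Your raw materials and reductions are the right ones (cluster-point-free sequences $S_j\subseteq B_1(e,2^{-j})$ from non-local compactness, transverse null sequences from non-openness, left-invariant metrics, the reformulation $VV\subseteq U\iff (V\cap G_n)^2\subseteq U\cap G_n$ for all $n$), but the construction of the pinched set $U$ has a genuine gap, and it sits exactly where you flagged it. For the attack to work, the thickness function $f_j$ on $G_{m_j}$ must satisfy $f_j\leq\varepsilon_{j,k}$ on a ball of \emph{fixed} radius $r_j$ around $s_{j,k}$ (fixed in $k$, because the transverse witness $t\in V$ is revealed only after $U$ is built, and $s_{j,k}t$ must land in the pinched region for infinitely many $k$), while $f_j$ must stay positive on $G_{m_j-1}$ so that the traces of $U$ on the stages below $m_j$ are all of $G_m$ and the countable intersection defining $U$ is open stagewise. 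Such an $f_j$ exists precisely when the sites are uniformly separated \emph{in the metric $d_{m_j}$ of the ambient stage}; but non-local compactness only yields a set without cluster points, i.e.\ closed and discrete, whose mutual $d_{m_j}$-distances may tend to $0$, and shrinking the radii $r_{j,k}\to 0$ to restore positivity of $f_j$ reverses the quantifiers and kills the attack. This is not a removable technicality: for $G_n=\mathbb{Q}\times\mathbb{R}^{n-1}$ (a family to which the theorem applies) every compatible invariant metric makes every Euclidean-bounded neighbourhood of $0$ in $G_1=\mathbb{Q}$ totally bounded, so infinite separated sets never exist and you are always in your ``subtle case''. There your sketched remedy fails concretely: taking $f_j(x)=\lim_k d_{m_j}(x,s_{j,k})$ (distance to the missing point), invariance gives $f_j(s_{j,k}t)\to d_{m_j}(t,e)$, whereas the quantity that must dominate it is $\rho=d_{m_j}(t,G_{m_j-1})$, which is in general strictly smaller; e.g.\ with $G_{m_j-1}=\mathbb{Q}$, $G_{m_j}=\mathbb{Q}\times\mathbb{R}$, $s_{j,k}\to\sqrt{2}$ and $t=(q,v)$, one has $f_j(s_{j,k}t)=|s_{j,k}+q-\sqrt{2}|+|v|>|v|=\rho$ for \emph{every} $k$, so no product ever leaves $U$.

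The standard proof avoids all of this by pre-committing to countably many products instead of shaping an open set. Choose stages $n_0=\mu_1<\nu_1=\mu_2<\nu_2=\cdots$ with $G_{\mu_j}$ not open in $G_{\nu_j}$ (hypothesis (b)), points $p_{j,k}\in B_{n_0}(e,2^{-j})$ with no cluster point in $G_{n_0}$ (hypothesis (a)), and $x_{j,k}\in G_{\nu_j}\setminus G_{\mu_j}$ with $x_{j,k}\to e$ in $G_{\nu_j}$, and set $D:=\{p_{j,k}x_{j,k}\mid j,k\in\mathbb{N}\}$. Each product lies outside $G_{\mu_j}$, so $e\notin D$ and only the finitely many $j$ with $\mu_j<M$ meet $G_M$; moreover $\{p_{j,k}x_{j,k}\}_k$ is closed and discrete in every $G_M$, because a convergent subsequence would force $p_{j,k_l}=(p_{j,k_l}x_{j,k_l})x_{j,k_l}^{-1}$ to converge (multiplication \emph{is} jointly continuous inside the topological group $G_M$), contradicting that $\{p_{j,k}\}_k$ has no cluster point in $G_M$ --- and it is here, in transferring ``no cluster point'' from $G_{n_0}$ up the tower, that the closed-embedding hypothesis is used. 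Hence $D$ is $\mathscr{T}$-closed and $U:=G\setminus D$ is a $\mathscr{T}$-open neighbourhood of $e$; yet any $\mathscr{T}$-neighbourhood $V$ of $e$ contains $B_{n_0}(e,2^{-j})$ for some $j$, hence all $p_{j,k}$, and contains $x_{j,k}$ for all large $k$, so $p_{j,k}x_{j,k}\in VV\setminus U$. This argument needs no separated/Cauchy dichotomy, no continuous thickness functions, and no positivity bookkeeping; I recommend you replace the pinching construction by it.
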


While ACP is well understood for countable families of metrizable groups, Yamasaki's theorem does not hold in the uncountable case. For example, if $\mathbb I=\omega_1$ and the bonding maps are isometric embeddings, then ACP holds (Corollary~\ref{cofin:cor:Lipschitz}). Similarly, if $\mathbb I=\omega_1$ and $G_\alpha = \prod\limits_{\beta < \alpha} M_\beta$, where each $M_\alpha$ is metrizable, then ACP holds, and the resulting group is the $\Sigma$-product of the $M_\beta$ (Corollary~\ref{cofin:cor:sigma}).

Motivated by these examples, the aim of the present paper is to examine directed families that are as far as possible from countable ones. A directed set $(\mathbb{I},\leq)$ is {\em long} if every countable subset of $\mathbb{I}$ has an upper bound in $\mathbb{I}$. It may be tempting to conjecture that all long families of topological groups satisfy ACP; this, however, is not the case (see Example~\ref{cofin:ex:Bisgaard}). Nevertheless, long families provide a number of interesting examples where ACP holds.

We say that a Hausdorff space $X$ is a {\em long space} if the set $\mathscr{K}(X)$ of compact subsets of $X$ ordered by inclusion is long, that is, every $\sigma$-compact subset of $X$ has a compact closure. The product $\mathbb{L}_{\geq 0}:=\omega_1 \times [0,1)$ equipped with order topology generated by the lexicographic order is called the {\itshape Closed Long Ray}. The {\itshape Long Line} $\mathbb{L}$ is obtained by gluing together two copies of the Closed Long Ray, one with the reverse order and one with the usual order, 
at the boundary points $(0,0)$. 
The connected spaces $\mathbb{L}_{\geq 0}$ and $\mathbb{L}$ share many of the properties of the zero-dimensional space $\omega_1$: they are locally compact Hausdorff, first countable, and normal, but neither metrizable nor paracompact. Both $\mathbb{L}_{\geq 0}$ and $\mathbb{L}$ are long spaces, and any cardinal of uncountable cofinality with the order topology is a long space. 

For a topological space $X$ and a topological group $M$, we put $C_{cpt} (X,M)$ for the set
of continuous compactly supported $M$-valued functions on $X$, where the {\itshape support} of 
$f\colon X \rightarrow M$ is  $\operatorname{cl}_X \{ x \in X \mid f(x) \neq e_M\}$.
We equip $C_{cpt} (X,M)$ with pointwise operations and the uniform topology. The next theorem
shows that for a long space $X$, the uniform topology is the ``natural'' one on $C_{cpt} (X,M)$.

\begin{Ltheorem} \label{thm:main:cts} 
Let $X$ be a long space and $M$ be a metrizable group.
For $K \in \mathscr{K}(X)$, let $C_K(X,M)$ denote the subgroup of $C_{cpt} (X,M)$  consisting of functions with support in $K$.
Then the family $\{C_K(X,M)\}_{K\in\mathscr{K}(X)}$ satisfies ACP and
\begin{align}
\colim\limits_{K \in \mathscr{K}(X)}  C_K(X,M) = C_{cpt} (X,M).
\end{align}
\end{Ltheorem}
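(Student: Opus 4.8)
The plan is to prove that the colimit space topology $\mathscr{T}$ on the union $G=\bigcup_{K\in\mathscr{K}(X)} C_K(X,M)$ coincides with the uniform topology, which I denote $\mathscr{U}$; everything then follows. First I would observe that, as a set, $G=C_{cpt}(X,M)$: each $f\in C_{cpt}(X,M)$ lies in $C_K(X,M)$ for $K=\operatorname{supp}f$, and conversely every member of some $C_K(X,M)$ is compactly supported. Thus the displayed equation is purely topological. Since $(C_{cpt}(X,M),\mathscr{U})$ is a metrizable topological group in which each $C_K(X,M)$ carries the subspace uniform topology, the inclusions $C_K(X,M)\hookrightarrow (C_{cpt}(X,M),\mathscr{U})$ are continuous. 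Hence $\mathscr{U}$ is a group topology making all injections continuous, giving $\mathscr{U}\subseteq\mathscr{A}\subseteq\mathscr{T}$, where $\mathscr{A}$ is the colimit group topology. Consequently, once I establish the reverse inclusion $\mathscr{T}\subseteq\mathscr{U}$, all three topologies agree, which simultaneously proves ACP ($\mathscr{T}=\mathscr{A}$) and identifies the colimit with $(C_{cpt}(X,M),\mathscr{U})$.

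It remains to show that every $\mathscr{T}$-open set $W$ is $\mathscr{U}$-open. By the definition of the colimit space topology, $W\in\mathscr{T}$ means exactly that $W\cap C_K(X,M)$ is open in $C_K(X,M)$ for every $K\in\mathscr{K}(X)$. As $\mathscr{U}$ is metrizable, via $d(f,g)=\sup_{x\in X}\varrho(f(x),g(x))$ for a bounded metric $\varrho$ inducing the topology of $M$, it is enough to verify that $G\setminus W$ is sequentially $\mathscr{U}$-closed. So suppose $g_n\to f$ uniformly with all $g_n\in G\setminus W$, and assume toward a contradiction that $f\in W$. This is where longness enters: the sets $\operatorname{supp}f$ and $\operatorname{supp}g_n$ ($n\in\mathbb{N}$) form a countable family of compact subsets of $X$, so their union is $\sigma$-compact and, $X$ being a long space, has compact closure $K$. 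Then $f$ and every $g_n$ belong to $C_K(X,M)$, and $g_n\to f$ in the uniform topology of $C_K(X,M)$, because the uniform metric on $C_K(X,M)$ is the restriction of $d$. Since $W\cap C_K(X,M)$ is open in $C_K(X,M)$ and contains $f$, we obtain $g_n\in W$ for all large $n$, contradicting $g_n\in G\setminus W$. Therefore $f\notin W$, so $G\setminus W$ is sequentially closed, hence $\mathscr{U}$-closed, and $W$ is $\mathscr{U}$-open.

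The crux, and the only step using longness, is this reverse inclusion, and I expect it to be the main obstacle. Its proof turns on two features working together: metrizability of $\mathscr{U}$ reduces the verification of openness to convergent sequences, while longness forces the countably many functions in any such sequence to be jointly supported in a single compact $K$, thereby collapsing the colimit back onto the one member $C_K(X,M)$ on which the colimit and uniform topologies agree by construction. The supporting facts I would still record, none of which involve longness, are that $\mathscr{U}$ is a metrizable group topology on $C_{cpt}(X,M)$ and that each $C_K(X,M)$ is a closed subgroup carrying the subspace uniform topology; the latter holds because a uniform limit of functions supported in the closed set $K$ is again supported in $K$, which is precisely what makes the bonding maps closed embeddings.
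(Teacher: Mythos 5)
Your core argument is correct, and it takes a genuinely more hands-on route than the paper. The paper treats Theorem~\ref{thm:main:cts} as an instance of general machinery: it equips each $C_K(X,M)$ with the sup-metric, observes that the bonding maps are isometries and that longness of $X$ makes $\{C_K(X,M)\}_{K\in\mathscr{K}(X)}$ a long strict family, and then invokes Corollary~\ref{cofin:cor:Lipschitz}, which itself rests on Theorem~\ref{cofin:thm:Lipschitz} (the colimit topology of a long, uniformly Lipschitz family is induced by the limsup metric) and on the tightness results Proposition~\ref{cofin:prop:admissible} and Corollary~\ref{cofin:cor:product}. Your proof compresses exactly the same two ingredients --- metrizability of $\mathscr{U}$ (so that sequences suffice) and longness (so that the countably many compact supports of a convergent sequence lie in a single compact set $K$, collapsing the problem into the single group $C_K(X,M)$, where colimit and uniform topologies agree by definition) --- into one direct sequential argument for $\mathscr{T}\subseteq\mathscr{U}$. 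What the paper's detour buys is reusability: the same lemmas immediately give the Banach-space refinement of this theorem (via Corollary~\ref{cofin:cor:Banach}) and are used throughout the paper; what yours buys is a short, self-contained proof of this one statement.

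There is, however, one load-bearing step you treat too lightly: the assertion that the uniform topology $\mathscr{U}$ is a \emph{group} topology on $C_{cpt}(X,M)$. In your chain $\mathscr{U}\subseteq\mathscr{A}\subseteq\mathscr{T}\subseteq\mathscr{U}$ this is precisely what delivers ACP, and it is not a formal triviality for a noncommutative metrizable $M$: pointwise multiplication need not be continuous in a sup-metric, because multiplication in $M$ is not uniformly continuous. (For instance, on the group of all bounded continuous maps $\mathbb{N}\rightarrow\homeo([0,1])$ with the sup-metric, even left translations fail to be continuous; it is compactness of the ranges of compactly supported functions that saves the day.) The fact you need is true, but a proof must exploit compact ranges (equicontinuity of conjugation over compact subsets of $M$), or else be obtained by the same trick you use for $\mathscr{T}\subseteq\mathscr{U}$: each $C_K(X,M)$ is a topological group, since it embeds into $C(K,M)$ with the compact-open ($=$ uniform) topology, and if $f_n\rightarrow f$ and $g_n\rightarrow g$ uniformly, then longness puts all supports inside one compact $K$, whence $f_ng_n^{-1}\rightarrow fg^{-1}$ inside $C_K(X,M)$; metrizability of $\mathscr{U}$ makes this sequential continuity sufficient. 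Alternatively, note that $\mathscr{U}\subseteq\mathscr{T}$ holds with no group structure at all (because $\mathscr{T}$ is the finest topology, group or not, making the inclusions continuous), so your sequential argument already yields $\mathscr{T}=\mathscr{U}$; then $\mathscr{T}\times\mathscr{T}$ is metrizable, hence countably tight, and Corollary~\ref{cofin:cor:product} gives ACP --- which is exactly how the paper sidesteps the issue. Any of these patches is routine, but some patch is genuinely required.
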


Recall that a space $X$ is {\itshape pseudocompact} if every continuous real-valued map on $X$ is bounded. 
One says that $X$ is {\itshape $\omega$-bounded} if every countable subset of $X$ is contained in a compact subset.
Long spaces have previously been studied under the names {\itshape strongly $\omega$-bounded} and {\itshape $\sigma C$-bounded} (\cite{Nyikos2007} and \cite{vanMill2013}).
The relationship between a Hausdorff space $X$ being a long space and other compactness-like properties is as follows:
\begin{align}
\mbox{compact } \Longrightarrow
\mbox{ long }  \Longrightarrow
\mbox{ $\omega$-bounded } \Longrightarrow
\mbox{ pseudocompact},
\end{align}
and for metrizable spaces they are all equivalent. Nyikos showed that an $\omega$-bounded space need not be long (\cite{Nyikos2007}). The next theorem shows that the requirement of $X$ being long cannot be omitted from Theorem~\ref{thm:main:cts}.

\begin{Ltheorem} \label{thm:main:neg-cts}
Let $X$ be a locally compact Hausdorff space and $V$ a non-trivial metrizable topological vector space. If $X$ is not pseudocompact, then the following statements are equivalent:

\begin{enumerate}[label={\rm (\roman*)}]

\item
the family $\{C_K(X,V)\}_{K\in\mathscr{K}(X)}$ satisfies ACP;

\item 
$X \cong \mathbb{N}$ (discrete) and $V$ is finite dimensional.

\end{enumerate}
\end{Ltheorem}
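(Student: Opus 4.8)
The plan is to prove the two implications separately; the forward one is routine and the reverse one carries all the difficulty.

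For \textbf{(ii)$\Rightarrow$(i)} I would argue in a few lines. If $X\cong\mathbb N$ is discrete, then $\mathscr K(X)$ is the countable directed set of finite subsets, cofinal with the chain $\{1,\dots,n\}_{n}$, and $C_{\{1,\dots,n\}}(X,V)\cong V^{n}$ because on a finite set uniform and pointwise convergence agree. A finite-dimensional metrizable topological vector space is isomorphic to $\mathbb R^{d}$ (or $\mathbb C^{d}$), hence locally compact, and the coordinate inclusions $V^{n}\hookrightarrow V^{n+1}$ are closed embeddings. Thus $\{C_{\{1,\dots,n\}}(X,V)\}_{n}$ is a countable family of locally compact Hausdorff groups, and ACP follows from the classical countable locally compact case (case~(2) recalled in the Introduction).

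For \textbf{(i)$\Rightarrow$(ii)} I prove the contrapositive, so I assume $\neg$(ii), i.e.\ $V$ is infinite-dimensional or $X\not\cong\mathbb N$ (discrete), and show the multiplication $m$ is not jointly continuous at $0$. The common starting point uses non-pseudocompactness: since $X$ is locally compact Hausdorff and not pseudocompact, I can extract an \emph{escaping sequence}, a locally finite family $(U_{n})_{n}$ of nonempty, relatively compact open sets with pairwise disjoint closures. This sequence is usable for two reasons. First, $Y:=\bigcup_n U_n$ is an open subspace equal to the topological sum $\coprod_n U_n$ (each $U_n$ is clopen in $Y$ by disjointness of closures and local finiteness), so a function with compact support inside $Y$ extends by zero to a continuous function on $X$. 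Second, fixing bumps $\varphi_n$ supported in $U_n$ and setting $L_{n}=\operatorname{cl}U_{1}\cup\dots\cup\operatorname{cl}U_{n}$, I obtain a countable subfamily $\{C_{L_{n}}(X,V)\}_{n}$ of metrizable groups whose inclusions $C_{L_{n}}\hookrightarrow C_{L_{n+1}}$ are \emph{never open}: since the closures are disjoint, $C_{L_{n+1}}\cong C_{L_n}\times C_{\operatorname{cl}U_{n+1}}(X,V)$ and the second factor is a non-discrete group as $V\neq 0$. Hence hypothesis~(2) of Theorem~\ref{thm:Yamasaki} can never be defeated, and for this subfamily ACP forces every stage $C_{L_n}(X,V)$ to be locally compact.

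Now suppose $V$ is infinite-dimensional: then $v\mapsto\varphi_{1}v$ embeds $V$ as a closed subspace of $C_{\operatorname{cl}U_{1}}(X,V)$ (evaluation at a point where $\varphi_1=1$ is a continuous left inverse), so by the Riesz lemma this stage is not locally compact. If instead $V$ is finite-dimensional but $X$ is not discrete, I place a non-isolated point $p$ inside $U_1$; then $\operatorname{cl}U_1$ carries a nontrivial convergent sequence, along which disjointly supported bumps give a bounded $1$-separated sequence, so the stage is again not locally compact, even for $V=\mathbb R$. Either way some $C_{L_n}(X,V)$ fails to be locally compact, so by the previous paragraph the \emph{countable} subfamily violates ACP. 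It remains to transfer this failure to the full family $\{C_K(X,V)\}_{K\in\mathscr K(X)}$. The clean route would be a continuous retraction of $C_{cpt}(X,V)$ onto the subfamily colimit, exhibiting the latter as a subspace of $(C_{cpt}(X,V),\mathscr T)$; but multiplication by a bump is not idempotent, and when $X$ is connected there are no clopen sets from which to build an honest retraction. \textbf{This transfer is the first main obstacle.} I expect to resolve it not by a retraction but by importing Yamasaki's witnessing construction directly into the large colimit: the escaping sequence realizes his double sequence inside $C_{cpt}(X,V)$, and local finiteness guarantees that the stagewise neighbourhood estimates remain valid in $\mathscr T$, yielding a single $\mathscr T$-open $U\ni 0$ that admits no $\mathscr T$-open $W$ with $W+W\subseteq U$.

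Finally, with $V$ finite-dimensional and $X$ discrete, it remains to show $X$ is countable; equivalently, that for uncountable $I$ the direct sum $\bigoplus_{I}V=C_{cpt}(X,V)$ with the colimit-space topology $\mathscr T$ is not a topological group. Here Theorem~\ref{thm:Yamasaki} is useless, since every countable subfamily is a direct sum of finite-dimensional, hence locally compact, groups and satisfies ACP; a genuinely uncountable argument is required. Conceptually this is the failure of the Top-colimit to commute with finite products over an uncountable index: $m$ is continuous out of $\colim_{(F,F')}\bigl(V^{F}\times V^{F'}\bigr)$, so ACP would require that for every $\mathscr T$-open $U$ the set $m^{-1}(U)$ be open in $\mathscr T\times\mathscr T$, and I must produce one $U$ for which it is not. \textbf{This is the second and principal obstacle.} I would attack it by a pressing-down argument: any $\mathscr T$-open $W\ni 0$ determines radii $s_i>0$ with $s_i e_i\in W$, and by uncountability some level set $\{i:s_i>1/k\}$ is uncountable; the task is to design in advance a single $\mathscr T$-open $U$ that cannot contain $W+W$ for any $W$, exploiting such an uncountable level set. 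The delicacy—and the precise reason the countable analogue yields a topological group instead—is that a $\mathscr T$-open set need not contain a box of uniform width across a finite set of coordinates, so any naive support-counting recipe for $U$ fails to be open; the construction must therefore coordinate the obstruction across uncountably many coordinates rather than along a single increasing chain.
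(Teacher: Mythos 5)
Your implication (ii)$\Rightarrow$(i) is the same as the paper's, and your Yamasaki set-up along the chain $L_n=\operatorname{cl}U_1\cup\dots\cup\operatorname{cl}U_n$ is sound (your product decomposition $C_{L_{n+1}}\cong C_{L_n}\times C_{\operatorname{cl}U_{n+1}}(X,V)$, ruling out open inclusions, is a legitimate substitute for the paper's argument that a proper subgroup cannot be open in the path-connected group $C_{K_m}(X,V)$). But both obstacles you flag are genuine gaps, and the paper closes the first with a single lemma you are missing (Remark~\ref{remark:cts:acp}): if a directed family with closed embeddings satisfies ACP, then for every \emph{closed subgroup} $H$ of the colimit one has $H=\colim\limits_{\alpha}(G_\alpha\cap H)$, because a closed subspace of a colimit carries the colimit topology of its traces (Engelking, Proposition 2.4.15). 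This lets the paper argue in the forward direction rather than your contrapositive: assume (i); the functions supported in a regular closed $\sigma$-compact non-compact set $A$ form a closed subgroup $H$ of the colimit, so $\{C_{K\cap A}(X,V)\}_{K\in\mathscr{K}(X)}$ again satisfies ACP; that family has the countable cofinal chain $\{K_n\}$, so Theorem~\ref{thm:Yamasaki} applies to it directly. Your route instead requires producing a $\mathscr{T}$-open witness against joint continuity of addition in the colimit over \emph{all} of $\mathscr{K}(X)$, where openness must be verified against every compact set, not just your chain; ``importing Yamasaki's construction into the large colimit'' is exactly the hard step, and you have not carried it out. The restriction lemma is what makes this transfer trivial, and you would do better to prove it (it is a short exercise about colimit topologies and closed subspaces) than to attempt the direct construction.

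The second obstacle --- uncountable discrete $X$ with finite-dimensional $V$ --- is also left unresolved: you sketch a pressing-down idea and correctly diagnose why naive recipes fail, but produce no construction. The paper does not prove this from scratch either; it invokes Bisgaard's theorem (Example~\ref{cofin:ex:Bisgaard}) that addition on $\mathbb{R}^{(J)}=\colim\limits_{F\in[J]^{<\omega}}\mathbb{R}^F$ is discontinuous in the colimit space topology whenever $J$ is uncountable, and reduces to it --- once more via Remark~\ref{remark:cts:acp} --- by restricting to the closed subgroup of functions with values in a fixed one-dimensional subspace $W\subseteq V$. So completing your proof needs both the closed-subgroup lemma and either Bisgaard's result or a proof of it. Finally, one smaller but real error: your claim that a non-isolated point of $U_1$ yields ``a nontrivial convergent sequence'' in $\operatorname{cl}U_1$ fails for non-metrizable $X$ (a locally compact Hausdorff space can contain, say, a clopen copy of $[0,\omega_1]$, whose top point is non-isolated yet is the limit of no nontrivial sequence). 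What you actually need is only that $U_1$ is infinite: an infinite Hausdorff open set contains infinitely many pairwise disjoint nonempty open subsets, whose bump functions show $C_{\operatorname{cl}U_1}(X,\mathbb{R})$ is an infinite-dimensional normed space, hence not locally compact by Riesz's theorem --- no convergent sequence is required.
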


The proofs of Theorems~\ref{thm:main:cts} and~\ref{thm:main:neg-cts} are presented in Section~\ref{sect:cts}, and are based on basic properties of long colimits established in Section~\ref{sect:cofin}.

Another interesting way to obtain a family of topological groups indexed by the compact subsets of a space $X$ is considering $\homeo_K(X)$, the homeomorphisms that are the identity outside a given compact set $K \in \mathscr{K}(X)$, equipped with the compact open topology. The notion of support, which plays a central role in Theorem~\ref{thm:main:cts}, can also be defined for homeomorphisms: the {\itshape support} of  $h\colon X\rightarrow X$ is $\operatorname{cl}_X\{x \in X \mid h(x)\neq x\}$. It is not hard to show that $\{\homeo_K(\mathbb{R})\}_{K \in \mathscr{K}(\mathbb{R})}$ does not satisfy ACP. In fact, a more general negative result, in the flavour of  Theorem~\ref{thm:main:neg-cts}, holds.

\begin{Ltheorem} \label{thm:main:neg-homeo}
Let $X$ be a locally compact metrizable space that is not compact, and suppose that $\homeo_{\overline U} (X)$ is not locally compact for every non-empty open set $U$ with $\overline{U}$ compact. Then $\{\homeo_K(X)\}_{K \in \mathscr{K}(X)}$ does not satisfy ACP.
\end{Ltheorem}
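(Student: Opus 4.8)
The plan is to reduce the statement to Yamasaki's theorem (Theorem~\ref{thm:Yamasaki}) applied to a carefully chosen countable chain of compact sets, and then to transfer the resulting failure of ACP from the chain back to the full family indexed by $\mathscr{K}(X)$. First I would exploit non-compactness: since $X$ is metrizable and not compact, it is not sequentially compact, so there is a sequence with no cluster point; after passing to a subsequence of distinct points $p_n$, the set $\{p_n\}$ is closed and discrete. Using metrizability (hence normality) together with local compactness, I would choose pairwise disjoint open sets $U_n \ni p_n$ with $\overline{U_n}$ compact such that the family $\{\overline{U_n}\}_n$ is locally finite, and set $K_n := \overline{U_1}\cup\dots\cup\overline{U_n} \in \mathscr{K}(X)$. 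Each $K_n$ has a compact, hence second countable, neighbourhood, so $\homeo_{K_n}(X)$ is metrizable; and by hypothesis each $\homeo_{\overline{U_n}}(X)$ is not locally compact.

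Next I would verify the two hypotheses of Theorem~\ref{thm:Yamasaki} for the chain $\{\homeo_{K_n}(X)\}_n$, whose bonding maps are closed embeddings as in the standing hypotheses. For condition~(1), note that $\homeo_{\overline{U_1}}(X)$ is a closed subgroup of $\homeo_{K_n}(X)$; since closed subgroups of locally compact groups are locally compact, the non-local-compactness of $\homeo_{\overline{U_1}}(X)$ forces $\homeo_{K_n}(X)$ itself to be non-locally-compact. For condition~(2) I would take $m=n+1$: as $\homeo_{\overline{U_{n+1}}}(X)$ is not locally compact it is not discrete, so every neighbourhood of $e$ in $\homeo_{K_{n+1}}(X)$ contains a nontrivial homeomorphism supported in $\overline{U_{n+1}}$; since $\overline{U_{n+1}}\cap K_n=\varnothing$, such an element does not lie in $\homeo_{K_n}(X)$, whence $\homeo_{K_n}(X)$ is not open in $\homeo_{K_{n+1}}(X)$. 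Theorem~\ref{thm:Yamasaki} then yields that the chain does not satisfy ACP, i.e.\ the $\mathsf{Top}$-colimit topology on $H := \bigcup_n \homeo_{K_n}(X)$ is not a group topology.

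The final, and most delicate, step is to transfer this to the full family. I would identify $H$ with the subgroup of $\homeo_{\mathrm{cpt}}(X)=\bigcup_{K\in\mathscr{K}(X)}\homeo_K(X)$ consisting of homeomorphisms whose support lies in $L:=\bigcup_n\overline{U_n}$, the point being that local finiteness makes every compact subset of $L$ lie in some $K_n$. Writing $(G,\mathscr{T})$ for the full colimit, I would first check that $H$ is $\mathscr{T}$-closed, since for each $K$ the trace $H\cap\homeo_K(X)$ is the set of $h$ fixing $X\setminus L$ pointwise, a closed condition. The heart of the matter is to show that the subspace topology $H$ inherits from $(G,\mathscr{T})$ coincides with its countable colimit topology. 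This uses only the closed-embedding hypothesis and local finiteness: given $B\subseteq H$ whose trace on each $\homeo_{K_n}(X)$ is closed, and given any $K\in\mathscr{K}(X)$, the compact set $K\cap L$ lies in some $K_N$, so $H\cap\homeo_K(X)\subseteq\homeo_{K_N}(X)$; enlarging to $K':=K\cup K_N$ and using that $B\cap\homeo_{K_N}(X)$ is closed in $\homeo_{K'}(X)$ shows that $B\cap\homeo_K(X)$ is closed in $\homeo_K(X)$, so $B$ is $\mathscr{T}$-closed. Consequently, were the full family to satisfy ACP, then $(G,\mathscr{T})$ would be a topological group and its closed subgroup $H$, carrying the subspace topology equal to the countable colimit topology, would be one as well — contradicting the previous paragraph. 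Hence $\{\homeo_K(X)\}_{K\in\mathscr{K}(X)}$ does not satisfy ACP.

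I expect the transfer step to be the main obstacle, precisely because the chain $\{K_n\}$ is \emph{not} cofinal in $\mathscr{K}(X)$ when $X$ fails to be $\sigma$-compact, so one cannot simply invoke cofinality to replace the full colimit by the countable one. Everything hinges on realizing the colimit over the chain as a \emph{closed topological subgroup} of the full colimit, and it is exactly here that the hypothesis of closed embeddings and the local finiteness of $\{\overline{U_n}\}$ do the work; the relevant stability of colimits under passing to such subfamilies should follow from the general properties of long colimits developed in Section~\ref{sect:cofin}. The remaining ingredients — metrizability of $\homeo_{K_n}(X)$ and the permanence of local compactness to closed subgroups — are routine.
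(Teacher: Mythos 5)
Your proof is correct, and its skeleton --- Yamasaki's Theorem~\ref{thm:Yamasaki} applied to a countable chain of compacta clustered along an infinite closed discrete set, followed by a transfer of ACP from the full family $\{\homeo_K(X)\}_{K\in\mathscr{K}(X)}$ to that chain via a closed subgroup --- is the same as the paper's. The organization, however, is genuinely different, and in a way that makes your argument more self-contained for this particular statement. The paper factors everything through a dichotomy theorem (Theorem~\ref{homeo:thm:neg-acp}): there the chain $\{K_n\}$ is an arbitrary increasing cofinal chain in $\mathscr{K}(A)$, so after applying Theorem~\ref{thm:Yamasaki} one must still deal with the alternative that some $\homeo_{K_{n_0}}(X)$ is open in all later groups; the paper converts that openness into discreteness of $\homeo_C(X)$ for every compact $C$ outside $K_0$ (via the embedding $\homeo_{K_0}(X)\times\homeo_C(X)\rightarrow\homeo_L(X)$ for disjointly supported factors), and only then gets a contradiction from non-discreteness near points of the discrete set $T$ (implication (iv)$\Rightarrow$(v) of Theorem~\ref{thm:main:neg-homeo}$'$). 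You short-circuit this dichotomy by building the chain out of \emph{pairwise disjoint} compacta: since $K_n\cap\overline{U_{n+1}}=\emptyset$, one has $\homeo_{K_n}(X)\cap\homeo_{\overline{U_{n+1}}}(X)=\{e\}$, so the non-discreteness of $\homeo_{\overline{U_{n+1}}}(X)$ (a consequence of its non-local-compactness) verifies hypothesis (b) of Theorem~\ref{thm:Yamasaki} directly --- the same disjoint-support mechanism the paper uses, but deployed to check Yamasaki's hypotheses rather than to unwind one horn of a dichotomy. Your transfer step is a hands-on instantiation of the paper's Remark~\ref{remark:cts:acp}: the subgroup $H$ of homeomorphisms supported in $L=\bigcup_n\overline{U_n}$ is closed in the colimit, and local finiteness makes $\{K_n\}$ cofinal in $\mathscr{K}(L)$, so your explicit verification that the subspace topology on $H$ equals the countable colimit topology is exactly the content the paper imports from \cite[Proposition 2.4.15]{Engel}. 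What the paper's longer route buys is generality: Theorem~\ref{homeo:thm:neg-acp} applies to locally compact \emph{Hausdorff} spaces containing a suitable metrizable piece, and yields the strictly stronger Theorem~\ref{thm:main:neg-homeo}$'$, of which the statement here is only the case (ii)$\Rightarrow$(v); what your route buys is brevity and a cleaner application of Yamasaki for the metrizable statement itself. The remaining ingredients you invoke (collectionwise normality of metric spaces to get the locally finite disjoint family, metrizability of $\homeo_{K_n}(X)$ from compact metrizable $K_n$, and permanence of local compactness under closed subgroups) are all correct and routine.
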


The proof of Theorem~\ref{thm:main:neg-homeo} is presented in Section~\ref{sect:homeo-neg}, where we 
establish a partial analogue of Theorem~\ref{thm:main:neg-cts} for homeomorphism groups, and show that a stronger,
more general version of Theorem~\ref{thm:main:neg-homeo} holds.
Notably, the spaces satisfying the conditions of Theorem~\ref{thm:main:neg-homeo} are not long. 
For certain long spaces, the following analogue to Theorem~\ref{thm:main:cts} holds.

For a Tychonoff space $X$, we put $\homeo_{cpt}(X)$  for the set of compactly supported 
homeomorphisms of $X$. Every compactly supported homeomorphism on $X$ can be extended to
a homeomorphism of the Stone-\v{C}ech compactification $\beta X$ of $X$ that is the 
identity on the remainder $\beta X \backslash X$.
 Thus,  the group $\homeo_{cpt}(X)$ of compactly supported homeomorphisms of $X$ is a subgroup of $\homeo(\beta X)$, and we may equip 
$\homeo_{cpt}(X)$  with the compact-open topology induced by $\beta X$. Then $\homeo_K(X)=\homeo_K(\beta X)$ is a topological subgroup of $\homeo_{cpt}(X)$ for every $K \in \mathscr{K}(X)$.

\begin{definition}
A Tychonoff space $X$ has the {\itshape Compactly Supported Homeomorphism Property} ({\itshape CSHP}) if 
\begin{align}
\colim\limits_{K \in \mathscr{K}(X)} \homeo_K(X) = \homeo_{cpt}(X),
\end{align}
where the left-hand side is equipped with the colimit space topology. 
\end{definition}

Clearly, if $X$ has CSHP, then $\{\homeo_K(X)\}_{K\in \mathscr{K}(X)}$ satisfies ACP; however, the converse is false. For example, if $X$ is an infinite discrete set, then each $\homeo_K(X)$ is finite, and so $\{\homeo_K(X)\}_{K\in \mathscr{K}(X)}$ satisfies ACP, but $X$ does not have CSHP (see Corollary~\ref{cshp:cor:discrete}).

\begin{Ltheorem} \label{thm:main:homeo}
Suppose that
\begin{enumerate}

\item
$X=\mathbb{L}_{\geq 0}$; or

\item
$X=\mathbb{L}$; or

\item
$X = \kappa^n \times \lambda_1 \times \cdots \times \lambda_j$, where $\kappa$ is an uncountable regular cardinal,
$\lambda_1,\ldots,\lambda_j$ are successor ordinals smaller than $\kappa$, and $n,j \in \mathbb{N}$.

\end{enumerate}

\noindent
Then $X$ has CSHP.
\end{Ltheorem}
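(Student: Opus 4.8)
The goal is to show that for each of the three spaces $X$ the colimit space topology $\mathscr{T}$ on $\homeo_{cpt}(X)=\bigcup_{K\in\mathscr{K}(X)}\homeo_K(X)$ agrees with the compact-open topology $\mathscr{C}$ it inherits from $\homeo(\beta X)$. Since each inclusion $\homeo_K(X)\hookrightarrow(\homeo_{cpt}(X),\mathscr{C})$ is continuous and $\mathscr{C}$ restricts to the given topology on each level, one always has $\mathscr{C}\subseteq\mathscr{T}$; thus CSHP amounts to the reverse inclusion $\mathscr{T}\subseteq\mathscr{C}$. My plan is to deduce this from a criterion of Section~\ref{sect:cofin}: if $(\homeo_{cpt}(X),\mathscr{C})$ is a $k$-space in which every compact subset is contained in a single level $\homeo_K(X)$, and each level is $\mathscr{C}$-closed and carries the correct subspace topology, then $\mathscr{T}=\mathscr{C}$. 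The two substantive hypotheses are (i) the \emph{capture of compacta}: every $\mathscr{C}$-compact $\mathcal H\subseteq\homeo_{cpt}(X)$ lies in some $\homeo_K(X)$; and (ii) that $\mathscr{C}$, and each level, is a $k$-space. The $\mathscr{C}$-closedness of $\homeo_K(X)=\{h:\operatorname{cl}_X\{x\mid h(x)\neq x\}\subseteq K\}$ and the agreement of subspace topologies are routine.

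The engine behind (i) is the longness of $X$: all three spaces are long, so $\mathscr{K}(X)$ has uncountable regular cofinality and every $\sigma$-compact subset of $X$ has compact closure. Consequently, for any countable subfamily $\{h_m\}\subseteq\homeo_{cpt}(X)$ the union of the supports is $\sigma$-compact, hence has compact closure $K$, so $\{h_m\}\subseteq\homeo_K(X)$. To upgrade this from countable to compact sets I would argue contrapositively: if a compact $\mathcal H$ were contained in no level, its supports would be unbounded in $\mathscr{K}(X)$, and I would extract a countable subfamily whose supports strictly increase in ``height'', then construct separating compact-open neighbourhoods using points of $X$ at the successive heights witnessing the growth, exhibiting an infinite closed discrete subset of $\mathcal H$ and contradicting compactness. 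Crucially, this only requires producing separating open sets, not a countable neighbourhood base, so it does not rely on first countability of $\mathscr{C}$.

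For $X=\mathbb{L}_{\geq 0}$ and $X=\mathbb{L}$ every compact subset is contained in a compact arc homeomorphic to $[0,1]$, hence metrizable; therefore each $\homeo_K(X)$, being a subgroup of $\homeo(K)$ with the compact-open topology, is metrizable and in particular a $k$-space, and the height of a compactly supported homeomorphism is measured by a single ordinal below $\omega_1$. This makes both the capture argument above and the $k$-space property transparent, and $\mathscr{T}=\mathscr{C}$ follows from the criterion. This is the homeomorphism counterpart of the metrizable-level mechanism underlying Theorem~\ref{thm:main:cts}.

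The hard case is $X=\kappa^n\times\lambda_1\times\cdots\times\lambda_j$, and it is where I expect essentially all the difficulty to lie. Once $\kappa\geq\omega_2$, compact subsets of $X$ need not be metrizable (they may contain a copy of $\omega_1+1$), so the levels $\homeo_K(X)$ fail to be first countable and the sequential arguments of cases (1) and (2) are unavailable. I would exploit the zero-dimensionality and self-similarity of the ordinal factors: every compact $K$ sits inside a \emph{clopen} box $B=[0,\alpha]^n\times\lambda_1\times\cdots\times\lambda_j$ with $\alpha<\kappa$, and since each coordinate tail $[\alpha+1,\kappa)$ is again order-homeomorphic to $\kappa$, these boxes are cofinal and self-similar, with $\homeo_B(X)\cong\homeo(B)$. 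The plan is to run an induction on $n+j$ together with the ordinal heights, reducing the capture lemma and the $k$-space property for $\homeo_{cpt}(X)$ to the corresponding statements for the homeomorphism groups $\homeo(B)$ of the compact ordinal products, with the regularity of $\kappa$ guaranteeing that countably many coordinatewise heights remain bounded below $\kappa$. The principal obstacle, and the technical heart of the theorem, is establishing the capture of compacta and the $k$-space property for these non-first-countable levels: one must build the separating neighbourhoods and verify compact generation from the scattered, multidimensional ordinal structure rather than from a metric.
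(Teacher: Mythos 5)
Your proposal does not follow the paper's route, and it has a genuine gap at its core: the criterion you propose is circular in the setting where you apply it. You reduce CSHP to (i) capture of compacta (every $\mathscr{C}$-compact subset of $\homeo_{cpt}(X)$ lies in some level $\homeo_K(X)$) and (ii) $(\homeo_{cpt}(X),\mathscr{C})$ being a $k$-space. But once (i) holds and each level is a $\mathscr{C}$-closed subspace that is metrizable (hence a $k$-space), the $k$-ification of $\mathscr{C}$ is \emph{exactly} the colimit space topology $\mathscr{T}$: a set meets every $\mathscr{C}$-compact set in a relatively open set if and only if it meets every level in a relatively open set. Hence hypothesis (ii) is verbatim the assertion $\mathscr{C}=\mathscr{T}$, i.e., CSHP itself, and no independent argument for it is offered: metrizability of the levels does not transfer to the union (which is neither first countable nor locally compact), so calling (ii) ``transparent'' in cases (a) and (b) is unjustified, and in case (c) you explicitly leave it open. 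Moreover, your sketch of (i) is internally inconsistent: as you yourself note, longness of $X$ forces \emph{every} countable subfamily of $\homeo_{cpt}(X)$ into a single level, so a countable subfamily with ``strictly increasing supports'' can never witness failure of capture; inside one level such a sequence can even converge (bump homeomorphisms whose displacements shrink converge uniformly to the identity), so no infinite closed discrete set arises. Any honest argument would have to deal with uncountable families and cluster points of long nets --- and, tellingly, the paper never needs capture of compacta at all.

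The paper avoids both difficulties by replacing compact sets with \emph{countable} sets, which longness captures for free: Proposition~\ref{cofin:prop:admissible} says that an admissible topology of countable tightness must equal $\mathscr{T}$, so for (a) and (b) it suffices to prove that $\homeo_{cpt}(\mathbb{L})$ is countably tight in the compact-open topology (Proposition~\ref{cshp:prop:longlineray}, a concrete argument using uniform convergence on metrizable initial segments), plus a reconciliation of the compact-open topologies induced by $X$ and by $\beta X$ (via Arens' theorem and the identification of $\beta\mathbb{L}$), which your proposal never addresses. For case (c) --- where you concede the heart of the matter is missing --- the paper's mechanism is combinatorial and entirely absent from your plan: reduction to $\kappa^n$ by clopen inheritance (Lemma~\ref{cshp:lemma:clopen}, Corollary~\ref{cshp:cor:clopen}); the base of clopen-partition stabilizers $U(\mathcal{A})$ for zero-dimensional spaces (Lemma~\ref{cshp:lemma:0dim}); refinement of clopen partitions by grid partitions (Theorem~\ref{cshp:thm:grid}); stabilization of the grid sizes using longness (Proposition~\ref{cshp:prop:stabilize}); and finally Fodor's Pressing Down Lemma (Theorem~\ref{cshp:thm:Fodor}) to freeze the grid sets themselves along a stationary set of diagonal levels, which yields a single neighborhood $U\bigl(\mathcal{G}_{\{F_i\}_{i=1}^n}^{X}\bigr)$ inside a given colimit-open set. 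Tightness plays no role there (the paper even gives an example showing $\homeo_K(\kappa^n)$ need not be countably tight), so neither the paper's tightness route nor your $k$-space route can work for (c); the stationary-set combinatorics is what carries that case.
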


 The proof of Theorem~\ref{thm:main:homeo} is presented in Section~\ref{sect:cshp}, where we study  CSHP. 

Finally, we turn to posing some open problems.

The directed limit of $T_1$-spaces or normal spaces is again $T_1$ or normal, respectively; however, the directed limit of Hausdorff or Tychonoff spaces need not be Hausdorff or Tychonoff (\cite{Herrlich}).

\begin{problem}
Let $\{G_\alpha\}_{\alpha \in \mathbb{I}}$ be a long directed family of topological groups with closed embeddings as bonding maps. 

\begin{enumerate}

\item
Is the colimit group topology $\mathcal{A}$ Hausdorff? 

\item
Is the colimit space topology $\mathcal{T}$ Hausdorff?

\end{enumerate}
\end{problem}

There is a small gap between the negative statement established in Theorem~\ref{thm:main:neg-cts} and the positive one in Theorem~\ref{thm:main:cts}. There are pseudocompact spaces that are not long (\cite{Nyikos2007}).

\begin{problem}
Let $X$ be a pseudocompact Tychonoff space that is not long. Does $\{C_K(X,\mathbb{R}\}_{K\in\mathcal{K}(X)}$ satisfy ACP?
\end{problem}

There is a big gap between the negative statement established in Theorem~\ref{thm:main:neg-homeo} (or its generalization in Section~\ref{sect:homeo-neg}) and the positive one in Theorem~\ref{thm:main:homeo}. There are many long locally compact Tychonoff spaces that are not compact, such as finite powers of the long line or the long ray, but it is not known whether they satisfy CSHP.

\begin{problem}
Characterize the long, Tychonoff, non-compact spaces $X$ that satisfy CSHP.
\end{problem}

\section{Long families and tightness}
\label{sect:cofin}

A directed family of topological spaces $\{X_\alpha\}_{\alpha \in \mathbb{I}}$ 
is called {\em strict} if every bonding map $X_\alpha \rightarrow X_\beta$ is an embedding.
A topology $\mathscr{S}$ on the set-theoretic union $X=\bigcup\limits_{\alpha \in \mathbb{I}} X_\alpha$ is {\em admissible} if it makes each inclusion $X_\alpha \rightarrow (X,\mathscr{S})$ an embedding. If there is an admissible topology on $X$, then the family is strict. Furthermore, it can easily be seen that if the bonding maps are {\itshape closed} (or {\itshape open}) embeddings, the colimit space topology
\begin{align}
\mathscr{T}:=\{U\subseteq X \mid U\cap X_\alpha \text{ is open in } X_\alpha \text{ for all } \alpha\in\mathbb{I}\}
\end{align}
of a family of spaces is admissible.

In this section, we provide a sufficient condition for the colimit space  topology to be the only admissible one (Proposition~\ref{cofin:prop:admissible}), and use it to show that ACP holds for many long families of topological groups. 

Recall that the \emph{tightness} $t(X,\mathscr{S})$ of a topological space $(X,\mathscr{S})$ is the smallest cardinal $\kappa$ such that every point $p$ in the closure of a subset $A\subseteq X$ is in the closure of a subset $C\subseteq A$ with $|C| \leq \kappa$. One says that a space is \emph{$\kappa$-tight} [\emph{countably tight}] if $t(X,\mathscr{S})\leq\kappa$ [if $t(X,\mathscr{S})=\aleph_0$]. For ease of reference, we start
off with a well-known characterization of the tightness.

\begin{lemma} \label{cofin:lemma:tight}
Let $X$ be a topological space and $\kappa$ be a cardinal. The following statements are equivalent:

\begin{enumerate}[label={\rm (\roman*)}]
    \item 
    $t(X) \leq \kappa$;
    
    \item
    $X = \colim\limits_{C\leq X, |C|\leq \kappa} C$ in $\mathsf{Top}$, that is, 
    for every topological space $Y$ and $f: X\rightarrow Y$, if $f_{|C}$ is continuous for every subset $C\subseteq X$ with $|C|\leq\kappa$, then $f$ is continuous on $X$.
    
 \end{enumerate}

\end{lemma}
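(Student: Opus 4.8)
The plan is to prove the two implications separately, after reformulating (ii) as the usual universal property of a colimit: $X$ is the colimit of its subspaces of size $\le\kappa$ precisely when a map $f\colon X\to Y$ into an arbitrary space is continuous as soon as every restriction $f|_C$ (with $C\subseteq X$, $|C|\le\kappa$, carrying the subspace topology) is continuous. Throughout I would use the standard characterization that $f$ is continuous if and only if $f(\overline A)\subseteq\overline{f(A)}$ for every $A\subseteq X$, which converts both directions into statements about closures and meshes well with the definition of tightness. I also assume the usual convention that $\kappa\ge\aleph_0$, so that $\kappa\cdot\kappa=\kappa$.

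For $(i)\Rightarrow(ii)$, assume $t(X)\le\kappa$ and let $f\colon X\to Y$ restrict continuously to every $C$ with $|C|\le\kappa$. To verify $f(\overline A)\subseteq\overline{f(A)}$, fix $A\subseteq X$ and $p\in\overline A$. By tightness there is $C\subseteq A$ with $|C|\le\kappa$ and $p\in\overline C$. Put $C'=C\cup\{p\}$, which still has size $\le\kappa$; then $f|_{C'}$ is continuous and $p$ lies in the closure of $C$ within $C'$, so $f(p)\in\overline{f(C)}\subseteq\overline{f(A)}$. Hence $f$ is continuous.

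For $(ii)\Rightarrow(i)$ I would argue contrapositively. If $t(X)>\kappa$, choose $A\subseteq X$ and $p\in\overline A$ such that $p\notin\overline C$ for every $C\subseteq A$ with $|C|\le\kappa$. Set $B=\bigcup\{\overline C : C\subseteq A,\ |C|\le\kappa\}$. Since singletons witness $A\subseteq B$, we have $p\in\overline A\subseteq\overline B$, while $p\notin B$ by the choice of $A,p$; thus $B$ is not closed. Now take $Y$ to be the Sierpi\'nski space and let $f=\chi_B$, with the topology on $Y$ chosen so that continuity of $f$ is equivalent to $B$ being closed; then $f$ is not continuous. The key claim is that $f|_C$ is nonetheless continuous for every $C$ with $|C|\le\kappa$, i.e. that $B\cap C$ is closed in $C$. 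This would exhibit a family of continuous restrictions with discontinuous amalgamation, contradicting (ii).

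The main work — and the only genuinely delicate point — lies in that key claim. Given $x\in C$ with $x\in\overline{B\cap C}$, each $b\in B\cap C$ satisfies $b\in\overline{C_b}$ for some $C_b\subseteq A$ with $|C_b|\le\kappa$; setting $D=\bigcup_{b\in B\cap C}C_b$ one has $|D|\le|C|\cdot\kappa\le\kappa$ and $B\cap C\subseteq\overline D$, whence $x\in\overline{\overline D}=\overline D$ and so $x\in B$. This is exactly where the cardinal arithmetic $\kappa\cdot\kappa=\kappa$ (hence the convention $\kappa\ge\aleph_0$) is used, and where the collapsing of iterated closures must be handled carefully. I expect the bookkeeping in this step, rather than either implication as a whole, to be the principal obstacle.
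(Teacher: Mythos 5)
Your proof is correct, and while the implication (i) $\Rightarrow$ (ii) is essentially word-for-word the paper's argument (pass to $D=C\cup\{p\}$ and use $f(\operatorname{cl}_D C)\subseteq\operatorname{cl}f(C)\subseteq\operatorname{cl}f(A)$), your treatment of (ii) $\Rightarrow$ (i) takes a genuinely different route. The paper argues directly and globally: it sets $\operatorname{cl}^\kappa(A):=\bigcup\{\operatorname{cl}C \mid C\subseteq A,\ |C|\le\kappa\}$ for \emph{every} $A\subseteq X$, checks that $\operatorname{cl}^\kappa$ is a Kuratowski closure operator, hence generates a finer topology $\mathscr{T}_\kappa$, and then applies (ii) to the identity map $(X,\mathscr{T})\rightarrow(X,\mathscr{T}_\kappa)$, whose restrictions to sets of size $\le\kappa$ are continuous; this forces $\mathscr{T}=\mathscr{T}_\kappa$ and hence $\operatorname{cl}=\operatorname{cl}^\kappa$. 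You instead argue contrapositively and locally: you fix a single witness pair $A$, $p$ to the failure of $\kappa$-tightness, form the one set $B=\operatorname{cl}^\kappa(A)$, observe it is not closed, and test (ii) against the characteristic function $\chi_B$ into Sierpi\'nski space. What your version buys is economy: you never verify the closure-operator axioms, only that $B\cap C$ is closed in $C$ whenever $|C|\le\kappa$, and the target space is as simple as possible. What the paper's version buys is a structural by-product: it identifies the colimit in (ii) as the $\kappa$-modification $(X,\mathscr{T}_\kappa)$, rather than merely refuting continuity of one test map. The genuinely delicate step is the same in both proofs, namely collapsing an iterated closure via $|C|\cdot\kappa\le\kappa$: in the paper it is hidden in the clause ``it is easily shown that $\operatorname{cl}^\kappa$ is a Kuratowski closure operator'' (idempotence is exactly this computation), while in yours it is the explicitly stated key claim. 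Your standing convention $\kappa\ge\aleph_0$ is needed by the paper's proof for the same reason, so invoking it is not a defect of your argument.
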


\begin{proof}
(i) $\Rightarrow$ (ii): Let $f\colon X\rightarrow Y$ be a map such that $f_{|C}$ is continuous for every
subset $C\subseteq X$ with $|C|\leq \kappa$. To show that $f$ is continuous, let $A\subseteq X$ and
$x \in \operatorname{cl} A$. Since $t(X) \leq \kappa$, there is $C\subseteq A$ with $|C|\leq \kappa$ such that $x \in \operatorname{cl} C$. Put $D=C \cup \{x\}$. By our assumption, $f_{|D}$ is continuous, and so
\begin{align}
    f(x) \in f(\operatorname{cl}_D C) \subseteq \operatorname{cl} f(C) \subseteq \operatorname{cl} f(A).
\end{align}
Thus, $f(\operatorname{cl} A) \subseteq \operatorname{cl} f(A)$, as desired.

(ii) $\Rightarrow$ (i): For $A\subseteq X$, put 
$\operatorname{cl}^\kappa (A):= \bigcup \{ \operatorname{cl} C \mid C\subseteq A, |C| \leq \kappa\}$.    
It is easily shown that  $\operatorname{cl}^\kappa$ is a Kuratowski closure operator, and as such
it defines a topology $\mathscr{T}_\kappa$ on $X$ (\cite[1.2.7]{Engel}). The topology $\mathscr{T}_\kappa$ is finer than the original topology $\mathscr{T}$ of $X$, and by (ii), the identity map
$(X,\mathscr{T})\rightarrow (X,\mathscr{T}_\kappa)$ is continuous. Therefore, $\mathscr{T}=\mathscr{T}_\kappa$, and hence $\operatorname{cl}A = \operatorname{cl}^\kappa A$ for 
every $A \subseteq X$.
\end{proof}

\begin{definition}
Given a cardinal $\kappa$, a directed set $(\mathbb{I},\leq)$ is {\em $\kappa$-long}  if every subset $C\subseteq \mathbb{I}$ with $|C| \leq \kappa$ has an upper bound in $\mathbb{I}$. One says that $\mathbb{I}$ is {\em long} if it is $\aleph_0$-long. A directed family of spaces or groups is $\kappa$-long [long] if it is indexed by a $\kappa$-long [long] set.
\end{definition}

A totally ordered set is long if and only if its cofinality is not $\omega$; however, this
equivalence fails for ordered sets that are not totally ordered. For example, 
$\omega \times \omega_1$ with the product order
has cofinality $\omega_1$, but it is not long, because $\omega \times \{0\}$ has no upper bound.

\begin{proposition} \label{cofin:prop:admissible}
Let $\{X_\alpha\}_{\alpha \in \mathbb{I}}$ be a $\kappa$-long family of topological spaces. For an admissible topology $\mathscr{S}$  on $X=\bigcup\limits_{\alpha \in \mathbb{I}} X_\alpha$, the following are equivalent:

\begin{enumerate}[label={\rm (\roman*)}]

\item
$t(X,\mathscr{S}) \leq \kappa$;

\item
$\mathscr{S}=\mathscr{T}$ (the colimit space topology), and $t(X_\alpha,\mathscr{S}) \leq \kappa$ for all $\alpha \in \mathbb{I}$.
\end{enumerate}

\end{proposition}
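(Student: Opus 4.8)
The plan is to run both implications through the colimit characterization of tightness in Lemma~\ref{cofin:lemma:tight}, using the $\kappa$-long structure of $\mathbb I$ as the one genuinely non-formal ingredient. Before splitting into cases I would record two standing observations. First, since $\mathscr T$ is the \emph{finest} topology making every inclusion $X_\alpha \to X$ continuous, and since an admissible $\mathscr S$ makes each inclusion an embedding (hence continuous), one always has $\mathscr S \subseteq \mathscr T$; consequently proving $\mathscr S = \mathscr T$ is the same as proving that $\operatorname{id}\colon (X,\mathscr S) \to (X,\mathscr T)$ is continuous. Second, tightness is hereditary under subspaces (if $p \in \operatorname{cl}_A C$ with $p \in A$ then $p \in \operatorname{cl}_X C$, and conversely the witnessing set stays inside $A$), so because each $X_\alpha$ is embedded in $(X,\mathscr S)$ we get $t(X_\alpha,\mathscr S) \le t(X,\mathscr S)$ for free. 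This already disposes of the tightness clause in (ii) once (i) is assumed.

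For (i) $\Rightarrow$ (ii), I would apply Lemma~\ref{cofin:lemma:tight} to the identity $f = \operatorname{id}\colon (X,\mathscr S) \to (X,\mathscr T)$: since $t(X,\mathscr S) \le \kappa$, it suffices to check that $f|_C$ is continuous for every $C \subseteq X$ with $|C| \le \kappa$. This is precisely where $\kappa$-longness does the work. For each $c \in C$ pick an index $\alpha_c$ with $c \in X_{\alpha_c}$; the set $\{\alpha_c \mid c \in C\}$ has cardinality at most $\kappa$, hence an upper bound $\beta \in \mathbb I$, and therefore $C \subseteq X_\beta$. The restriction $f|_C$ then factors as $C \hookrightarrow X_\beta \hookrightarrow (X,\mathscr T)$, where the subspace topology on $X_\beta$ inherited from $\mathscr S$ is its own topology by admissibility, and the inclusion $X_\beta \to (X,\mathscr T)$ is continuous by the very definition of the colimit topology. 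Hence each $f|_C$ is continuous, and the Lemma yields $\mathscr T \subseteq \mathscr S$, so $\mathscr S = \mathscr T$.

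For (ii) $\Rightarrow$ (i), I would invoke Lemma~\ref{cofin:lemma:tight} in the forward direction to establish $t(X,\mathscr S) \le \kappa$: it is enough to show that any $f\colon (X,\mathscr S) \to Y$ whose restriction to every $\kappa$-small subset is continuous is itself continuous. Because $\mathscr S = \mathscr T$ is the colimit topology, continuity of $f$ is equivalent to continuity of each $f|_{X_\alpha}$. For a fixed $\alpha$, the hypothesis $t(X_\alpha,\mathscr S) \le \kappa$ lets me apply the Lemma \emph{inside} $X_\alpha$, reducing continuity of $f|_{X_\alpha}$ to continuity of $f|_C$ for $C \subseteq X_\alpha$ with $|C| \le \kappa$ — and these are continuous by assumption. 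Thus $f|_{X_\alpha}$ is continuous for every $\alpha$, so $f$ is continuous, giving $t(X,\mathscr S) \le \kappa$.

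The main obstacle is the absorption step in (i) $\Rightarrow$ (ii): the whole argument rests on converting ``$\kappa$-long'' into the statement that every subset of $X$ of size at most $\kappa$ lies in a single member $X_\beta$, and on verifying through admissibility that the subspace topologies match so that the factorization through $X_\beta$ is legitimate. Everything else is bookkeeping between the two equivalent formulations of tightness.
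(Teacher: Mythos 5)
Your proposal is correct and takes essentially the same route as the paper's proof: for (i)\,$\Rightarrow$\,(ii) both arguments use $\kappa$-longness to absorb each $\kappa$-small set $C$ into a single $X_\beta$, use admissibility to identify the subspace topologies, and conclude via Lemma~\ref{cofin:lemma:tight} that $\operatorname{id}\colon (X,\mathscr{S})\to(X,\mathscr{T})$ is continuous, with the tightness clause in (ii) following from heredity of tightness. Your (ii)\,$\Rightarrow$\,(i) is exactly the unpacking of the paper's one-line appeal to the fact that a colimit of $\kappa$-tight spaces is $\kappa$-tight, which the paper also derives from Lemma~\ref{cofin:lemma:tight}.
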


\begin{proof}
(i) $\Rightarrow$ (ii): One has $\mathscr{S}\subseteq \mathscr{T}$, because the embeddings $X_\alpha \rightarrow (X,\mathscr{S})$ induce a continuous map $(X,\mathscr{T}) \rightarrow (X,\mathscr{S})$. For the reverse inclusion, we  show that
$\iota\colon (X,\mathscr{S}) \rightarrow (X,\mathscr{T})$ is continuous. Let $C\subseteq X$ be such that $|C| \leq \kappa$. Since $\mathbb{I}$ is $\kappa$-long, there is $\alpha \in \mathbb{I}$ such that $C \subseteq X_\alpha$. The subspace topology induced by
$\mathscr{S}$ on $C$ coincides with the topology induced by $X_\alpha$, because $\mathscr{S}$ is admissible. Thus,
\begin{align}
\iota_{|C}\colon (C,\mathscr{S}_{|C}) \rightarrow X_\alpha \rightarrow (X,\mathscr{T})
\end{align}
is continuous. Therefore, $\iota$ is continuous, because $(X,\mathscr{S})$ is $\kappa$-tight. Hence, $\mathscr{S} =\mathscr{T}$. The second statement follows from $X_\alpha$ being a subspace of $(X,\mathscr{S})$.

(ii) $\Rightarrow$ (i): It follows from Lemma~\ref{cofin:lemma:tight} that the colimit of $\kappa$-tight spaces is $\kappa$-tight.
\end{proof}

\begin{corollary} \label{cofin:cor:product}
Let $\{G_\alpha\}_{\alpha \in \mathbb{I}}$ be a $\kappa$-long family of topological groups, and endow 
$G=\bigcup\limits_{\alpha \in \mathbb{I}} G_\alpha$ with the colimit space topology $\mathscr{T}$. If the product $G\times G$ is $\kappa$-tight, then $G$ is a topological group, and $\{G_\alpha\}_{\alpha \in \mathbb{I}}$ satisfies ACP.
\end{corollary}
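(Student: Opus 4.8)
The only thing that can fail here is joint continuity of the multiplication $m\colon (G,\mathscr{T}) \times (G,\mathscr{T}) \to (G,\mathscr{T})$: as recalled in the introduction, inversion is always continuous and $m$ is separately continuous, and $\{G_\alpha\}_{\alpha\in\mathbb{I}}$ satisfies ACP precisely when $m$ is jointly continuous, in which case $(G,\mathscr{T})$ is a topological group. So the plan is to prove that $m$ is continuous, and for this I would feed the hypothesis $t(G\times G)\le\kappa$ into Lemma~\ref{cofin:lemma:tight}. Applied to the space $X=G\times G$, that lemma says $m$ is continuous as soon as its restriction $m_{|C}$ to every subset $C\subseteq G\times G$ with $|C|\le\kappa$ is continuous. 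Thus the whole problem reduces to checking continuity of $m$ on ``small'' subsets.

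Next I would fix such a $C$ with $|C|\le\kappa$ and locate a single index absorbing it. The two coordinate projections $\pi_1(C)$ and $\pi_2(C)$ are subsets of $G$ of cardinality at most $\kappa$, so (taking $\kappa$ infinite, which is the standard convention for tightness) their union $\pi_1(C)\cup\pi_2(C)$ again has cardinality at most $\kappa$. Since the family is $\kappa$-long, there is a single $\alpha\in\mathbb{I}$ with $\pi_1(C)\cup\pi_2(C)\subseteq G_\alpha$, and hence $C\subseteq G_\alpha\times G_\alpha$. On $G_\alpha\times G_\alpha$ the restriction of $m$ coincides with the group multiplication $m_\alpha\colon G_\alpha\times G_\alpha\to G_\alpha$ followed by the inclusion $G_\alpha\hookrightarrow(G,\mathscr{T})$; the first map is continuous because $G_\alpha$ is a topological group, and the second is continuous by the definition of $\mathscr{T}$. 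So $m$ is continuous on $G_\alpha\times G_\alpha$, and therefore on the subspace $C$.

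The one point that needs care — and which I expect to be the crux of the argument — is that the subspace topology $C$ inherits from $G\times G$ agrees with the one it inherits from $G_\alpha\times G_\alpha$; only then does continuity of $m_{|C}$ viewed out of $G_\alpha\times G_\alpha$ yield continuity of $m_{|C}$ viewed out of $G\times G$. This is where admissibility of the colimit space topology enters: since the bonding maps are closed embeddings, each inclusion $G_\alpha\to(G,\mathscr{T})$ is an embedding, and being a product of embeddings, $G_\alpha\times G_\alpha\to G\times G$ is again an embedding. Consequently the two subspace topologies on $C$ coincide, so $m_{|C}$ is continuous for every such $C$. By Lemma~\ref{cofin:lemma:tight} this forces $m$ to be continuous on all of $G\times G$; hence $(G,\mathscr{T})$ is a topological group, and $\{G_\alpha\}_{\alpha\in\mathbb{I}}$ satisfies ACP.
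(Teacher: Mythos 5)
Your proof is correct and takes essentially the same route as the paper's: the paper simply applies Proposition~\ref{cofin:prop:admissible} to the family $\{G_\alpha\times G_\alpha\}_{\alpha\in\mathbb{I}}$ with the admissible topology $\mathscr{T}\times\mathscr{T}$, and your argument inlines the proof of that proposition — the tightness reduction of Lemma~\ref{cofin:lemma:tight}, absorption of a set of size $\leq\kappa$ into a single $G_\alpha\times G_\alpha$ by $\kappa$-longness, and the matching of subspace topologies via admissibility (closed embeddings). The only cosmetic difference is that you verify continuity of the multiplication directly on small sets rather than first identifying $(G\times G,\mathscr{T}\times\mathscr{T})$ as the colimit of $\{G_\alpha\times G_\alpha\}_{\alpha\in\mathbb{I}}$ and then invoking the universal property.
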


\begin{proof}
Put $X_\alpha = G_\alpha \times G_\alpha$ for every $\alpha \in \mathbb{I}$. Since $\mathscr{T}\times\mathscr{T}$ is admissible
on $G\times G = \bigcup\limits_{\alpha \in \mathbb{I}} X_\alpha$, by Proposition~\ref{cofin:prop:admissible}, 
$G\times G$ coincides with the colimit of the spaces $\{X_\alpha\}_{\alpha\in\mathbb{I}}$. Therefore, the multiplication
$G\times G \rightarrow G$ is continuous.
\end{proof}


The following construction provides a wealth of examples of countably tight spaces that need not be first countable. Let $\{(Y_j,0_j)\}_{j\in J}$ be a~family of spaces with base points. The {\itshape support} of $y =(y_j) \in \prod\limits_{j\in J} Y_j$ is $\operatorname{supp}(y):=\{ j \in J \mid y_j \neq 0_j\}$.  The {\itshape $\Sigma$-product} of the family is
\begin{align}
\sum\limits_{j \in J} (Y_j,0_j) := \Big\{y \in  \prod\limits_{j\in J} Y_j \mid \operatorname{supp}(y) \mbox{ is countable}\Big\}
\end{align}
equipped with the topology induced by the product topology.

\begin{ftheorem}[{\cite[6.16]{Arhangelskii}}] \label{cofin:thm:sigma}
Let $\{(Y_j,0_j)\}_{j\in J}$ be a family of first countable spaces with base points. Then $\sum\limits_{j \in J} (Y_j,0_j)$ is countably tight.
\end{ftheorem}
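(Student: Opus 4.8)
\noindent
The plan is to verify countable tightness directly from the definition: fixing $A \subseteq X$ and a point $p$ in the closure of $A$ in $X := \sum_{j\in J}(Y_j,0_j)$, I would produce a \emph{countable} set $C \subseteq A$ with $p$ in the closure of $C$. Three observations drive the argument. First, every element of $X$ has countable support, so off a countable set of coordinates it agrees with the base point. Second, a basic neighborhood in the product topology restricts only finitely many coordinates. Third, for any countable $S \subseteq J$ the partial product $\prod_{j\in S} Y_j$ is a countable product of first countable spaces, hence itself first countable, and the projection $\pi_S \colon X \to \prod_{j\in S}Y_j$ is continuous; in particular $\pi_S(p)$ has a countable neighborhood base.

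The construction would be recursive. Set $S_0 := \operatorname{supp}(p)$, which is countable. Given a countable set $S_n \subseteq J$, fix a countable neighborhood base $\{B^n_k \mid k\}$ at $\pi_{S_n}(p)$ in $\prod_{j\in S_n}Y_j$. Each preimage $\pi_{S_n}^{-1}(B^n_k)$ is an open neighborhood of $p$ in $X$ and therefore meets $A$; choose $a^n_k \in A \cap \pi_{S_n}^{-1}(B^n_k)$ and set $C_n := \{a^n_k \mid k\}$. Enlarge the coordinate set by $S_{n+1} := S_n \cup \bigcup_k \operatorname{supp}(a^n_k)$, which is again countable since it is a countable union of countable sets. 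Finally put $C := \bigcup_n C_n \subseteq A$ and $S := \bigcup_n S_n$, both countable. By construction $\operatorname{supp}(p) \subseteq S$ and $\operatorname{supp}(a) \subseteq S$ for every $a \in C$.

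It remains to check that $p$ lies in the closure of $C$. Let $V$ be a basic neighborhood of $p$, constraining finitely many coordinates $j_1,\dots,j_m$. For those $j_i \notin S$, both $p$ and every $a \in C$ take the base-point value $0_{j_i}$, so the constraint of $V$ at $j_i$ --- which $p$ satisfies --- is met automatically by all of $C$. The finitely many remaining active coordinates lie in $S$, hence in some single $S_n$. Restricting $V$ to the coordinates in $S_n$ yields a neighborhood of $\pi_{S_n}(p)$, which therefore contains some base member $B^n_k$; the associated point $a^n_k \in C$ satisfies $\pi_{S_n}(a^n_k) \in B^n_k$ and so meets every constraint of $V$ coming from $S_n$. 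Combined with the previous sentence, $a^n_k \in V \cap C$, so $V \cap C \neq \emptyset$. As $V$ was arbitrary, $p$ is in the closure of $C$, and $t(X) \le \aleph_0$.

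The main obstacle is the bookkeeping that reconciles the finitely many active coordinates of an arbitrary neighborhood with the infinitely many passive ones: the recursion must absorb the supports of all chosen points into the single countable set $S$, for only then do the off-$S$ coordinates become harmless (matching the base-point values of $p$) while any one neighborhood is resolved at a finite stage $S_n$ using first countability there. One could instead try to deduce the result from Proposition~\ref{cofin:prop:admissible} applied to the $\aleph_0$-long family of partial products $\{\,\{y \in X \mid \operatorname{supp}(y)\subseteq S\}\,\}_{S}$ indexed by countable $S \subseteq J$; but that route would require separately identifying the $\Sigma$-product topology with the colimit space topology of this family, which appears to be of the same difficulty, so the direct construction is preferable.
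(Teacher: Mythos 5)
Your proof is correct, but there is nothing in the paper to compare it against step by step: the paper does not prove this statement, it quotes it as a known fact with a citation to Arhangelskii and then uses it as a black box (in Corollaries~\ref{cofin:cor:sigma} and~\ref{cofin:cor:permutations}). What you have done is supply the standard closing-off argument behind that citation, and it checks out. The recursion is sound: each $\pi_{S_n}$ is continuous, so the sets $\pi_{S_n}^{-1}(B^n_k)$ are genuine open neighborhoods of $p$ in $X$ and therefore meet $A$; the sets $S_n$ are countable and increasing; and the final verification correctly splits the finitely many coordinates constrained by a basic neighborhood $V$ into those inside $S$ (all captured in a single $S_n$, where first countability of the countable subproduct produces a witness $a^n_k$) and those outside $S$ (where $p$ and every point of $C$ take the base-point value, so the constraint is automatic). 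This absorption of the supports of the chosen witnesses into $S_{n+1}$ is exactly the idea the result turns on. One remark on your closing comment: the alternative route you float --- deducing the theorem from Proposition~\ref{cofin:prop:admissible} applied to the family of partial products indexed by countable $S\subseteq J$ --- would be circular within this paper's architecture, since identifying the $\Sigma$-product topology with the colimit space topology of that family is precisely the content of Corollary~\ref{cofin:cor:sigma}, which the paper derives \emph{from} this theorem. So the direct construction you chose is not merely preferable; in the paper's logical order it is the only non-circular option.
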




\begin{corollary} \label{cofin:cor:sigma} 
Let $\{G_j\}_{j \in J}$ be a family of metrizable topological groups. Put $\mathbb{I}=[J]^{\leq \omega}$ ordered by inclusion, and for $D \in \mathbb{I}$, put $G_D = \prod\limits_{j \in D} G_j \times \prod\limits_{j \notin D} \{e_j\}$ with the product topology. Then $\{G_D\}_{D\in \mathbb{I}}$ satisfies ACP, and $\colim\limits_{D\in \mathbb{I}} G_D =\sum\limits_{j \in J} (G_j,e_j)$.
\end{corollary}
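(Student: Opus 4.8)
The plan is to recognize the $\Sigma$-product topology as an admissible, countably tight topology on the union, and then let Proposition~\ref{cofin:prop:admissible} and Corollary~\ref{cofin:cor:product} do all the work. First I would identify the underlying set. An element of $G_D$ is a point of $\prod_{j\in J}G_j$ whose support is contained in the countable set $D$, so $G=\bigcup_{D\in\mathbb I}G_D$ is exactly the set of points with countable support, i.e.\ the underlying set of $\sum_{j\in J}(G_j,e_j)$; indeed, any $y$ with countable support lies in $G_{\operatorname{supp}(y)}$. I would also record that $\mathbb I=[J]^{\leq\omega}$ is long, since a countable subfamily of countable subsets of $J$ has countable union, which is an upper bound in $\mathbb I$. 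Finally, because each $G_j$ is metrizable, hence Hausdorff, each $G_D$ sits inside $G_{D'}$ (for $D\subseteq D'$) as the closed subset cut out by the conditions $y_j=e_j$ for $j\in D'\setminus D$, so the bonding maps are closed embeddings and the family fits the framework.

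Next I would introduce the candidate topology $\mathscr S$, namely the topology that $G$ inherits as $\sum_{j\in J}(G_j,e_j)$, the subspace topology from the full product. The subspace topology that $\mathscr S$ induces on each $G_D$ is precisely the product topology on $\prod_{j\in D}G_j$, so $\mathscr S$ is admissible. Since each $G_j$ is metrizable, hence first countable, Theorem~\ref{cofin:thm:sigma} gives that the $\Sigma$-product is countably tight, i.e.\ $t(G,\mathscr S)\leq\aleph_0$. Applying implication (i)$\Rightarrow$(ii) of Proposition~\ref{cofin:prop:admissible} with $\kappa=\aleph_0$ to the long family $\{G_D\}$ and the admissible topology $\mathscr S$ then yields $\mathscr S=\mathscr T$; that is, the colimit space topology coincides with the $\Sigma$-product topology, which already gives the space-level identification $\colim_{D}G_D=\sum_{j\in J}(G_j,e_j)$.

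For ACP I would invoke Corollary~\ref{cofin:cor:product}, whose one remaining hypothesis is countable tightness of $G\times G$. The key observation is that a product of $\Sigma$-products is again a $\Sigma$-product: regrouping coordinates over the disjoint union $J\sqcup J$, a point $(x,y)$ has countable support exactly when both $x$ and $y$ do, so $\sum_{j\in J}(G_j,e_j)\times\sum_{j\in J}(G_j,e_j)\cong\sum_{j\in J\sqcup J}(G_j,e_j)$ as topological spaces. By Theorem~\ref{cofin:thm:sigma} this is countably tight, and since we have shown $(G,\mathscr T)=\Sigma$, the space $G\times G$ equipped with $\mathscr T\times\mathscr T$ is countably tight. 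Corollary~\ref{cofin:cor:product} then gives that $\{G_D\}$ satisfies ACP and that $(G,\mathscr T)$ is a topological group, upgrading the identification above to an isomorphism of topological groups.

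I expect the only genuinely delicate point to be ensuring that the product of two $\Sigma$-products is a $\Sigma$-product in the topological, not merely set-theoretic, sense, so that the second application of Arhangelskii's theorem is legitimate; the admissibility check and the long-ness of $[J]^{\leq\omega}$ are routine.
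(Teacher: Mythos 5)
Your proof is correct, and its core --- identifying $G$ set-theoretically with the $\Sigma$-product, checking that the $\Sigma$-product topology $\mathscr{S}$ is admissible, and combining Theorem~\ref{cofin:thm:sigma} with Proposition~\ref{cofin:prop:admissible} (using that $[J]^{\leq\omega}$ is long) to obtain $\mathscr{S}=\mathscr{T}$ --- is exactly the paper's argument. Where you diverge is the final step. The paper dispatches ACP in one line: since $\mathscr{T}=\mathscr{S}$ is the topology inherited from $\prod_{j\in J}G_j$, the space $(G,\mathscr{T})$ is a topological \emph{subgroup} of that product, hence a topological group, and therefore $\mathscr{T}$ coincides with the colimit group topology. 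You instead verify the hypothesis of Corollary~\ref{cofin:cor:product}, namely countable tightness of $G\times G$, via the regrouping identification $\sum_{j\in J}(G_j,e_j)\times\sum_{j\in J}(G_j,e_j)\cong\sum_{j\in J\sqcup J}(G_j,e_j)$ and a second application of Theorem~\ref{cofin:thm:sigma}. The step you flag as delicate is indeed fine: the canonical homeomorphism $\prod_{j\in J}G_j\times\prod_{j\in J}G_j\cong\prod_{j\in J\sqcup J}G_j$ carries a pair $(x,y)$ to a point whose support is $\operatorname{supp}(x)\sqcup\operatorname{supp}(y)$, so it restricts to a homeomorphism of the $\Sigma$-products. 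Both routes are sound; the paper's is shorter because it exploits the ambient group structure of the full product, whereas yours is the generic tightness argument (the same pattern the paper uses for Corollary~\ref{cofin:cor:Lipschitz}) and has the mild advantage of not needing any ambient topological group containing the colimit.
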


\begin{proof}
Put $G:=\bigcup\limits_{D\in \mathbb{I}} G_D \subseteq \prod\limits_{j\in J} G_j$, and let $\mathscr{S}$ denote the product topology on $G$. By Theorem~\ref{cofin:thm:sigma}, $(G,\mathscr{S})$ is countably tight, and it follows from the construction that $\mathscr{S}$ is admissible. Thus, by Proposition~\ref{cofin:prop:admissible}, $\mathscr{S}$ coincides with the colimit space topology  $\mathscr{T}$, because the indexing set $\mathbb{I}$ is long. Therefore, $(G,\mathscr{T})$ is a topological group, being a topological subgroup of $\prod\limits_{j \in J} G_j$. Hence, $\mathscr{T}$ coincides with the colimit group topology.
\end{proof}

\begin{example}
Observe that $\mathbb{I}$ cannot be replaced with the set of finite subsets of~$J$ in Corollary~\ref{cofin:cor:sigma}. Let $J=\mathbb{N}$, and let each $G_j$ be any metrizable group that is not locally compact. Then $G_D$ is metrizable but not locally compact for every non-empty finite $D\subseteq J$ and the bonding maps are not open, because the $G_j$ are not discrete. Thus, by Theorem~\ref{thm:Yamasaki}, the colimit space topology is not a~group topology.
\end{example}

Theorem~\ref{cofin:thm:sigma} has an interesting consequence for permutation groups too. For a set $J$, let $\operatorname{Sym}(J)$ denote the group of permutations of $J$ equipped with the pointwise topology induced by $J^J$, where $J$ is equipped with the discrete topology. It is well known that $\operatorname{Sym}(J)$  is a topological group. The {\itshape support} of a permutation $\sigma$ is the set
$\operatorname{supp}(\sigma)=\{x \in J \mid \sigma(x)\neq x\}$. Let $\operatorname{Sym}_\omega(J)$ denote the subgroup of $\operatorname{Sym}(J)$ consisting of permutations with countable support.

\begin{corollary} \label{cofin:cor:permutations}
Let $J$ be a set, put $\mathbb{I}=[J]^{\leq \omega}$ ordered by inclusion, and for $D \in \mathbb{I}$, put 
\begin{align}
G_D = \{\sigma \in \operatorname{Sym}(J) \mid \operatorname{supp}(\sigma) \subseteq D\}, 
\end{align}
equipped with the subgroup topology. Then $\{G_D\}_{D\in \mathbb{I}}$ satisfies ACP, 
and $\colim\limits_{D\in \mathbb{I}} G_D = \operatorname{Sym}_\omega(J)$. 
\end{corollary}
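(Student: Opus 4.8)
The plan is to mimic the proof of Corollary~\ref{cofin:cor:sigma}, realizing $\operatorname{Sym}_\omega(J)$ as a subspace of a suitable $\Sigma$-product and transferring countable tightness through it. First I would record that $\mathbb{I}=[J]^{\leq\omega}$ is long: the union of countably many countable subsets of $J$ is countable, hence belongs to $\mathbb{I}$ and is an upper bound. Next I would view each permutation $\sigma$ as the point $(\sigma(x))_{x\in J}\in J^J$, so that $\operatorname{Sym}(J)$ carries exactly the pointwise (product) topology and $\operatorname{Sym}_\omega(J)=\bigcup_{D\in\mathbb{I}}G_D$. Write $\mathscr{S}$ for the pointwise topology on $\operatorname{Sym}_\omega(J)$.

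The crucial step is to relate $\mathscr{S}$ to a $\Sigma$-product. Taking the basepoint of the $x$-th copy of the discrete space $J$ to be $0_x:=x$, the support of a point $(y_x)_{x\in J}$ in the sense of the $\Sigma$-product becomes $\{x\mid y_x\neq x\}$, which for a permutation $\sigma$ is precisely $\operatorname{supp}(\sigma)$. Hence $\operatorname{Sym}_\omega(J)$ is a subspace of $\sum_{x\in J}(J,x)$. Since $J$ is discrete and therefore first countable, Theorem~\ref{cofin:thm:sigma} shows that $\sum_{x\in J}(J,x)$ is countably tight. Because countable tightness passes to subspaces (if a point of the subspace lies in the closure of a set $A$, it lies in the ambient closure of some countable $C\subseteq A$, and that closure point remains in the subspace), the space $(\operatorname{Sym}_\omega(J),\mathscr{S})$ is countably tight.

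It then remains to verify admissibility and assemble the pieces. The inclusions $G_D\hookrightarrow\operatorname{Sym}_\omega(J)\hookrightarrow\operatorname{Sym}(J)$ are all embeddings for the pointwise topology, so the subspace topology induced by $\mathscr{S}$ on $G_D$ coincides with the given subgroup topology; thus $\mathscr{S}$ is admissible. Applying Proposition~\ref{cofin:prop:admissible} with $\kappa=\aleph_0$ (legitimate since $\mathbb{I}$ is long and $\mathscr{S}$ is admissible and countably tight) yields $\mathscr{S}=\mathscr{T}$, the colimit space topology. Finally, $\operatorname{Sym}_\omega(J)$ is a subgroup of $\operatorname{Sym}(J)$, as $\operatorname{supp}(\sigma\tau^{-1})\subseteq\operatorname{supp}(\sigma)\cup\operatorname{supp}(\tau)$ is countable; hence $(\operatorname{Sym}_\omega(J),\mathscr{S})$ is a topological group. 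Since $\mathscr{S}=\mathscr{T}$, the colimit space topology is a group topology, so $\{G_D\}_{D\in\mathbb{I}}$ satisfies ACP and $\colim_{D\in\mathbb{I}}G_D=\operatorname{Sym}_\omega(J)$.

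I expect the only genuinely delicate point to be the identification of $\operatorname{Sym}_\omega(J)$ as a \emph{subspace} of the $\Sigma$-product rather than the whole of it (not every countably supported map is a bijection), which forces one to invoke heredity of countable tightness to subspaces — a standard but essential addition relative to Corollary~\ref{cofin:cor:sigma}. Everything else is bookkeeping parallel to that corollary.
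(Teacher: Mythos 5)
Your proof is correct and follows essentially the same route as the paper's: identify $\operatorname{Sym}_\omega(J)$ with the set of permutations inside the $\Sigma$-product $\sum\limits_{x \in J} (J,x)$ (with basepoint $x$ in the $x$-th copy of $J$), deduce countable tightness from Theorem~\ref{cofin:thm:sigma}, check admissibility of the pointwise topology, and apply Proposition~\ref{cofin:prop:admissible} together with the fact that $\operatorname{Sym}_\omega(J)$ is a topological subgroup of $\operatorname{Sym}(J)$. The only difference is that you spell out the heredity of countable tightness to subspaces, a step the paper uses implicitly when passing from the $\Sigma$-product to $\operatorname{Sym}_\omega(J)=\operatorname{Sym}(J)\cap\sum\limits_{x \in J} (J,x)$.
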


\begin{proof}
Consider the family $\{(J,x)\}_{x\in J}$ of spaces with base points. The support of a permutation $\sigma\in\operatorname{Sym}(J)$ coincides with the $\operatorname{supp}(\sigma)$ of $\sigma$ as an element of $J^J$. Thus,
\begin{align}
\operatorname{Sym}_\omega(J) = \operatorname{Sym}(J) \cap \left(\sum\limits_{x \in J} (J,x)\right).
\end{align}
Therefore, by  Theorem~\ref{cofin:thm:sigma}, $\operatorname{Sym}_\omega(J)$ is countably tight. It follows from the construction that 
the topology of $\operatorname{Sym}_\omega(J)$ is admissible, and so, by Proposition~\ref{cofin:prop:admissible}, the topology of
$\operatorname{Sym}_\omega(J)$ coincides with the colimit space topology $\mathscr{T}$, because the indexing set $\mathbb{I}$ is long. Hence,
$(\operatorname{Sym}_\omega(J),\mathscr{T})$ is a topological group, and $\mathscr{T}$ coincides with the colimit group topology.
\end{proof}

We turn now to families of metrizable spaces and groups with well-behaved bonding maps.

\begin{theorem} \label{cofin:thm:Lipschitz}
Let $\{(X_\alpha,d_\alpha)\}_{\alpha \in \mathbb{I}}$ be a strict long family of metrizable spaces. If 
the bonding maps $(X_\alpha,d_\alpha)\rightarrow (X_\beta,d_\beta)$ are Lipschitz with a fixed constant $L>0$ (in particular, if they are isometries), then
\begin{align}
d(x,y):=\limsup\limits_{\alpha \in \mathbb{I}} d_\alpha(x,y)
= \inf\limits_{\alpha \in \mathbb{I}} \sup \{d_\beta(x,y) \mid x,y \in X_\beta \mbox{ and } \beta \geq \alpha\}
\end{align}
is a metric on $X=\bigcup\limits_{\alpha \in \mathbb{I}} X_\alpha$, and it induces the colimit space topology $\mathscr{T}$.
\end{theorem}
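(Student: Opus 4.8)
The plan is to verify directly that the formula defines a pseudometric, identify its topology with $\mathscr{T}$, and then upgrade ``pseudometric'' to ``metric'' using that $\mathscr{T}$ is $T_1$. First I would check that $d$ is well defined and finite. Given $x,y\in X$, the set $\{x,y\}$ is finite, hence countable, so by long-ness there is $\alpha$ with $x,y\in X_\alpha$; for every $\beta\ge\alpha$ both points lie in $X_\beta$ and the $L$-Lipschitz bonding map $X_\alpha\to X_\beta$ gives $d_\beta(x,y)\le L\,d_\alpha(x,y)$. Hence the net $\gamma\mapsto \sup\{d_\beta(x,y)\mid \beta\ge\gamma\}$, indexed by $\gamma\ge\alpha$, is non-increasing and bounded above by $L\,d_\alpha(x,y)$, so its infimum --- which is exactly $\limsup_\alpha d_\alpha(x,y)$ --- is a finite non-negative real. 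Symmetry is immediate from that of each $d_\beta$, and $d(x,x)=0$ since $d_\beta(x,x)=0$. For the triangle inequality I would fix $x,y,z$, choose (again by long-ness) a single $\alpha$ with $x,y,z\in X_\alpha$, apply the triangle inequality for each $d_\beta$ with $\beta\ge\alpha$, and pass to the limit using subadditivity of $\limsup$ over the net. Thus $d$ is a pseudometric; write $\mathscr{T}_d$ for its topology.

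Next I would show $\mathscr{T}_d\subseteq\mathscr{T}$. The inequality $d(x,y)\le L\,d_\alpha(x,y)$ for $x,y\in X_\alpha$ says precisely that each inclusion $(X_\alpha,d_\alpha)\to(X,d)$ is $L$-Lipschitz, hence continuous; since $\mathscr{T}$ is the finest topology making all inclusions continuous, $\mathscr{T}_d\subseteq\mathscr{T}$. (This is the only place where the uniformity of the Lipschitz constant is needed.)

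The hard part is the reverse inclusion $\mathscr{T}\subseteq\mathscr{T}_d$, where I expect long-ness and strictness to do the real work. Let $U\in\mathscr{T}$ and $x\in U$, and suppose toward a contradiction that no $d$-ball about $x$ lies in $U$; choose $x_n\notin U$ with $d(x,x_n)\to 0$. By long-ness the countable set $\{x\}\cup\{x_n\mid n\in\mathbb{N}\}$ lies in some $X_\alpha$. For each $n$, unwinding the infimum defining $d(x,x_n)$, I would pick $\gamma_n\ge\alpha$ with $\sup\{d_\beta(x,x_n)\mid\beta\ge\gamma_n\}\le d(x,x_n)+1/n$, a bound that tends to $0$. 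The crucial move is that $\{\gamma_n\mid n\in\mathbb{N}\}$ is again countable, so long-ness furnishes a single $\gamma^{*}\ge\alpha$ with $\gamma^{*}\ge\gamma_n$ for all $n$; then $d_{\gamma^{*}}(x,x_n)\le d(x,x_n)+1/n\to 0$, i.e.\ $x_n\to x$ in $X_{\gamma^{*}}$. Because the family is strict, the bonding map $X_\alpha\to X_{\gamma^{*}}$ is an embedding, and a sequence of points of $X_\alpha$ that converges in $X_{\gamma^{*}}$ to a point of $X_\alpha$ must converge in the subspace, hence in $(X_\alpha,d_\alpha)$. But $U\cap X_\alpha$ is a $d_\alpha$-open neighbourhood of $x$, so $x_n\in U$ eventually, contradicting $x_n\notin U$. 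This proves $U\in\mathscr{T}_d$, whence $\mathscr{T}=\mathscr{T}_d$.

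Finally I would promote the pseudometric to a metric. The colimit space topology is $T_1$: for any $p\in X$ and any $\alpha$, the set $\{p\}\cap X_\alpha$ is either empty or the closed singleton $\{p\}$ in the metrizable space $X_\alpha$, so $\{p\}$ is $\mathscr{T}$-closed. If $d(x,y)=0$ with $x\ne y$, then $y$ lies in every $d$-ball about $x$, hence in every $\mathscr{T}$-neighbourhood of $x$; yet $X\setminus\{y\}$ is such a neighbourhood, a contradiction. Therefore $d$ separates points and is a genuine metric inducing $\mathscr{T}$, as claimed. The single step I expect to be delicate is the convergence transfer in the third paragraph: passing from $d$-convergence to $d_\alpha$-convergence is impossible term-by-term (one only has the one-sided bound $d\le L\,d_\alpha$), and it is precisely the existence of the common upper bound $\gamma^{*}$ for the witnesses $\gamma_n$, together with the embedding property, that makes it go through.
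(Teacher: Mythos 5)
Your proof is correct, but it takes a genuinely different route from the paper's. The paper first shows, using Lemma~\ref{lemma:cofin:monotone} (a monotone real-valued function on a long directed set is eventually constant), that the infimum defining $d$ is actually attained: setting $\widetilde{d}_\gamma(x,y):=\sup_{\beta\geq\gamma}d_\beta(x,y)$, which satisfies $d_\gamma\leq\widetilde{d}_\gamma\leq L\,d_\gamma$, one gets that for every \emph{countable} $C\subseteq X$ there is $\beta(C)$ with $d=\widetilde{d}_\gamma$ on $C$ for all $\gamma\geq\beta(C)$. From this the paper concludes at once that $d$ is a metric and that each inclusion $X_\alpha\rightarrow(X,d)$ is an embedding (admissibility), and then the identification of the $d$-topology with $\mathscr{T}$ comes for free from the general Proposition~\ref{cofin:prop:admissible}, since metric spaces are countably tight. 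You instead verify the pseudometric axioms directly (via subadditivity of $\limsup$ rather than eventual constancy), prove the two topology inclusions by hand --- the nontrivial one by a contradiction argument in which long-ness dominates the countably many approximate witnesses $\gamma_n$ and strictness transfers convergence in $X_{\gamma^{*}}$ back into $X_\alpha$ --- and finally upgrade the pseudometric to a metric using $T_1$-ness of $\mathscr{T}$. In effect your third paragraph re-proves, specialized to sequences, exactly the tightness mechanism behind Proposition~\ref{cofin:prop:admissible}; this makes your argument self-contained and more elementary, at the cost of modularity (the paper reuses that proposition for $\Sigma$-products, permutation groups, and homeomorphism groups) and of information: you never establish the attainment $d(x,y)=\min_\gamma\widetilde{d}_\gamma(x,y)$ --- in the isometric case, that $d$ restricts to $d_\alpha$ on $X_\alpha$ --- which the paper's proof yields and later exploits (e.g., in Corollary~\ref{cofin:cor:Banach}, where the colimit norm is seen to restrict to the given norms). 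One small inaccuracy: the uniform constant $L$ is needed not only for $\mathscr{T}_d\subseteq\mathscr{T}$, as you claim parenthetically, but already for the finiteness of $d$ in your first paragraph; both uses are, of course, the same inequality $d_\beta\leq L\,d_\alpha$.
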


In order to prove Theorem~\ref{cofin:thm:Lipschitz}, we need a technical lemma.

\begin{lemma} \label{lemma:cofin:monotone}
Let $(\mathbb{I},\leq)$ be a long directed set and 
$f\colon (\mathbb{I},\leq) \rightarrow (\mathbb{R},\leq)$ a monotone function. Then $f$ is eventually constant, that is,
there is $\beta \in \mathbb{I}$ and $s\in \mathbb{R}$ such that $f(\gamma)=s$ for all $\gamma \geq \beta$.
\end{lemma}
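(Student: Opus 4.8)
The plan is to use the defining property of a long directed set---that countable subsets have upper bounds---in two places: first to bound $f$, and then to show its supremum is attained. Without loss of generality I would assume $f$ is non-decreasing; otherwise replace $f$ by $-f$, which is again a monotone real-valued function on $(\mathbb{I},\leq)$ and for which ``eventually constant'' means the same thing.

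First I would show that $f$ is bounded above. If it were not, then for each $n \in \mathbb{N}$ I could choose $\alpha_n \in \mathbb{I}$ with $f(\alpha_n) > n$. The set $\{\alpha_n \mid n \in \mathbb{N}\}$ is countable, so, since $\mathbb{I}$ is long, it has an upper bound $\beta \in \mathbb{I}$. Monotonicity then gives $f(\beta) \geq f(\alpha_n) > n$ for every $n$, which is impossible for a real number. Hence $s := \sup_{\alpha \in \mathbb{I}} f(\alpha)$ is a finite real number.

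Next I would realize the supremum at a single index. For each $n \in \mathbb{N}$, pick $\delta_n \in \mathbb{I}$ with $f(\delta_n) > s - \tfrac{1}{n}$. Again $\{\delta_n \mid n \in \mathbb{N}\}$ is countable, so the long property yields an upper bound $\beta^* \in \mathbb{I}$. By monotonicity $f(\beta^*) \geq f(\delta_n) > s - \tfrac{1}{n}$ for all $n$, whence $f(\beta^*) \geq s$; combined with $f(\beta^*) \leq s$ this forces $f(\beta^*) = s$. Finally, for any $\gamma \geq \beta^*$, monotonicity and the definition of $s$ squeeze $s = f(\beta^*) \leq f(\gamma) \leq s$, so $f(\gamma) = s$. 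Thus $f$ equals $s$ on the whole up-set of $\beta^*$, which is the desired conclusion.

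There is essentially no serious obstacle here; the only point requiring care is that both applications of the long property are genuinely needed---one to rule out $s = +\infty$, and one to produce a $\beta^*$ at which the supremum is achieved---and each uses precisely that a countably generated approximating family admits an upper bound. The reduction to the non-decreasing case and the concluding squeeze are routine.
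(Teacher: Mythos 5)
Your proof is correct and takes essentially the same approach as the paper: pick a countable family $\{\delta_n\}$ with $f(\delta_n) > s - \tfrac{1}{n}$, use the long property to find a common upper bound, and squeeze with monotonicity. The only cosmetic difference is the treatment of boundedness: the paper sidesteps your first step by replacing $f$ with $\arctan \circ f$ (or $\arctan \circ (-f)$), which is automatically bounded, whereas you rule out $s = +\infty$ by a second, equally valid, application of the long property.
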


\begin{proof}
By replacing $f$ with $\arctan \circ f$ or $\arctan \circ (- f)$, we may assume without loss of generality that $f$ is non-decreasing and bounded. Put $s = \sup f(\mathbb{I})$. For every $n \in \mathbb{N}\backslash\{0\}$, there is $\beta_n \in \mathbb{I}$ such that
$f(\beta_n) > s - \frac 1 n$. Since $\mathbb{I}$ is a long directed set, there is $\beta \in \mathbb{I}$ such that $\beta \geq \beta_n$ for every $n$.
By monotonicity, for every $\gamma \geq \beta$, 
one has $s \geq f(\gamma) \geq f(\beta_n) > s- \frac 1 n$, and thus $f(\gamma)=s$.
\end{proof}

\begin{proof}[Proof of Theorem~\ref{cofin:thm:Lipschitz}]
We show that: (1) $d$ is a metric; and (2) $d$ induces an admissible topology on $X$. By Proposition~\ref{cofin:prop:admissible}, it will
follow that $d$ induces the colimit space topology on $X$.

For each $\alpha \in \mathbb{I}$ and $x,y\in X_\alpha$, put
\begin{align}
\widetilde{d}_\alpha(x,y) = \sup\limits_{\beta \geq \alpha} d_\beta(x,y).
\end{align}
Then $d_\alpha \leq \widetilde{d}_\alpha \leq L d_\alpha$, and so $\widetilde{d}_\alpha$ is a metric on $X_\alpha$ that is (strongly) equivalent to $d_\alpha$. For $\beta \leq \gamma$ and $x,y\in X_\beta$, one has $\widetilde{d}_\beta(x,y) \geq 
\widetilde{d}_\gamma(x,y)$. Thus, by Lemma~\ref{lemma:cofin:monotone},
\begin{align}
d(x,y) = 
\min\limits_{\beta \in \mathbb{I}} \{\widetilde{d}_\beta(x,y) \mid  x,y \in X_\beta  \},
\end{align}
and for every $x,y\in X$, there is $\beta(x,y)\in \mathbb{I}$ such that $d(x,y) = \widetilde{d}_\gamma(x,y)$ for every
$\gamma \geq \beta(x,y)$. Since $\mathbb{I}$ is long, for every countable subset $C \subseteq X$ there is $\beta(C) \in \mathbb{I}$ such that $d(x,y) = \widetilde{d}_\gamma(x,y)$ for every $x,y\in C$ and  $\gamma \geq \beta(C)$. 
Therefore, $d$ is a metric, and on every countable set $C\subseteq X$, it induces the same topology as $d_\gamma$ for $\gamma$ large enough. Hence, $d$ is admissible, because the maps $X_\alpha \rightarrow X_\gamma$ are embeddings and metric spaces are countably tight.
\end{proof}


\begin{corollary} \label{cofin:cor:Lipschitz}
Let $\{(G_\alpha,d_\alpha)\}_{\alpha \in \mathbb{I}}$ be a strict long family of metrizable topological groups with bonding maps that
are Lipschitz with a fixed constant $L>0$ (in particular, isometries). Then $\{G_\alpha\}_{\alpha \in \mathbb{I}}$ satisfies ACP, and the colimit topology on $G=\bigcup\limits_{\alpha \in \mathbb{I}} G_\alpha$ is generated by the metric
\begin{align}
d(x,y):=\limsup\limits_{\alpha \in \mathbb{I}} d_\alpha(x,y)
= \inf\limits_{\alpha \in \mathbb{I}} \sup \{d_\beta(x,y) \mid x,y \in G_\beta \mbox{ and } \beta \geq \alpha\}.
\end{align}
\end{corollary}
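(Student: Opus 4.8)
The plan is to deduce Corollary~\ref{cofin:cor:Lipschitz} from Theorem~\ref{cofin:thm:Lipschitz} by showing that the metric $d$ constructed there is compatible with the group structure, so that $(G,d)$ becomes a topological group and hence the colimit space topology $\mathscr{T}$ is already a group topology (which forces $\mathscr{T}=\mathscr{A}$, i.e.\ ACP). First I would invoke Theorem~\ref{cofin:thm:Lipschitz} directly: the family $\{(G_\alpha,d_\alpha)\}_{\alpha\in\mathbb{I}}$ is a strict long family of metrizable spaces whose bonding maps are $L$-Lipschitz, so $d(x,y)=\limsup_{\alpha} d_\alpha(x,y)$ is a metric on $G=\bigcup_\alpha G_\alpha$ inducing the colimit space topology $\mathscr{T}$. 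This gives the metric formula in the statement for free; what remains is purely the group-theoretic content, namely that $(G,\mathscr{T})$ is a topological group.

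The key mechanism I would exploit is the ``local finiteness'' already extracted in the proof of Theorem~\ref{cofin:thm:Lipschitz}: for every countable $C\subseteq X$ there is $\beta(C)\in\mathbb{I}$ such that $d(x,y)=\widetilde{d}_\gamma(x,y)$ for all $x,y\in C$ and all $\gamma\geq\beta(C)$. To check joint continuity of multiplication and inversion at a point, I would restrict to a suitable countable set. Concretely, to verify continuity of $m\colon G\times G\to G$ at $(a,b)$, I would consider a sequence (or more carefully, use the countable-tightness argument underlying Proposition~\ref{cofin:prop:admissible} via Corollary~\ref{cofin:cor:product}) and reduce to a single index $\gamma$ large enough that $d$ agrees with $\widetilde{d}_\gamma$ on all the finitely/countably many points in play. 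Since each $\widetilde{d}_\gamma$ is strongly equivalent to $d_\gamma$ and $(G_\gamma,d_\gamma)$ is a topological group, multiplication and inversion are continuous for $\widetilde{d}_\gamma$ on $G_\gamma$; transporting this back through the eventual equality $d=\widetilde{d}_\gamma$ on the relevant countable set yields the continuity estimate for $d$.

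A cleaner route, which I would prefer to present, is to avoid re-proving joint continuity by hand and instead apply Corollary~\ref{cofin:cor:product}: since $\mathscr{T}$ is induced by a metric, $(G,\mathscr{T})$ is metrizable, hence first countable, hence countably tight, and a finite product of countably tight metrizable spaces is countably tight, so $G\times G$ is $\aleph_0$-tight. As $\mathbb{I}$ is long ($\aleph_0$-long), Corollary~\ref{cofin:cor:product} immediately gives that $G$ is a topological group and $\{G_\alpha\}_{\alpha\in\mathbb{I}}$ satisfies ACP. This reduces the entire corollary to verifying $\aleph_0$-tightness of $G\times G$, which follows from metrizability of $(G,\mathscr{T})$ established by Theorem~\ref{cofin:thm:Lipschitz}.

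The main obstacle, and the point requiring care, is confirming that $G\times G$ is genuinely $\aleph_0$-tight in the form demanded by Corollary~\ref{cofin:cor:product}: the product topology $\mathscr{T}\times\mathscr{T}$ on $G\times G$ must coincide with the metric topology induced by $(G,d)\times(G,d)$, and one must ensure the product of two metrizable spaces is metrizable (hence first countable and countably tight). This is standard, but it is the step where the hypotheses of Theorem~\ref{cofin:thm:Lipschitz} (strictness, longness, fixed Lipschitz constant) are silently doing all the work, so I would state explicitly that $\mathscr{T}$ is metrizable before invoking Corollary~\ref{cofin:cor:product}. Once metrizability is in hand, everything else is a one-line application of the already-established machinery.
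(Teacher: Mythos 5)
Your preferred route is precisely the paper's own proof: Theorem~\ref{cofin:thm:Lipschitz} shows $\mathscr{T}$ is generated by the metric $d$, so $\mathscr{T}\times\mathscr{T}$ is metrizable on $G\times G$ and hence countably tight, and Corollary~\ref{cofin:cor:product} then gives ACP. The hands-on verification of joint continuity you sketch first is unnecessary, as you yourself conclude, and your final paragraph's worry is resolved by the standard fact that the product of two metric topologies is the topology of the product metric.
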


\begin{proof}
By Theorem \ref{cofin:thm:Lipschitz}, the colimit space topology $\mathscr{T}$ on $G$ is generated by the metric $d$, and thus
$\mathscr{T}\times \mathscr{T}$ is metrizable on $G\times G$; in particular, it is countably tight. Therefore, by Corollary~\ref{cofin:cor:product}, the statement follows.
\end{proof}

\begin{example}
The requirement of having a uniform bound for the Lipschitz constants of the bonding cannot be omitted
in Theorem~\ref{cofin:thm:Lipschitz} and Corollary~\ref{cofin:cor:Lipschitz}. 
Let $M$ be a non-trivial metrizable topological group, fix a metric $d \leq 1$ on $M$, and put $G_\alpha:=M^\alpha=\prod\limits_{\gamma<\alpha} M$ for $\alpha < \omega_1$. Since $\omega_1$ is cofinal in $[\omega_1]^{\leq \omega}$, it follows from Corollary~\ref{cofin:cor:sigma} that  $\colim\limits_{\alpha < \omega_1} G_\alpha = \sum\limits_{\alpha < \omega_1} (M,e)$, and thus the colimit space topology is not metrizable. (In fact, it has an uncountable pseudocharacter, and so it cannot be first countable.) 

We construct by transfinite induction a metric $d_\alpha \leq 1$ on each $G_\alpha$ such that for every $\beta < \alpha$, the embedding $\iota_{\beta\alpha}\colon G_\beta \rightarrow G_\alpha$ and the projection $\pi_{\alpha\beta}\colon G_\alpha\rightarrow G_\beta$ are  Lipschitz. Suppose that such $\{d_\gamma\}_{\gamma <\alpha}$ have already been constructed. If $\alpha=\delta+1$ is a successor ordinal, we put
\begin{align}
d_\alpha(x,y):=\max \{d_\delta (x_{< \delta},y_{< \delta}),d(x_\delta,y_\delta)\}
\end{align}
for $x=(x_\gamma)_{\gamma <\alpha},y=(y_\gamma)_{\gamma <\alpha}  \in G_\alpha$. The embedding
$\iota_{\delta\alpha}$ is an isometry, and $\pi_{\alpha\delta}$ is Lipschitz with constant $1$. Thus, by the inductive hypothesis, the composites $\iota_{\beta\alpha} = \iota_{\beta\delta }\iota_{\delta\alpha}$
and $\pi_{\alpha\beta}=\pi_{\alpha\delta}\pi_{\delta\beta}$ are Lipschitz for every $\beta < \alpha$. If $\alpha$ is a limit ordinal, pick a strictly increasing sequence $\alpha_i < \alpha$ such that $\sup\alpha_i=\alpha$. For every $i< \omega$, pick $L_{i} \geq 1$ such that $\iota_{\alpha_n \alpha_i}$ is $L_{i}$-Lipschitz for every $n \leq i$. For $x=(x_\gamma)_{\gamma <\alpha},y=(y_\gamma)_{\gamma <\alpha}  \in G_\alpha$, put
\begin{align}
d_\alpha(x,y):= \sum\limits_{i <\omega} \tfrac{1}{L_i 2^{i+1}} d_{\alpha_i}(x_{< \alpha_i},y_{< \alpha_i}).
\end{align}
In what follows, for a Lipschitz map $f$, we put $\operatorname{Lip}(f)$ for the smallest
Lipschitz constant for $f$.
Let $n < \omega$. The projection $\pi_{\alpha \alpha_n}$ is Lipschitz, because
$d_{\alpha_n}(x_{< \alpha_n},y_{< \alpha_n}) \leq 2^{n+1} L_n d_\alpha(x,y)$. 
To see that the embedding $\iota_{\alpha_n \alpha}$ is Lipschitz too, suppose that $x,y\in G_{\alpha_n}\subseteq G_\alpha$. If  $i < n$, then one has
$d_{\alpha_i}(x_{< \alpha_i},y_{< \alpha_i}) \leq \operatorname{Lip}(\pi_{\alpha_n \alpha_i}) d_{\alpha_n}(x_{< \alpha_n},y_{< \alpha_n})$. On the other hand, if $n \leq i$, then $\iota_{\alpha_n \alpha_i}(x_{< \alpha_n}) = x_{< \alpha_i}$
and $\iota_{\alpha_n \alpha_i}(y_{< \alpha_n}) = y_{< \alpha_i}$, and so
$d_{\alpha_i}(x_{< \alpha_i},y_{< \alpha_i}) \leq L_i d_{\alpha_n}(x_{< \alpha_n},y_{< \alpha_n})$. 
Therefore,
\begin{align}
d_\alpha(x,y) & = \sum\limits_{i <n} \tfrac{1}{L_i 2^{i+1}} d_{\alpha_i}(x_{< \alpha_i},y_{< \alpha_i}) + 
\sum\limits_{i \geq n} \tfrac{1}{L_i 2^{i+1}} d_{\alpha_i}(x_{< \alpha_i},y_{< \alpha_i}) \\
& \leq \sum\limits_{i < n} \tfrac{\operatorname{Lip}(\pi_{\alpha_n \alpha_i})}{L_i 2^{i+1}} d_{\alpha_n}(x_{< \alpha_n},y_{< \alpha_n}) + \sum\limits_{i \geq n} \tfrac{1}{2^{i+1}} d_{\alpha_n}(x_{< \alpha_n},y_{< \alpha_n}) \\
& \leq (\sum\limits_{i < n}\tfrac{\operatorname{Lip}(\pi_{\alpha_n \alpha_i})}{L_i 2^{i+1}} +1)d_{\alpha_n}(x_{< \alpha_n},y_{< \alpha_n}).
\end{align}
This shows that $\iota_{\alpha_n \alpha}$ is Lipschitz. If $\beta < \alpha$, then there is $n < \omega$ such that $\beta \leq \alpha_n$, and by the inductive hypothesis, the composites $\iota_{\beta\alpha} = \iota_{\beta\alpha_n }\iota_{\alpha_n\alpha}$ and $\pi_{\alpha\beta}=\pi_{\alpha\alpha_n}\pi_{\alpha_n\beta}$ are Lipschitz. \qed
\end{example}

Let $\mathsf{TVS}_{\mathbb K}$ denote the category of topological vector spaces over $\mathbb{K}\in \{\mathbb R, \mathbb C\}$ and their continuous homomorphisms, and let $\mathsf{LCTVS}_\mathbb{K}$ denote its full subcategory consisting of locally convex spaces.

\begin{corollary} \label{cofin:cor:Banach}
Let $\{E_\alpha\}_{\alpha \in \mathbb{I}}$ be a long family of Banach spaces over $\mathbb K$ (where $\mathbb{K}\in \{\mathbb R, \mathbb C\}$) with isometries as bonding maps. Then:

\begin{enumerate}

\item
$\{E_\alpha\}_{\alpha \in \mathbb{I}}$  satisfies ACP;

\item
$E=\bigcup\limits_{\alpha\in\mathbb{I}} E_\alpha$ with the colimit space topology $\mathscr{T}$ is a Banach space; and

\item
$(E,\mathscr{T})$ is the colimit of $\{E_\alpha\}_{\alpha \in \mathbb{I}}$ in $\mathsf{TVS}_\mathbb{K}$ and $\mathsf{LCTVS}_\mathbb{K}$.

\end{enumerate}
\end{corollary}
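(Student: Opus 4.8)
The plan is to reduce everything to Corollary~\ref{cofin:cor:Lipschitz} and the universal property of the colimit space topology. Each $E_\alpha$, regarded as an abelian topological group under addition, is metrizable via its norm metric $d_\alpha(x,y)=\lVert x-y\rVert_\alpha$, and a linear isometry is precisely a Lipschitz map with constant $L=1$. Thus $\{E_\alpha\}_{\alpha\in\mathbb I}$ is a strict long family of metrizable groups whose bonding maps are Lipschitz with the fixed constant $1$, and Corollary~\ref{cofin:cor:Lipschitz} applies directly: it yields part (a) (ACP holds) and tells us that $\mathscr T$ is generated by the metric $d(x,y)=\limsup_\alpha d_\alpha(x,y)$.

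Next I would upgrade this metric to a norm. Because the bonding maps are isometries, the quantity $\widetilde d_\alpha$ from the proof of Theorem~\ref{cofin:thm:Lipschitz} satisfies $d_\alpha\le\widetilde d_\alpha\le L d_\alpha=d_\alpha$, so $\widetilde d_\alpha=d_\alpha$ and consequently $d$ restricts to $d_\alpha$ on each $E_\alpha$. Since $\mathbb I$ is directed, any finite set of points of $E=\bigcup_\alpha E_\alpha$ lies in a common $E_\alpha$, so $E$ is a $\mathbb K$-vector space and the function $\lVert x\rVert_E:=d(x,0)$ inherits, level by level, homogeneity and the triangle inequality from the norms $\lVert\cdot\rVert_\alpha$, while positivity is immediate from $d$ being a metric; moreover $\lVert x-y\rVert_E=d(x,y)$. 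Hence $(E,\lVert\cdot\rVert_E)$ is a normed $\mathbb K$-space whose norm topology is exactly $\mathscr T$.

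For part (b) it remains to prove completeness, and here I would use longness once more. A $d$-Cauchy sequence $(x_n)$ is a countable subset of $E$, so each $x_n$ lies in some $E_{\alpha_n}$, and the countable set $\{\alpha_n\}$ has an upper bound $\alpha\in\mathbb I$; thus $(x_n)\subseteq E_\alpha$. Since $d$ restricts to $d_\alpha$ and $E_\alpha$ is a Banach space, $(x_n)$ converges in $E_\alpha$, and because $E_\alpha\hookrightarrow E$ is an embedding the same point is its limit in $(E,\mathscr T)$. Therefore $E$ is complete, i.e.\ a Banach space; in particular it is a locally convex topological vector space.

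Finally, for part (c) I would invoke the universal property. The union $E=\bigcup_\alpha E_\alpha$ is the colimit of $\{E_\alpha\}$ in the category of $\mathbb K$-vector spaces, so any cocone $(f_\alpha\colon E_\alpha\to F)$ of continuous linear maps into a topological vector space $F$ factors through a unique linear map $f\colon E\to F$. By the definition of $\mathscr T$ as the finest topology making the inclusions $\iota_\alpha$ continuous, $f$ is continuous precisely because each $f\circ\iota_\alpha=f_\alpha$ is; hence $(E,\mathscr T)$ is the colimit in $\mathsf{TVS}_{\mathbb K}$. As $E$ is itself locally convex, the identical argument restricted to locally convex targets $F$ shows that $(E,\mathscr T)$ also serves as the colimit in $\mathsf{LCTVS}_{\mathbb K}$. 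The one point demanding care is this transfer between categories: a cocone in $\mathsf{LCTVS}_{\mathbb K}$ is in particular a cocone in $\mathsf{TVS}_{\mathbb K}$ with locally convex vertex, so the unique $\mathsf{TVS}_{\mathbb K}$-factorization is automatically an $\mathsf{LCTVS}_{\mathbb K}$-morphism. I expect the completeness step to be the main (though modest) obstacle, since it is the one place where longness is used beyond what Corollary~\ref{cofin:cor:Lipschitz} already supplies.
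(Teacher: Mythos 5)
Your proposal is correct and follows essentially the same route as the paper: part (a) via Corollary~\ref{cofin:cor:Lipschitz}, the observation that with isometric bonding maps the metric from Theorem~\ref{cofin:thm:Lipschitz} restricts to each $\|\cdot\|_\alpha$ and hence comes from a norm, completeness via longness (a Cauchy sequence lies in a single $E_\alpha$), and part (c) from the fact that $\mathscr{T}$ is a locally convex vector topology together with the universal property of the colimit space topology. Your spelled-out verification of the universal property and the transfer between $\mathsf{TVS}_\mathbb{K}$ and $\mathsf{LCTVS}_\mathbb{K}$ is just an elaboration of what the paper compresses into ``and (c) follows.''
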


\begin{proof}
Statement (a) follows from Corollary~\ref{cofin:cor:Lipschitz}. By Theorem~\ref{cofin:thm:Lipschitz}, the topology $\mathscr{T}$ is generated by the metric 
\begin{align}
d(x,y)=\min\limits_{\alpha \in \mathbb{I}} \{ d_\alpha(x,y) \mid x ,y \in E_\alpha \} = 
\|x - y\|_\alpha, \mbox{ where } x,y \in E_\alpha.
\end{align}
Thus, the topology $\mathscr{T}$ is generated by the norm
\begin{align}
\|x\| = \|x\|_\alpha, \mbox{ where } x \in E_\alpha.
\end{align}
In particular, $(E,\mathscr{T})$ is a locally convex topological vector space, and (c) follows.
In order to show~(b), we prove that the normed space $(E,\|\cdot\|)$ is complete. To that end, let $\{x_n\} \subseteq E$ be a Cauchy-sequence. Since $\mathbb{I}$ is long, there is $\alpha \in \mathbb{I}$ such that $x_n \in E_\alpha$ for every $n$. Thus, $\{x_n\}$ is a Cauchy-sequence in the Banach space $E_\alpha$, and $x_n \longrightarrow x_0$ for $x_0 \in E_\alpha$. Hence, $x_n \longrightarrow x_0$ in $E$.
\end{proof}

\begin{example}
Let $J$ be an uncountable set, put $\mathbb{I}=[J]^{\leq \omega}$, and let $\mathbb{K} \in \{\mathbb{R}, \mathbb{C}\}$.  By Corollary~\ref{cofin:cor:Banach}, for $1 \leq p < \infty$, the space 
\begin{align}
\ell^p(J,\mathbb{K}) = \bigcup\limits_{D \in \mathbb{I}} \ell^p(D,\mathbb{K})
\end{align}
is the colimit of $\{\ell^p(D,\mathbb{K})\}_{D\in\mathbb{I}}$ in $\mathsf{Top}$, $\mathsf{TVS}_\mathbb{K}$, and $\mathsf{LCTVS}_\mathbb{K}$.
\end{example}

\begin{example} \label{cofin:ex:Bisgaard}
Given a strict long family of topological groups, the colimit space topology need not be a group topology. Let $J$ be an uncountable set, and for every finite subset $F\subseteq J$, put $G_F:=\mathbb{R}^{F}$ with the Euclidean topology.  Put $\mathbb{I}:=[J]^{\leq \omega}$, and for $D \in \mathbb{I}$, put $G_D:=\mathbb{R}^{(D)}$ equipped with the colimit space topology induced by $\{G_F\}_{F \in [D]^{<\omega}}$. Since each $G_D$ is locally compact, $G_D$ is a topological group. (In fact, it can be shown that $G_D$ carries the box topology.) One has
\begin{align}
\colim\limits_{D \in \mathbb{I}}   G_D = \colim\limits_{D \in \mathbb{I}}   \colim\limits_{F \in [D]^{<\omega}} G_F = 
\colim\limits_{F \in [J]^{<\omega}} G_F
\end{align}
where the colimits are taken in $\mathsf{Top}$. Bisgaard showed that addition is not continuous in the colimit space topology generated by the family $\{G_F\}_{F \in [J]^{<\omega}} $ (\cite[Theorem]{Bisgaard}). Thus, $\{G_D\}_{D \in \mathbb{I}}$ does not satisfy ACP.
\end{example}

\section{Continuous maps with compact support}
\label{sect:cts}

In this section, we prove Theorems~\ref{thm:main:cts} and~\ref{thm:main:neg-cts} concerning groups of continuous functions with compact support. We prove Theorem~\ref{thm:main:cts} by establishing the following more elaborate statement.

\begin{Ltheorem*}[\ref{thm:main:cts}$\empty^\prime$]
Let $X$ be a long space and $M$ be a metrizable group.
For $K \in \mathscr{K}(X)$, let $C_K(X,M)$ denote the subgroup of $C_{cpt} (X,M)$  consisting of functions with support in $K$. Then the family $\{C_K(X,M)\}_{K\in\mathscr{K}(X)}$ satisfies ACP, and
\begin{align}
\colim\limits_{K \in \mathscr{K}(X)}  C_K(X,M) = C_{cpt} (X,M).
\end{align}
Furthermore, if $M$ is a Banach space over  $\mathbb K$ (where $\mathbb{K}\in \{\mathbb R, \mathbb C\}$), then $C_{cpt} (X,M)$ is a Banach space, and it is the colimit of $\{C_K(X,M)\}_{K\in\mathscr{K}(X)}$ in $\mathsf{TVS}_\mathbb{K}$ and $\mathsf{LCTVS}_\mathbb{K}$.
\end{Ltheorem*}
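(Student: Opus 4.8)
The plan is to show that the colimit space topology $\mathscr{T}$ on the union $C_{cpt}(X,M)=\bigcup_{K\in\mathscr{K}(X)}C_K(X,M)$ is nothing but the uniform topology, and then feed this into the results of Section~\ref{sect:cofin}. Fix a bounded metric $\rho\le 1$ on $M$ generating its topology and let $\mathscr{S}$ be the topology induced by $d_\infty(f,g)=\sup_{x\in X}\rho(f(x),g(x))$, which is a bounded metric on all of $C_{cpt}(X,M)$ since $\rho\le 1$. Thus $(C_{cpt}(X,M),\mathscr{S})$ is \emph{metrizable}, and $d_\infty$ restricts to the uniform metric of each $C_K(X,M)$, so $\mathscr{S}$ is admissible; moreover the bonding maps are closed embeddings, since a uniform limit of maps supported in $K$ is continuous and vanishes off $K$, hence is again supported in $K$.

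Since $X$ is a long space, $\mathscr{K}(X)$ is long, so the family $\{C_K(X,M)\}_{K\in\mathscr{K}(X)}$ is $\aleph_0$-long; and since $\mathscr{S}$ is metrizable it is countably tight, i.e. $t(C_{cpt}(X,M),\mathscr{S})\le\aleph_0$. Proposition~\ref{cofin:prop:admissible}, applied with $\kappa=\aleph_0$, then forces $\mathscr{S}=\mathscr{T}$, so the colimit space topology is metrizable and $\colim_{K\in\mathscr{K}(X)}C_K(X,M)=C_{cpt}(X,M)$ carries the uniform topology. Consequently $\mathscr{T}\times\mathscr{T}$ is metrizable, hence countably tight, and Corollary~\ref{cofin:cor:product} yields ACP.

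For the Banach case I would use the norm $\|f\|_\infty=\sup_{x\in X}\|f(x)\|_M$, which is finite because $x\mapsto\|f(x)\|_M$ is continuous and vanishes off the compact support of $f$; as $\min(t,1)=t$ for small $t$, this norm generates the same topology $\mathscr{T}=\mathscr{S}$, so $(C_{cpt}(X,M),\mathscr{T})$ is a normed, hence locally convex, topological vector space. To prove completeness, take a Cauchy sequence $\{f_n\}$: the union $\bigcup_n\operatorname{supp}(f_n)$ is $\sigma$-compact, so by the long space hypothesis it has compact closure $K$, whence every $f_n$ lies in $C_K(X,M)$. The latter is a Banach space (a uniform limit of continuous maps supported in $K$ is again of this form), so the sequence converges in $C_K(X,M)\subseteq C_{cpt}(X,M)$, proving that $C_{cpt}(X,M)$ is complete.

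Finally, the colimit topologies $\mathscr{T}_{\mathsf{TVS}}$ and $\mathscr{T}_{\mathsf{LCTVS}}$ on the algebraic colimit $C_{cpt}(X,M)$ satisfy $\mathscr{T}_{\mathsf{TVS}},\mathscr{T}_{\mathsf{LCTVS}}\subseteq\mathscr{T}$, since every (locally convex) vector topology making the inclusions continuous is in particular a topology making them continuous; conversely $\mathscr{T}$ is itself a locally convex vector topology with continuous inclusions, so by maximality $\mathscr{T}\subseteq\mathscr{T}_{\mathsf{LCTVS}}\subseteq\mathscr{T}_{\mathsf{TVS}}\subseteq\mathscr{T}$, forcing equality. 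Thus $(C_{cpt}(X,M),\mathscr{T})$ is the colimit in both $\mathsf{TVS}_{\mathbb K}$ and $\mathsf{LCTVS}_{\mathbb K}$. I expect the crux to be the identification $\mathscr{S}=\mathscr{T}$: the genuine content is that compact supports render the global uniform metric (and the sup-norm) finite, and that the long space hypothesis is precisely what Proposition~\ref{cofin:prop:admissible} requires in order to upgrade this admissible metric topology to the colimit topology, the same hypothesis reappearing through $\sigma$-compactness in the completeness step; the remaining verifications are routine.
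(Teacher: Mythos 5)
Your proof is correct, and its underlying strategy is the same as the paper's: identify the colimit space topology with the uniform (sup-metric) topology using longness of $\mathscr{K}(X)$ together with countable tightness, then deduce ACP from metrizability of the square. The difference is in the route through Section~\ref{sect:cofin}. The paper merely verifies that the bonding maps are isometries for the sup-metrics $d_K$ and then cites Corollary~\ref{cofin:cor:Lipschitz} (which yields ACP and identifies the colimit topology as the $\limsup$-metric topology) and Corollary~\ref{cofin:cor:Banach} (for the Banach statement). You instead exploit the fact that here the isometries all come from a single globally defined metric $d_\infty$ on $C_{cpt}(X,M)$: this lets you verify admissibility directly and apply Proposition~\ref{cofin:prop:admissible} and Corollary~\ref{cofin:cor:product} without invoking Theorem~\ref{cofin:thm:Lipschitz} at all, thereby bypassing the $\limsup$-metric construction and Lemma~\ref{lemma:cofin:monotone}, which the general Lipschitz setting needs precisely because no global metric is available there. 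Likewise, your completeness argument (a Cauchy sequence has $\sigma$-compact union of supports, hence by longness all its terms lie in a single Banach space $C_K(X,M)$) and your sandwich argument $\mathscr{T}_{\mathsf{LCTVS}}\subseteq\mathscr{T}_{\mathsf{TVS}}\subseteq\mathscr{T}\subseteq\mathscr{T}_{\mathsf{LCTVS}}$ reprove Corollary~\ref{cofin:cor:Banach} inline in this special case. What the paper's route buys is brevity and the presentation of Theorem~\ref{thm:main:cts}$\empty^\prime$ as an instance of a general principle about long families with uniformly Lipschitz bonding maps; what yours buys is a shorter chain of dependencies and the observation that, for compactly supported function groups, the heavier part of that machinery is not needed.
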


\begin{proof}
Let $\rho$ be a metric on $M$.
For each $K \in \mathscr{K}(X)$, equip $C_K(X,M)$  with the sup-metric $d_K$ on $K$, defined by
$d_K(f_1,f_2):= \sup\limits_{x\in K} \rho(f_1(x),f_2(x))$. The bonding maps are isometries, the family $\{(C_K(X,M),d_K)\}_{K\in\mathscr{K}(X)}$ is strict, and since $X$ is a long space, the family is also long. Thus, by Corollary~\ref{cofin:cor:Lipschitz}, the family satisfies ACP, and the colimit topology on $C_{cpt} (X,M)= \bigcup\limits_{K\in\mathscr{K}(X)} C_K(X,M)$ is generated by the metric
\begin{align}
d(f_1,f_2):=\limsup\limits_{K\in\mathscr{K}(X)} d_K (f_1,f_2) = 
\limsup\limits_{K\in\mathscr{K}(X)} (\sup\limits_{x\in K} \rho(f_1(x),f_2(x))) = \sup\limits_{x\in X} \rho(f_1(x),f_2(x)).
\end{align}
The last statement follows by Corollary~\ref{cofin:cor:Banach}.
\end{proof}

\begin{examples} Let $\mathbb{K} \in \{\mathbb{R}, \mathbb{C}\}$. 

\begin{enumerate}

\item
For an ordinal $\alpha$ of uncountable cofinality, $C_{cpt}(\alpha,\mathbb{K}) = \colim\limits_{\beta< \alpha} C_{[0,\beta]}(\alpha,\mathbb{K})=\colim\limits_{\beta< \alpha} C([0,\beta],\mathbb{K})$.

\item
$C_{cpt}(\mathbb{L}_{\geq 0},\mathbb{K}) = \colim\limits_{\beta< \omega_1} C_{[0,\beta]}(\mathbb{L}_{\geq 0},\mathbb{K})$ ($\omega_1$ is identified with $\omega_1 \times\{0\} \subseteq \mathbb{L}_{\geq 0}$).

\end{enumerate}
\end{examples}

We establish Theorem~\ref{thm:main:neg-cts} by first proving a stronger result using the following observation.

\begin{remark} \label{remark:cts:acp}
Let $\{G_\alpha\}_{\alpha \in \mathbb{I}}$ be a directed family of topological groups that satisfies ACP, and let $H$ be a closed subgroup of $\colim\limits_{\alpha\in\mathbb{I}} G_\alpha$. Then 
$H=\colim\limits_{\alpha\in\mathbb{I}} (G_\alpha\cap H)$ (\cite[Proposition 2.4.15]{Engel}), and thus
$\{G_\alpha \cap H\}_{\alpha \in \mathbb{I}}$ also satisfies ACP.
\end{remark}

\begin{Ltheorem*}[\ref{thm:main:neg-cts}$\empty^\prime$]
Let $X$ be a locally compact Hausdorff space and $V$ a non-trivial metrizable topological vector space. If $X$ contains a regular closed $\sigma$-compact non-compact subset, then the following statements are equivalent:

\begin{enumerate}[label={\rm (\roman*)}]

\item
the family $\{C_K(X,V)\}_{K\in\mathscr{K}(X)}$ satisfies ACP;

\item 
$X \cong \mathbb{N}$ (discrete) and $V$ is finite dimensional.

\end{enumerate}
\end{Ltheorem*}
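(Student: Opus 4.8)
The plan is to prove the equivalence by handling the easy implication (ii) $\Rightarrow$ (i) directly and reducing the substantive implication (i) $\Rightarrow$ (ii) to countable subfamilies. For (ii) $\Rightarrow$ (i): if $X \cong \mathbb{N}$ is discrete, then every compact subset is finite, so $\mathscr{K}(X) = [\mathbb{N}]^{<\omega}$ is a countable directed set, and for finite $K$ one has $C_K(X,V) = V^K$, which is finite-dimensional (as $V$ is) and hence locally compact Hausdorff. Thus $\{C_K(X,V)\}_{K \in \mathscr{K}(X)}$ is a countable family of locally compact Hausdorff groups, and ACP holds by the result quoted in the introduction.

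For (i) $\Rightarrow$ (ii), the key is a reduction that I would isolate as a lemma: if $\{C_K(X,V)\}_{K\in\mathscr{K}(X)}$ satisfies ACP and $Z \subseteq X$ is any regular closed $\sigma$-compact non-compact subset, then $C_K(X,V)$ is finite-dimensional for every $K \in \mathscr{K}(Z)$. To prove it, set $H_Z := \{f \in C_{cpt}(X,V) \mid \operatorname{supp} f \subseteq Z\}$. Since each evaluation $\operatorname{ev}_x$ is continuous on the colimit (being $1$-Lipschitz on each $C_K(X,V)$), $H_Z = \bigcap_{x \notin Z} \operatorname{ev}_x^{-1}(0)$ is a closed subgroup, and by Remark~\ref{remark:cts:acp} the family $\{C_K(X,V) \cap H_Z\}_K = \{C_L(X,V)\}_{L \in \mathscr{K}(Z)}$ satisfies ACP. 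As $Z$ is locally compact Hausdorff and $\sigma$-compact, it is hemicompact, so this family has a cofinal increasing sequence $L_1 \subseteq L_2 \subseteq \cdots$; passing to it preserves both the colimit and ACP, and the bonding maps $C_{L_n}(X,V) \hookrightarrow C_{L_{n+1}}(X,V)$ are closed isometric embeddings of metrizable groups. I would then invoke Theorem~\ref{thm:Yamasaki}: since ACP holds, its two conditions cannot both be met. Condition (2) always holds here — using that $Z = \operatorname{cl}(U)$ is regular closed and non-compact, for each $n$ there is a bump function supported in $U$ but outside $L_n$, so the chain never stabilizes, and an open-subgroup argument (scaling $tf \to 0$, which preserves support) shows $C_{L_n}(X,V)$ is never open in a strictly larger $C_{L_m}(X,V)$. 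Hence condition (1) must fail, i.e.\ every $C_{L_n}(X,V)$ is locally compact; being a metrizable topological vector space over $\mathbb{K}$, it is therefore finite-dimensional, and so is each $C_K(X,V)$ with $K$ contained in some $L_n$.

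With the lemma in hand, I would first apply it to the given $Z$ to show $V$ is finite-dimensional: since $Z$ is regular closed, a Baire-category argument on the open set $U$ yields some $L_n$ with $\operatorname{int}_X(L_n) \neq \emptyset$, and evaluation at an interior point $x_0$ maps the finite-dimensional space $C_{L_n}(X,V)$ onto $V$ (via bump functions $\phi v$), forcing $\dim V < \infty$. Next I would show $X$ is discrete: if $p \in X$ were non-isolated, choose a compact regular closed neighbourhood $B = \operatorname{cl}(W)$ of $p$; then $Z \cup B = \operatorname{cl}(U \cup W)$ is again regular closed, $\sigma$-compact and non-compact, so by the lemma $C_B(X,V)$ is finite-dimensional. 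But $W \subseteq \operatorname{int}_X(B)$ is infinite, and an infinite Hausdorff space carries infinitely many disjoint nonzero bump functions, making $C_B(X,V)$ infinite-dimensional — a contradiction. Hence $X$ is discrete.

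Finally, $X$ is discrete and infinite (it contains the non-compact, hence infinite, set $Z$), so it remains to show $X$ is countable. Here I would again use Remark~\ref{remark:cts:acp}: fixing $v_0 \neq 0$, the functions valued in the line $\mathbb{K} v_0$ form a closed subgroup whose intersection with $C_K(X,V)$ is isomorphic to $C_K(X,\mathbb{K}) = \mathbb{K}^K$, so $\{\mathbb{K}^K\}_{K \in [X]^{<\omega}}$ would satisfy ACP; if $X$ were uncountable this is exactly the colimit of Example~\ref{cofin:ex:Bisgaard}, which does \emph{not} satisfy ACP. Therefore $X$ is countably infinite and discrete, i.e.\ $X \cong \mathbb{N}$, completing (ii). I anticipate the main obstacle to be the reduction lemma itself: arranging the closed-subgroup and hemicompactness passage to a countable chain and, within it, verifying that Yamasaki's non-openness condition (2) always holds via the scaling argument, so that local compactness — and hence finite-dimensionality — is forced; the bootstrapping from information about a single $Z$ to the discreteness of all of $X$ (by enlarging $Z$ around a putative non-isolated point) is the other delicate point.
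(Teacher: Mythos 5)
Your proposal is correct and follows essentially the same route as the paper's own proof: the closed-subgroup heredity of ACP (Remark~\ref{remark:cts:acp}) to restrict to functions supported in a regular closed $\sigma$-compact set, a cofinal countable exhaustion so that Theorem~\ref{thm:Yamasaki} applies, exclusion of the open-subgroup alternative (your scaling argument $tf\to 0$ is the paper's path-connectedness argument in disguise), local compactness forcing finite dimensionality of the pieces, and Example~\ref{cofin:ex:Bisgaard} for countability. The only differences are organizational --- you package the Yamasaki step as a reusable lemma and obtain discreteness by enlarging $Z$ around a putative non-isolated point, whereas the paper enlarges the exhaustion of $A$ to contain a given relatively compact open set --- and these are immaterial.
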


\begin{proof}
(ii) $\Rightarrow$ (i):
Since $X$ is discrete and countable,  $\mathscr{K}(X)=[X]^{<\omega}$ is countable. Since $V$ is finite dimensional, the topological vector space $C_K(X,V) \cong V^{|K|}$ is finite dimensional, and hence locally compact for every $K \in \mathcal{K}(X)$ (cf.~\cite[Theorem 3.6]{Schaefer}). Thus, $\{C_K(X,V)\}_{K\in\mathscr{K}(X)}$, being a countable family of locally compact groups, satisfies ACP (\cite[Theorem~4.1]{Hirai1} and \cite[Propositions 4.7 and 5.4]{Gloeckner1}).

(i) $\Rightarrow$ (ii): Since $\{C_K(X,V)\}_{K\in\mathscr{K}(X)}$ satisfies ACP, by Remark~\ref{remark:cts:acp},
for every closed subgroup $H$ of $G:=\colim\limits_{K\in\mathscr{K}(X)} C_K(X,V)$, one has
\begin{align} \label{neg-cts:eq:H1}
H = \colim\limits_{K\in\mathscr{K}(X)} (C_K(X,V)\cap H),
\end{align}
and so $\{C_K(X,V)\cap H\}_{K\in\mathscr{K}(X)}$ satisfies ACP. In particular, if $A$ is a closed
subspace of $X$ and $H$ is the (closed) subgroup of $G$ consisting of functions whose support is contained in $A$, then $C_K(X,V)\cap H = C_{K\cap A}(X,V)$, and thus $\{C_K(X,V)\}_{K\in\mathscr{K}(A)}$ satisfies ACP.

In order to prove that $X$ is discrete, let $U\subseteq X$ be an open subset such that  $\operatorname{cl}_X U$ is compact. We show that $U$ is finite. Let $A \subseteq X$ be a regular closed $\sigma$-compact non-compact subspace of $X$. Then $A$ contains a countable increasing family $\{K_n\}_{n=1}^\infty$ of compact subsets such that $A=\bigcup\limits_{n=1}^\infty K_n$. Without loss of generality, we may assume that $\operatorname{cl}_X U \subseteq K_1$.  By enlarging the $K_n$ if necessary, we may also assume that $K_n \subseteq \operatorname{int}_A K_{n+1}$ for every $n$. Then $\{K_n\}_{n=1}^\infty$ is cofinal in $\mathscr{K}(A)$, that is, every compact subset of $A$ is contained in one of the $K_n$. Thus, by (\ref{neg-cts:eq:H1}), 
\begin{align}
H = \colim\limits_{n } C_{K_n}(X,V),
\end{align}
and in particular, addition is continuous in the colimit space topology on the right-hand side. Therefore, since each $C_{K_n}(X,V)$ is metrizable, by Theorem~\ref{thm:Yamasaki}, 

\begin{enumerate}

\item
$C_{K_n}(X,V)$ is locally compact for every $n$, or

\item
there is $n$ such that $C_{K_n}(X,V)$ is an open subgroup of $C_{K_{m}}(X,V)$ for every $m>n$.

\end{enumerate}
Since $A$ is regular closed and not compact, $\operatorname{int}_X A\backslash K_n$ is not empty for every $n$. In particular, there is a continuous non-zero $f\colon X\rightarrow \mathbb{R}$ with compact support $K$ such that
$K \subseteq \operatorname{int}_X A\backslash K_n$. Since $\{K_n\}_{n=1}^\infty$ is cofinal in $\mathscr{K}(A)$,
there is $m$ such that $K\subseteq K_m$. Thus, $C_{K_{n}}(X,\mathbb{R}) \subsetneq C_{K_{m}}(X,\mathbb{R})$,
and it follows that $C_{K_{n}}(X,V) \subsetneq C_{K_{m}}(X,V)$, because $V$ is non-trivial. Consequently, $C_{K_{n}}(X,V)$ cannot be an open subgroup of the (path) connected group $C_{K_{m}}(X,V)$. 

Therefore, (b) is not possible, and (a) holds: $C_{K_n}(X,V)$ is locally compact for every $n$. Since $C_{K_n}(X,V)$ is a topological vector space, it is locally compact if and only if it is finite dimensional (cf.~\cite[Theorem 3.6]{Schaefer}). Hence, (a) implies that each $K_n$ is finite and $V$ is finite dimensional. In particular, $U \subseteq K_1$ is finite, which shows that $X$ is discrete.


Since $X$ contains a non-compact subset $A$, it cannot be finite, and so it remains to show that $X$ is countable. Fix a one-dimensional subspace $W$ of $V$, and let $H$ denote the (closed) subgroup of $G$ consisting of functions with image in $W$. Then $H\cap C_K(X,V) = C_K(X,W) = \mathbb{R}^{|K|}$ for every $K \in\mathscr{K}(X)$, and by (\ref{neg-cts:eq:H1}),
$\{\mathbb{R}^{|K|}\}_{K \in\mathscr{K}(X)}$ satisfies ACP. Therefore, by Example~\ref{cofin:ex:Bisgaard}, $X$ is countable.
\end{proof}

Theorem~\ref{thm:main:neg-cts} follows from Theorem~\ref{thm:main:neg-cts}$\empty^\prime$ and the following lemma.

\begin{lemma}
If $X$ is a locally compact Hausdorff space and $X$ is not pseudocompact, then $X$ contains a regular closed $\sigma$-compact non-pseudocompact subset.
\end{lemma}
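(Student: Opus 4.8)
The plan is to produce the required subset as the closure of a locally finite union of relatively compact open sets, extracted from a witness to non-pseudocompactness. Since $X$ is locally compact Hausdorff, it is Tychonoff (indeed regular), so I may invoke the standard characterization (see~\cite{Engel}): a Tychonoff space fails to be pseudocompact precisely when it carries an infinite locally finite family of non-empty open sets. Applying this to $X$ and passing to a countable subfamily (a subfamily of a locally finite family is again locally finite, and a locally finite family cannot repeat a fixed non-empty set infinitely often), I obtain an infinite locally finite family $\{U_n\}_{n\in\mathbb{N}}$ of non-empty open subsets of $X$, which I may take to consist of distinct sets.

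Using local compactness together with regularity, for each $n$ I choose a point $p_n \in U_n$ and an open set $O_n$ with $p_n \in O_n \subseteq \operatorname{cl}_X O_n \subseteq U_n$ and $\operatorname{cl}_X O_n$ compact; this shrinking step is exactly where local compactness enters. The family $\{O_n\}$ is still locally finite, because $O_n \subseteq U_n$. I then set
\[
A := \operatorname{cl}_X\Big(\bigcup_{n} O_n\Big).
\]
Because $\{O_n\}$ is locally finite, the closure of the union equals the union of the closures, so $A = \bigcup_n \operatorname{cl}_X O_n$ is a countable union of compact sets, hence $\sigma$-compact. Moreover $\bigcup_n O_n$ is an open subset of $X$ contained in $A$, so $\bigcup_n O_n \subseteq \operatorname{int}_X A$, whence $A = \operatorname{cl}_X(\bigcup_n O_n) \subseteq \operatorname{cl}_X(\operatorname{int}_X A) \subseteq A$; thus $A = \operatorname{cl}_X(\operatorname{int}_X A)$ is regular closed.

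It remains to verify that $A$ is not pseudocompact. As a subspace of the Tychonoff space $X$, $A$ is Tychonoff. Each $O_n$ is open in $X$ and contained in $A$, hence open and non-empty in $A$, and the family $\{O_n\}$, being locally finite in $X$, is locally finite in $A$. So $A$ carries an infinite locally finite family of non-empty open sets, and by the same characterization $A$ is not pseudocompact (in particular non-compact, which is what Theorem~\ref{thm:main:neg-cts}$\empty^\prime$ requires). The only non-routine ingredients are the two appeals to the locally-finite-open-family characterization of pseudocompactness and the identity $\operatorname{cl}_X(\bigcup_n O_n)=\bigcup_n \operatorname{cl}_X O_n$ for locally finite families; I expect no genuine obstacle beyond correctly performing the relatively-compact shrinking of the $U_n$ to the $O_n$.
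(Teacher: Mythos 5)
Your proof is correct, and its overall skeleton matches the paper's: both arguments manufacture a countable locally finite family of non-empty open sets with compact closures and take $A$ to be the closure of its union, with $\sigma$-compactness and regular-closedness both flowing from the identity $\operatorname{cl}_X\bigl(\bigcup_n O_n\bigr)=\bigcup_n \operatorname{cl}_X O_n$ for locally finite families. The genuine difference is the key lemma. You invoke the locally-finite-family characterization of pseudocompactness for Tychonoff spaces (a Tychonoff space is pseudocompact iff every locally finite family of non-empty open sets is finite, see \cite{Engel}) twice: once to extract $\{U_n\}$ from the failure of pseudocompactness of $X$, and once, applied to the Tychonoff subspace $A$, to conclude that $A$ is not pseudocompact. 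The paper avoids this theorem entirely and works straight from the definition: it takes a continuous unbounded $f\colon X\rightarrow(0,\infty)$, picks points with $f(x_{n+1})>f(x_n)+2$ and relatively compact neighborhoods $U_n$ of $x_n$ with $f(U_n)\subseteq(f(x_n)-1,f(x_n)+1)$, checks local finiteness by hand (a neighborhood on which $f$ varies by less than $1$ meets at most two of the $U_n$), and then gets non-pseudocompactness of $A$ for free because $f_{|A}$ is unbounded. So your route buys brevity and modularity at the cost of an external characterization theorem (whose ``infinite family $\Rightarrow$ not pseudocompact'' direction genuinely needs complete regularity of $A$, which you correctly supply), while the paper's route is self-contained and yields an explicit unbounded function on $A$. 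The one subtle point specific to your approach --- that a locally finite indexed family cannot repeat a non-empty set infinitely often, so the shrunken family $\{O_n\}$ really is infinite as a family of distinct sets --- you also handle correctly.
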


\begin{proof}
Let $f\colon X \rightarrow (0,\infty)$ be a continuous unbounded map. Pick $x_0 \in X$. For every $n >0$, pick $x_{n+1}\in X$ such that $f(x_{n+1}) > f(x_n)+2$. Since $f$ is continuous, for every $n\in\mathbb{N}$ there is a neighborhood $U_n$ of $x_n$ such that $\overline U_n$ is compact and $f(U_n)\subseteq (f(x_n)-1,f(x_n)+1)$. To show that $\{U_n\}_{n\in \mathbb{N}}$ is locally finite, let $x \in X$, and pick a neighborhood $V$ of $x$ such that $f(V)\subseteq (f(x)-1,f(x)+1)$. It follows from the choice of the $\{x_n\}_{n\in \mathbb{N}}$ that $V$ meets at most two members of the family $\{U_n\}_{n\in \mathbb{N}}$. Thus,
\begin{align}
A:=\bigcup\limits_{n\in\mathbb{N}} \overline{U_n}  = \overline{\bigcup\limits_{n\in\mathbb{N}} U_n}
\end{align}
is regular closed (\cite[1.1.11]{Engel}) and $\sigma$-compact. The set $A$ is not pseudocompact, because $f_{|A}$ is unbounded.
\end{proof}

Theorems~\ref{thm:main:neg-cts} and~\ref{thm:main:neg-cts}$\empty^\prime$ yield the following corollary.

\begin{corollary}
Let $X$ be a non-discrete locally compact Hausdorff space, and let $V$ be a non-trivial metrizable topological vector space. If

\begin{enumerate}

\item
$X$ is $\sigma$-compact and non-compact; or

\item
$X$ is not pseudocompact, 

\end{enumerate}
then the family $\{C_K(X,V)\}_{K\in\mathscr{K}(X)}$ does not satisfy ACP.
\qed
\end{corollary}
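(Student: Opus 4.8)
The plan is to read the corollary as the contrapositive of the equivalences already established in Theorems~\ref{thm:main:neg-cts} and~\ref{thm:main:neg-cts}$\empty^\prime$. The common thread is that condition~(ii) of both theorems requires $X\cong\mathbb{N}$ with the \emph{discrete} topology; since our $X$ is assumed non-discrete, (ii) must fail, and hence the equivalent condition~(i)---that $\{C_K(X,V)\}_{K\in\mathscr{K}(X)}$ satisfies ACP---must fail as well. So in each of the two cases it suffices to verify that the relevant theorem applies.

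For case~(b), where $X$ is not pseudocompact, the hypotheses of Theorem~\ref{thm:main:neg-cts} hold exactly as stated, so I would invoke it directly: non-discreteness of $X$ defeats~(ii), and the equivalence then defeats~(i), giving the failure of ACP.

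For case~(a), where $X$ is $\sigma$-compact and non-compact, I would instead apply Theorem~\ref{thm:main:neg-cts}$\empty^\prime$. Its only hypothesis beyond local compactness is that $X$ contain a regular closed $\sigma$-compact non-compact subset, and here $X$ itself qualifies: every space is regular closed in itself, since $\operatorname{cl}_X\operatorname{int}_X X = X$, and $X$ is $\sigma$-compact and non-compact by assumption. With the primed theorem in force, non-discreteness once more rules out~(ii) and therefore ACP.

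There is no real obstacle here: the corollary is a formal consequence of the two theorems, and the only verification of substance---that $X$ is regular closed in itself in case~(a)---is immediate. One could even observe that case~(a) is logically subsumed by case~(b), because a locally compact, $\sigma$-compact, non-compact Hausdorff space is paracompact and non-compact, hence not pseudocompact; but applying the primed theorem to case~(a) is more direct and avoids relying on that fact.
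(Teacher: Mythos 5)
Your proof is correct and matches the paper's intended argument: the corollary is stated with a \verb|\qed| precisely because it follows formally from Theorems~\ref{thm:main:neg-cts} and~\ref{thm:main:neg-cts}$\empty^\prime$, with case~(b) handled by the former and case~(a) by the latter (taking $X$ itself as the regular closed $\sigma$-compact non-compact subset), and non-discreteness of $X$ ruling out condition~(ii) in both. Your side remark that case~(a) is subsumed by case~(b) is also valid, but, as you note, unnecessary.
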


\section{Homeomorphisms with compact support: negative results}
\label{sect:homeo-neg}

For a locally  compact Hausdorff space, the homeomorphism group $\homeo(X)$ of $X$ equipped with the compact-open topology
need not be a topological group; however, if $X$ is compact or locally connected, then the compact-open topology is a group topology on $\homeo(X)$ (\cite[Theorem 4]{Arens}). For a compact subset $K$ of $X$, let $\homeo_K(X)$ denote the group of homeomorphisms of $X$ that fix every point outside $K$ (that is, $h(x)=x$ for all $x \in X \backslash K$), and equip it with the compact-open topology. Then $\homeo_K(X)$ is topologically isomorphic to a subgroup of $\homeo(K)$, where the latter is equipped with the compact-open topology, and as such,  $\homeo_K(X)$ is a topological group.

In this section, we consider when the family $\{\homeo_K(X)\}_{K \in \mathscr{K}(X)}$ satisfies ACP. We start off by proving a partial analogue of Theorem~\ref{thm:main:neg-cts}.

\begin{theorem} \label{homeo:thm:neg-acp}
Let $X$ be a locally compact Hausdorff space, and let $A\subseteq X$ be a regular closed, $\sigma$-compact, non-compact, metrizable subset. If $\{\homeo_K (X)\}_{K \in\mathscr{K}(X)}$ satisfies ACP, then at least one of the following holds:

\begin{enumerate}[label={\rm (\roman*)}]

\item
$\homeo_M (X)$ is locally compact for every compact metrizable subset $M \subseteq X$, or

\item
there is a regular closed compact $K_0\subseteq A$ such that:

\begin{enumerate}[label={\rm (\arabic*)}]

\item
for all $K\in \mathscr{K}(A)$ with $K_0 \subseteq K$, the subset $\homeo_{K_0} (X)$ is open in $\homeo_{K} (X)$; and

\item
for all $C \in \mathscr{K}(\overline{A\backslash K_0})$, the group $\homeo_C(X)$ is discrete.

\end{enumerate}

\end{enumerate}
\end{theorem}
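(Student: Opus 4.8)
The plan is to mimic the proof of Theorem~\ref{thm:main:neg-cts}, reducing the problem to a countable cofinal subfamily living inside $A$ and then feeding it into Yamasaki's theorem. First I would pass from $X$ to $A$. Let $H$ be the subgroup of $\colim_{K\in\mathscr{K}(X)}\homeo_K(X)$ consisting of homeomorphisms whose support lies in $A$. Since $A$ is closed, $H=\{h: h(x)=x\text{ for all }x\notin A\}$ is an intersection of sets whose trace on each $\homeo_K(X)$ is closed (evaluation at a point is continuous), so $H$ is a closed subgroup. Moreover $\homeo_K(X)\cap H=\homeo_{K\cap A}(X)$, and as $K$ runs over $\mathscr{K}(X)$ the sets $K\cap A$ run over all of $\mathscr{K}(A)$. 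Hence Remark~\ref{remark:cts:acp} applies and $\{\homeo_L(X)\}_{L\in\mathscr{K}(A)}$ satisfies ACP.

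Since $A$ is locally compact, $\sigma$-compact and metrizable, I would next fix an exhaustion $A=\bigcup_n K_n$ by compact sets that are regular closed in $A$, with $K_n\subseteq\operatorname{int}_A K_{n+1}$; such a sequence is cofinal in $\mathscr{K}(A)$. Each $K_n$ is compact metrizable, so $\homeo_{K_n}(X)$ is a metrizable topological group, and the inclusions $\homeo_{K_n}(X)\hookrightarrow\homeo_{K_{n+1}}(X)$ are closed embeddings. By cofinality this countable family still satisfies ACP, so the contrapositive of Theorem~\ref{thm:Yamasaki} yields a dichotomy: either \emph{(a)} $\homeo_{K_n}(X)$ is locally compact for every $n$, or \emph{(b)} there is $n_0$ with $\homeo_{K_{n_0}}(X)$ open in $\homeo_{K_m}(X)$ for all $m>n_0$.

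Case (b) should give conclusion (ii) with $K_0:=K_{n_0}$. For (ii)(1), given $K\in\mathscr{K}(A)$ with $K_0\subseteq K$, pick $m$ with $K\subseteq K_m$; then $\homeo_{K_0}(X)\subseteq\homeo_K(X)\subseteq\homeo_{K_m}(X)$ are topological subgroups and $\homeo_{K_0}(X)$, being open in the largest, is open in the intermediate $\homeo_K(X)$. For (ii)(2), let $C\in\mathscr{K}(\overline{A\setminus K_0})$. Applying (ii)(1) to $C\cup K_0$, the group $\homeo_{K_0}(X)$ is open in $\homeo_{C\cup K_0}(X)$, so $\homeo_{K_0}(X)\cap\homeo_C(X)=\homeo_{C\cap K_0}(X)$ is open in $\homeo_C(X)$. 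The key point is that $C\cap K_0\subseteq K_0\cap\operatorname{cl}_X(A\setminus K_0)$ has empty interior in $X$: a nonempty open $W\subseteq K_0\cap\operatorname{cl}_X(A\setminus K_0)$ would lie in $K_0$ and yet, being open inside $\operatorname{cl}_X(A\setminus K_0)$, would meet $A\setminus K_0$, a contradiction. Since the moved-point set of a homeomorphism is open and an open set is contained in the interior of its closure, a homeomorphism supported in a set of empty interior is the identity; thus $\homeo_{C\cap K_0}(X)=\{\operatorname{id}\}$, and a group with an open trivial subgroup is discrete, giving (ii)(2).

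Case (a) should give conclusion (i). For compact $L\subseteq A$ one has $L\subseteq K_n$ for some $n$, and $\homeo_L(X)=\{h\in\homeo_{K_n}(X): h(x)=x\text{ for all }x\in K_n\setminus L\}$ is a closed subgroup of the locally compact group $\homeo_{K_n}(X)$, hence locally compact; this proves (i) for all compact $M\subseteq A$. The hard part will be upgrading this to arbitrary compact metrizable $M\subseteq X$: the closed-subgroup trick only transports local compactness \emph{downward} within $A$, and $X$ has no homogeneity to move the conclusion elsewhere. Here I would try to use ACP globally: given $M$ with $\homeo_M(X)$ hypothetically not locally compact, enlarge to a regular closed, $\sigma$-compact, non-compact, metrizable $A'\supseteq M$ (also meeting $A$ in a set of non-empty interior) and re-run the dichotomy on $A'$ — case (a$'$) would force $\homeo_M(X)$ locally compact, a contradiction, so one would be left to exclude case (b$'$) using that local compactness already holds on the overlap with $A$. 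Establishing the existence of such an $A'$ for a general locally compact Hausdorff $X$, and ruling out the competing case (b$'$), is precisely where I expect the genuine difficulty to concentrate.
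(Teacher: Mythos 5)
Your reduction to $A$ via Remark~\ref{remark:cts:acp}, the passage to a countable cofinal exhaustion $\{K_n\}$ of $\mathscr{K}(A)$, the appeal to the contrapositive of Theorem~\ref{thm:Yamasaki}, and your treatment of case (b) all match the paper's proof. In fact, your derivation of (ii)(2) --- that $\homeo_{K_0}(X)\cap\homeo_C(X)=\homeo_{C\cap K_0}(X)$ is trivial because $C\cap K_0$ has empty interior and the moved-point set of a homeomorphism is open --- is a clean alternative to the paper's argument, which instead embeds $\homeo_{K_0}(X)\times\homeo_C(X)$ into $\homeo_{K_0\cup C}(X)$ as commuting subgroups with trivial intersection. (One small repair in case (b): the statement wants $K_0$ regular closed \emph{in $X$}, so set $K_0:=\operatorname{cl}_X(\operatorname{int}_X K_{n_0})$ and observe $\homeo_{K_0}(X)=\homeo_{K_{n_0}}(X)$, again because moved-point sets are open.) However, the gap you flag in case (a) is genuine: local compactness of the groups $\homeo_{K_n}(X)$ yields conclusion (i) only for compact $M\subseteq A$, and nothing in that case controls compact metrizable sets elsewhere in $X$.

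The missing idea is structural rather than technical: do not try to extract the disjunction case-by-case from the dichotomy; prove the contrapositive $\neg(\mathrm{i})\Rightarrow(\mathrm{ii})$. This is exactly what the paper does. Assume there is a compact metrizable $M_0\subseteq X$ with $\homeo_{M_0}(X)$ not locally compact; replace $M_0$ by $\operatorname{cl}_X(\operatorname{int}_X M_0)$ (same homeomorphism group, now regular closed), and replace $A$ by $A\cup M_0$, which is still regular closed, $\sigma$-compact, non-compact and metrizable (both pieces are closed and second countable, so the union is a second-countable locally compact Hausdorff space). Build the exhaustion so that $M_0\subseteq K_n$ for every $n$. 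Then Yamasaki's alternative (a) is impossible, since $\homeo_{M_0}(X)$ is a closed --- hence locally compact --- subgroup of any locally compact $\homeo_{K_n}(X)$; so case (b) holds, and your own case-(b) argument finishes the proof. In particular, your plan of ``ruling out case (b$'$)'' after enlarging $A$ is misdirected: because the conclusion is a disjunction, a failure of (i) need not produce a contradiction, only conclusion (ii), and case (b$'$) is precisely what produces it. Likewise, the existence of the enlargement that worried you is trivial: $A':=A\cup M_0$ works, with no positioning requirements. (One caveat, which your version shares with the paper's own proof: the argument produces $K_0\subseteq A\cup M_0$ rather than $K_0\subseteq A$; the paper silently renames the enlarged set $A$, and the application deriving Theorem~\ref{thm:main:neg-homeo} is insensitive to this.)
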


\begin{proof}
We show that if (i) fails, then (ii) holds. To that end, let $M_0$ be a compact metrizable subset of $X$ such that $\homeo_{M_0}(X)$ is not locally compact. By replacing $M_0$ with $\operatorname{cl} (\operatorname{int} M_0)$ if necessary, we may assume that $M_0$ is regular closed. Furthermore, by replacing $A$ with $A \cup M_0$ if necessary, we may assume that $M_0 \subseteq A$.

Since $A$ is $\sigma$-compact, it contains a countable increasing family $\{K_n\}_{n=1}^\infty$ of compact subsets such that $A=\bigcup\limits_{n=1}^\infty K_n$. By enlarging the $K_n$ if necessary, we may assume that $M_0 \subseteq K_n \subseteq \operatorname{int}_A K_{n+1}$ for every $n$. Then $\{K_n\}_{n=1}^\infty$ is cofinal in $\mathscr{K}(A)$, that is, every compact subset of $A$ is contained in one of the $K_n$.

Since $\{\homeo_K (X)\}_{K \in\mathscr{K}(X)}$ satisfies ACP, by Remark~\ref{remark:cts:acp}, for every closed subgroup $H$ of $G:=\colim\limits_{K \in \mathscr{K}(X)} \homeo_K (X)$, one has
\begin{align} \label{neg-homeo:eq:H1}
H= \colim\limits_{K \in \mathscr{K}(X)} (\homeo_K (X)\cap H),
\end{align}
and so $\{\homeo_K (X) \cap H\}_{K \in\mathscr{K}(X)}$ satisfies ACP. In particular, if $A$ is a closed subspace of $X$ and $H$ is the (closed) subgroup of $G$ consisting of homeomorphisms whose support is contained in $A$, then $\homeo_K (X) \cap H = \homeo_{K\cap A} (X)$, and thus $\{\homeo_K (X)\}_{K \in\mathscr{K}(A)}$ satisfies ACP. By (\ref{neg-homeo:eq:H1}), 
\begin{align}
H= \colim\limits_{n} \homeo_{K_n} (X),
\end{align}
and in particular, the group multiplication is continuous in the colimit space topology on the right-hand side. Therefore, since each $\homeo_{K_n}(X)$ is metrizable, by Theorem~\ref{thm:Yamasaki},
\begin{enumerate}

\item
$\homeo_{K_n}(X)$ is locally compact for every $n$, or

\item
there is $n_0$ such that $\homeo_{K_{n_0}}(X)$ is an open subgroup of
$\homeo_{K_m}(X)$ for every $m > n_0$.

\end{enumerate}
By our assumption, $\homeo_{M_0}(X)$ is not locally compact, and $M_0 \subseteq K_n$ for every $n$. So, (a) is not possible, and (b) holds. Put $K_0:=\operatorname{cl}_X (\operatorname{int}_X K_{n_0})$. Observe that $\homeo_{K_0}(X) = \homeo_{K_{n_0}}(X)$. We show that (ii) holds for $K_0$.

To show (ii)(1), let $K\in \mathscr{K}(A)$ such that $K_0 \subseteq K$. Since $\{K_n\}_{n=1}^\infty$ is cofinal in $\mathscr{K}(A)$, there is $m > n_0$ such that $K \subseteq K_m$. Thus, $\homeo_{K_0}(X)$ is open in $\homeo_{K_m}(X)$, and in particular, $\homeo_{K_0}(X)$ is open in $\homeo_{K}(X)$. 

To show (ii)(2), let $C\in \mathscr{K}(\overline{A\backslash K_0})$, and put $L:=K_0 \cup C$. Since $\operatorname{int}_X K_0$ and $\operatorname{int}_X C$ are disjoint, $\homeo_{K_0}(X)$ commutes with $\homeo_{C}(X)$, and the multiplication map
\begin{align}
\homeo_{K_0}(X) \times \homeo_{C}(X) \rightarrow \homeo_L(X)
\end{align}
is an embedding of topological groups. By (ii)(1), $\homeo_{K_0}(X)$ is open in $\homeo_L(X)$. Thus, $\homeo_{K_0}(X)$ is open in $\homeo_{K_0}(X) \times \homeo_{C}(X)$, and therefore $\homeo_{C}(X)$ is discrete.
\end{proof}

We establish Theorem~\ref{thm:main:neg-homeo} by proving a stronger result.

\begin{Ltheorem*}[\ref{thm:main:neg-homeo}$\empty^\prime$]
Let $X$ be a locally compact Hausdorff space that is not compact. Each of the following statements imply the subsequent ones:

\begin{enumerate}[label={\rm(\roman*)}]

\item
$X$ is metrizable, locally Euclidean, and has no isolated points;

\item
$X$ is metrizable and $\homeo_{\overline U}(X)$ is not locally compact for every non-empty open set $U$ such that $\overline U$ is compact;

\item
$X$ is normal, locally metrizable, and contains

\begin{enumerate}[label={\rm (\arabic*)}]

\item
a compact subset $M_0$ such that $\homeo_{M_0}(X)$ is not locally compact; and

\item
an infinite discrete closed set $T$ such that for every  $t \in T$ and every neighborhood $V$ of $t$ with compact closure, $\homeo_{\overline V}(X)$ is not discrete;

\end{enumerate}

\item
$X$ contains 

\begin{enumerate}[label={\rm (\arabic*)}]

\item
a compact metrizable subset $M_0$ such that $\homeo_{M_0}(X)$ is not locally compact; and

\item
a regular closed, $\sigma$-compact, and metrizable subset $A$ whose interior contains an infinite discrete closed set $T$ such that for every $t \in T$ and every neighborhood $V$ of $t$ with compact closure, $\homeo_{\overline V}(X)$ is not discrete; 

\end{enumerate}

\item
$\{\homeo_K (X)\}_{K \in\mathscr{K}(X)}$ does not satisfy ACP.

\end{enumerate}
\end{Ltheorem*}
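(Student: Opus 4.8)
The plan is to prove the chain $\text{(i)}\Rightarrow\text{(ii)}\Rightarrow\text{(iii)}\Rightarrow\text{(iv)}\Rightarrow\text{(v)}$, with the decisive step being $\text{(iv)}\Rightarrow\text{(v)}$, for which Theorem~\ref{homeo:thm:neg-acp} does essentially all the work. The earlier implications are successive weakenings that extract, from increasingly abstract hypotheses, the two concrete features needed to invoke that theorem: a \emph{metrizable} compact set carrying a non-locally-compact homeomorphism group, and a metrizable regular closed $\sigma$-compact witness set $A$ whose interior contains points with non-discrete small homeomorphism groups.

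For $\text{(iv)}\Rightarrow\text{(v)}$ I argue by contraposition. First, the set $A$ of (iv)(2) is non-compact, since its interior contains the infinite closed discrete set $T$ and a compact set contains no infinite closed discrete subset; thus $A$ is regular closed, $\sigma$-compact, non-compact, and metrizable, so Theorem~\ref{homeo:thm:neg-acp} applies to it. If $\{\homeo_K(X)\}_{K\in\mathscr{K}(X)}$ satisfied ACP, then alternative (i) or (ii) of that theorem would hold. Alternative (i) is refuted at once by (iv)(1), which supplies a compact metrizable $M_0$ with $\homeo_{M_0}(X)$ not locally compact. To refute alternative (ii), let $K_0\subseteq A$ be the regular closed compact set it provides. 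Since $T\cap K_0$ is a closed discrete subset of the compact set $K_0$, it is finite, so I may pick $t\in T\setminus K_0$. As $t\in\operatorname{int}_X A$ and $t\notin K_0$ with $K_0$ closed, local compactness yields an open $V\ni t$ with $\overline V$ compact and $\overline V\subseteq(\operatorname{int}_X A)\setminus K_0\subseteq\overline{A\setminus K_0}$. Then $\overline V\in\mathscr{K}(\overline{A\setminus K_0})$, so (ii)(2) forces $\homeo_{\overline V}(X)$ to be discrete, contradicting the defining property of $T$ in (iv)(2). Hence neither alternative holds, and ACP fails.

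For $\text{(i)}\Rightarrow\text{(ii)}$, metrizability is inherited verbatim, so only non-local-compactness needs work: given a non-empty open $U$ with $\overline U$ compact, local Euclideanness together with the absence of isolated points produces a closed ball $\overline B\subseteq U$ homeomorphic to a Euclidean ball of dimension $\geq 1$. The subgroup $\homeo_{\overline B}(X)$ is closed in $\homeo_{\overline U}(X)$, being cut out by the closed conditions $h(x)=x$ for $x\notin\overline B$, and is topologically isomorphic to the compactly supported homeomorphism group of a Euclidean ball, which is not locally compact (no identity neighborhood is relatively compact, e.g.\ by failure of equicontinuity via Arzel\`a--Ascoli). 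Since a closed subgroup of a locally compact group is locally compact, $\homeo_{\overline U}(X)$ cannot be locally compact. For $\text{(ii)}\Rightarrow\text{(iii)}$, metrizable spaces are normal and locally metrizable; local compactness and non-compactness furnish an infinite closed discrete set $T$ (a sequence with no convergent subsequence), and (iii)(1) follows with $M_0=\overline U$ for any relatively compact open $U$. For (iii)(2), if $V$ is a neighborhood of $t$ with $\overline V$ compact, then $\homeo_{\overline{\operatorname{int} V}}(X)$ is a subgroup of $\homeo_{\overline V}(X)$ that is not locally compact by (ii), hence not discrete; as subgroups of discrete groups are discrete, $\homeo_{\overline V}(X)$ is not discrete either.

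For $\text{(iii)}\Rightarrow\text{(iv)}$, the compact $M_0$ of (iii)(1) is covered by finitely many metrizable charts, so it is a finite union of compact metrizable subspaces and therefore metrizable (a compact Hausdorff space with a countable network is metrizable), giving (iv)(1). For (iv)(2), using normality, local compactness, and local metrizability I separate $T$, passing to an infinite subset $T'=\{t_n\}$ if necessary, into a discrete family $\{V_n\}$ of pairwise disjoint open sets with compact metrizable closures; then $A:=\overline{\bigcup_n V_n}=\bigsqcup_n\overline{V_n}$ is regular closed (the closure of an open set), $\sigma$-compact, and metrizable (a discrete sum of compact metrizable spaces), its interior contains $T'$, and the required non-discreteness is inherited from (iii)(2). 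I expect the main obstacle to lie precisely here: arranging the neighborhoods of the closed discrete set $T$ into a genuinely \emph{discrete} (not merely pairwise disjoint) family in a space assumed only normal rather than collectionwise normal, which is exactly what makes $A$ simultaneously metrizable and $\sigma$-compact; this is where local compactness and local metrizability must be exploited carefully, and where passing to a subsequence of $T$ may be essential.
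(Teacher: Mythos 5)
Your route is the paper's: the chain (i)$\Rightarrow$(ii)$\Rightarrow$(iii)$\Rightarrow$(iv)$\Rightarrow$(v), with Theorem~\ref{homeo:thm:neg-acp} doing the real work in the last step. Your (i)$\Rightarrow$(ii), (ii)$\Rightarrow$(iii) and (iv)$\Rightarrow$(v) arguments are correct and essentially identical to the paper's (in (iv)$\Rightarrow$(v) you are in fact slightly more careful, noting explicitly that $A$ is non-compact because it contains the infinite closed discrete set $T$, and that $T\cap K_0$ is finite); your justification of (iv)(1) via countable networks differs harmlessly from the paper's one-liner that compact subsets of locally metrizable spaces are metrizable.

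The genuine gap is exactly the step you flag as the expected ``main obstacle'' and then leave unproven: in (iii)$\Rightarrow$(iv)(2), producing a \emph{discrete} (or at least locally finite) family of relatively compact open neighborhoods of the points of $T$. Without local finiteness, $\overline{\bigcup_n V_n}=\bigcup_n \overline{V_n}$ can fail and $A$ need be neither $\sigma$-compact nor metrizable, so the construction of $A$ is incomplete as written. The fix needs no collectionwise normality; plain normality applied once suffices, and no subsequence extraction is needed beyond making $T$ countable. Concretely (this is the paper's argument): without loss of generality $T=\{t_0,t_1,\ldots\}$ is countably infinite (every subset of a closed discrete set is closed and discrete). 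Inductively, using regularity and local compactness, choose open $U_i\ni t_i$ with $\overline{U_i}$ compact such that $\overline{U_i}$ misses the closed set $(T\setminus\{t_i\})\cup\overline{U_0}\cup\cdots\cup\overline{U_{i-1}}$; this makes the $\overline{U_i}$ pairwise disjoint and keeps every $t_j$ available for later steps. Now put $F:=X\setminus\bigcup_i U_i$, a closed set disjoint from $T$, and use normality to obtain disjoint open sets $U\supseteq T$ and $V\supseteq F$. Then the family $W_i:=U_i\cap U$ is automatically locally finite: a point of $V$ has the neighborhood $V$, which meets no $W_i$ (since $U\cap V=\emptyset$), while a point outside $V$ lies outside $F$, hence in some $U_j$, which meets no $W_i$ with $i\neq j$. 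Consequently $A:=\bigcup_i\overline{W_i}=\overline{\bigcup_i W_i}$ is regular closed and $\sigma$-compact, its interior contains $T$, and it is metrizable --- the paper deduces this from local metrizability plus Lindel\"of (Smirnov's theorem), which also validates your disjoint-sum description, since the pairwise disjoint, locally finite closures $\overline{W_i}$ are clopen in $A$.
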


\begin{proof}
(i) $\Rightarrow$ (ii): Let $U$ be a non-empty open set whose closure in $X$ is compact. Since $X$ is locally Euclidean, $U$ contains a non-empty open set $V$ such that there is  a homeomorphism $h\colon \mathbb{R}^n\rightarrow V$ for some $n\in \mathbb{N}$. Put $W:=h((-1,1)^n)$. Observe that $\overline{W}=h([-1,1]^n)$ and one has $\homeo_{\overline W}(X) \cong \homeo_{[-1,1]^n}(\mathbb{R}^n)$.
The group $\homeo_{[-1,1]^n}(\mathbb{R}^n)$ is not locally compact, because it contains a copy of $\homeo_{[-1,1]}(\mathbb{R})$ as a closed subgroup, and the latter is known to be non-locally compact (it is not even complete with respect to its left uniformity; see, for example \cite{dieudonne1944}). Since  $\homeo_{\overline W}(X) \subseteq \homeo_{\overline U}(X)$, it follows that $\homeo_{\overline U}(X)$ is not locally compact.

(ii) $\Rightarrow$ (iii): Since $X$ is metrizable but not compact, it contains an infinite closed discrete set $T$. 

(iii) $\Rightarrow$ (iv): Since every compact subset of a locally metrizable space is metrizable, (iii)(1) implies (iv)(1). Without loss of generality, we may assume that $T=\{t_0,t_1,\ldots\}$ is countably infinite. In order to show (iv)(2), we construct the set $A$ around $T$.

Let $\{U_i\}_{i \in \mathbb{N}}$ be a family of open sets in $X$ such that $t_i \in U_i$  and $\overline U_i$ is compact for every $i\in \mathbb{N}$, and $\overline U_i \cap \overline U_j = \emptyset$ for $i\neq j$. (Such a family exists, because $T$ is discrete and countable.) Put $F:=X \backslash \bigcup\limits_{i\in\mathbb{N}} U_i$. Since $F$ and $T$ are closed disjoint sets in the normal space $X$, there are disjoint open sets $U$ and $V$ such that $T\subseteq U$ and $F \subseteq V$. Put $W_i:=U_i \cap U$. Then $t_i \in W_i$ and $\overline{W_i}$ is compact, and $\{W_i\}_{i \in \mathbb{N}}$ is locally finite. Thus, 
\begin{align}
A=\bigcup\limits_{i\in\mathbb{N}} \overline{W_i} = \overline{\bigcup\limits_{i\in\mathbb{N}} W_i}
\end{align}
is regular closed (\cite[1.1.11]{Engel}), $\sigma$-compact, and its interior contains $T$. Finally, $A$ is metrizable, because it is locally metrizable and Lindel\"of.

(iv) $\Rightarrow$ (v): Assume that  $\{\homeo_K (X)\}_{K \in\mathscr{K}(X)}$ satisfies ACP. Then, by Theorem~\ref{homeo:thm:neg-acp}, either $\homeo_{M_0}(X)$ is locally compact, or $A$ contains a (regular) closed compact subset $K_0$ such that $\homeo_C(X)$ is discrete for every $C \in \mathscr{K}(\overline{A\backslash K_0})$. By (iv)(1), the first case is not possible, so suppose that the second case holds. Since $T$ is infinite closed and discrete, $T \not\subseteq K_0$. Thus, there is $t_0 \in T$ such that $t_0 \notin K_0$.  Let $V$ be a neighborhood of $t_0$ such that $V \subseteq A\backslash K_0$ and $\operatorname{cl}_X{V}$ is compact, and put $C:= \operatorname{cl}_X{V}$. Then, by (iv)(2),  $\homeo_C(X)$ is not discrete. This contradiction shows that $\{\homeo_K (X)\}_{K \in\mathscr{K}(X)}$ does not satisfy ACP.
\end{proof}

\section{The Compactly Supported Homeomorphism Property (CSHP)}

\label{sect:cshp}

Our goal in this section is to prove  Theorem~\ref{thm:main:homeo}.

\begin{Ltheorem*}[\ref{thm:main:homeo}]
Suppose that
\begin{enumerate}

\item
$X=\mathbb{L}_{\geq 0}$; or

\item
$X=\mathbb{L}$; or

\item
$X = \kappa^n \times \lambda_1 \times \cdots \times \lambda_j$, where $\kappa$ is an uncountable regular cardinal, $\lambda_1,\ldots,\lambda_j$ are successor ordinals smaller than $\kappa$, and $n,j \in \mathbb{N}$.

\end{enumerate}

\noindent
Then $X$ has CSHP.
\end{Ltheorem*}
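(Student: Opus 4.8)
The plan is to identify the compact-open topology $\mathscr{S}$ on $\homeo_{cpt}(X)$ (the one induced from $\beta X$) as an admissible topology on the union $\bigcup_{K\in\mathscr{K}(X)}\homeo_K(X)$ and then to prove that $(\homeo_{cpt}(X),\mathscr{S})$ is countably tight. Each of the three spaces is a long space, so the index $\mathscr{K}(X)$ is long, i.e.\ $\aleph_0$-long. Hence Proposition~\ref{cofin:prop:admissible} with $\kappa=\aleph_0$ will force $\mathscr{S}$ to coincide with the colimit space topology $\mathscr{T}$, which is exactly the statement that $X$ has CSHP. Admissibility is essentially built into the setup, since the subspace topology that $\mathscr{S}$ induces on each $\homeo_K(X)=\homeo_K(\beta X)$ is its own compact-open topology, so every inclusion $\homeo_K(X)\to(\homeo_{cpt}(X),\mathscr{S})$ is an embedding. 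Thus the whole theorem reduces to a single countable-tightness statement, and the three cases differ only in how that statement is verified.

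For the countable-tightness step I would work with a fixed $h\in\operatorname{cl}_{\mathscr{S}}(S)$, whose support lies in some $K\in\mathscr{K}(X)$, and produce a countable $C\subseteq S$ with $h\in\operatorname{cl}_{\mathscr{S}}(C)$. The approach is to describe the compact-open neighborhood filter of $h$ in terms of $\beta X$: a basic neighborhood is cut out by finitely many pairs $(C_i,V_i)$ of compact and open subsets of $\beta X$. For cases (a) and (b) the remainder $\beta X\setminus X$ is a single point, because every continuous real-valued function on $\mathbb{L}_{\geq 0}$ is eventually constant, so $\beta\mathbb{L}_{\geq 0}$ is the one-point compactification; the compact subsets of $X$ are precisely the bounded ones, and $\mathscr{K}(X)$ is cofinally indexed by $\omega_1$. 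I would then exploit the regularity of $\omega_1$ together with a pressing-down argument to collapse the a~priori $\omega_1$-many neighborhood conditions down to countably many, and use this to select the countable witnessing set $C$.

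For case (c) I would argue by induction on the number $n$ of long coordinates $\kappa$, treating the successor factors $\lambda_1,\dots,\lambda_j$ as contributing only compact, metrizable data. The base case $n=0$ is compact and immediate. In the inductive step I would use the regularity of $\kappa$ and a Fodor-type pressing-down argument to localize the supports of the approximating homeomorphisms along the $\kappa$-directions into a countable cofinal skeleton, reducing an $n$-coordinate long problem to an $(n-1)$-coordinate one fibred over a metrizable piece. Maintaining compatibility of supports across the several long coordinates—so that the countably many chosen approximants simultaneously witness the closure in every $\kappa$-direction—is the combinatorial core of this case.

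The hard part throughout is the countable-tightness step, and specifically controlling the approximants near the remainder $\beta X\setminus X$, where $\beta X$ carries points of uncountable character and $\mathscr{S}$ is far from first countable. Tightness therefore cannot be read off from any countable neighborhood base and must instead be wrung out of the regular-cardinal combinatorics of $X$ via pressing-down. In case (c) the presence of several long coordinates compounds the difficulty, since one must press down in each $\kappa$-factor at once while keeping the finite and successor factors under metric control; this is where I expect the inductive bookkeeping to be most delicate and where the proof will be carried.
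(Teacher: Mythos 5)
Your outline for cases (a) and (b) is essentially the paper's own argument: the compact-open topology $\mathscr{S}$ induced by $\beta X$ is admissible, one proves countable tightness of $(\homeo_{cpt}(X),\mathscr{S})$, and Proposition~\ref{cofin:prop:admissible} (with tightness parameter $\aleph_0$) forces $\mathscr{S}$ to equal the colimit space topology. One factual slip: the remainder $\beta\mathbb{L}\setminus\mathbb{L}$ is not a single point; $\beta\mathbb{L}=\mathbb{L}\cup\{-\infty,\infty\}$, and the paper has to do genuine (if routine) work in case (b) to show that the $\beta\mathbb{L}$-induced and $\mathbb{L}$-induced compact-open topologies coincide, by a case analysis on connected compact sets meeting the remainder. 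That is repairable.

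The genuine gap is case (c). You reduce the whole theorem to the claim that $(\homeo_{cpt}(X),\mathscr{S})$ is countably tight, but for $\kappa>\omega_1$ this claim is provably false, so no pressing-down induction can establish it. Tightness is inherited by subspaces, and by admissibility each $\homeo_K(X)$ is a subspace of $(\homeo_{cpt}(X),\mathscr{S})$; yet the paper exhibits compact $K$ for which $\homeo_K(X)$ has uncountable tightness: take $\lambda<\kappa$ a limit ordinal of uncountable cofinality (e.g.\ $\lambda=\omega_1$, $\kappa=\omega_2$), so that $K=[0,\lambda]$ is compact open in $\kappa$, $\homeo_{[0,\lambda]}(\kappa)=\homeo([0,\lambda])$, and the set $A=\bigcup_{\beta<\lambda}\homeo_{[0,\beta]}([0,\lambda])$ witnesses $t(\homeo([0,\lambda]))\geq\operatorname{cf}(\lambda)=\omega_1$. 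Consequently no admissible topology on $\homeo_{cpt}(\kappa^n)$ can be countably tight, and Proposition~\ref{cofin:prop:admissible} with parameter $\aleph_0$ is inapplicable; this is exactly why the paper abandons the tightness method for part (c). The paper's actual proof is structured differently: it first shows CSHP passes to clopen subsets, reducing to $X=\kappa^n$ (Lemma~\ref{cshp:lemma:clopen} and Corollary~\ref{cshp:cor:clopen}); uses zero-dimensionality to obtain a neighborhood base at the identity consisting of the subgroups $U(\mathcal{A})$ for finite clopen partitions $\mathcal{A}$ (Lemma~\ref{cshp:lemma:0dim}); proves that every finite clopen partition of a product of ordinals of uncountable cofinality is refined by a grid partition (Theorem~\ref{cshp:thm:grid}, via Glicksberg's theorem $\beta\bigl(\prod\lambda_i\bigr)=\prod\beta\lambda_i$); stabilizes the \emph{cardinalities} of the grid sets using longness (Proposition~\ref{cshp:prop:stabilize}); and only then invokes Fodor's Pressing Down Lemma --- not to prove tightness, but to stabilize the grid sets themselves along the diagonal on a stationary set $T\subseteq\kappa$, which yields directly that every colimit-open neighborhood of the identity contains a set $U\bigl(\mathcal{G}_{\{F_i\}_{i=1}^n}^{X}\bigr)$ open in the compact-open topology. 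Your instinct that Fodor's lemma is the combinatorial engine of case (c) is correct, but it must be used for this direct comparison of the two topologies, not in service of a tightness statement that fails.
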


We prove  parts (a) and (b) of Theorem~\ref{thm:main:homeo} by showing that the compact-open topology on $\homeo_{cpt}(\mathbb{L})$ is countably tight and coincides with the compact-open topology induced by $\beta X$, and then applying Proposition~\ref{cofin:prop:admissible}.

\begin{proposition} \label{cshp:prop:longlineray}
Let $X=\mathbb{L}_{\geq 0}$ or $X=\mathbb{L}$. Then
$\homeo_{cpt}(X)$ equipped with the compact-open topology is countably tight.
\end{proposition}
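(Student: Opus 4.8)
The plan is to prove that $\homeo_{cpt}(X)$ is countably tight for $X = \mathbb{L}_{\geq 0}$ or $X = \mathbb{L}$ by exploiting the key structural feature of the long ray/line: although the space itself is not $\sigma$-compact, every countable subset of it has compact closure (both are long spaces). The strategy is to show that any point $h$ in the closure of a subset $\mathscr{F} \subseteq \homeo_{cpt}(X)$ already lies in the closure of a \emph{countable} subfamily of $\mathscr{F}$. Concretely, given $h \in \operatorname{cl}\mathscr{F}$, I would produce a countable $\mathscr{C} \subseteq \mathscr{F}$ with $h \in \operatorname{cl}\mathscr{C}$ by a diagonal/exhaustion argument against a countable neighborhood base of $h$.

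First I would set up the countable neighborhood structure. The compact-open topology on $\homeo(\beta X)$ has subbasic neighborhoods of $h$ determined by pairs $(C, U)$ with $C$ compact and $U$ open; to get countable tightness it suffices to find, for each basic neighborhood $W$ of $h$, a single witness $f_W \in \mathscr{F} \cap W$, and then argue that the countably many witnesses needed are governed by a countable amount of data. The crucial point is that each $f \in \homeo_{cpt}(X)$ has compact support, and in the long ray every compact set is contained in some initial segment $[0,\beta]$ with $\beta < \omega_1$; thus $h$ itself has support inside some $[0,\beta_0]$. I would then argue that neighborhoods of $h$ in the compact-open topology can be detected using only compact sets and open sets ``below'' a fixed countable level together with the behavior near the single end (for $\mathbb{L}_{\geq 0}$) or two ends (for $\mathbb{L}$) of the space, where every map is eventually the identity.

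The heart of the argument is to reduce the verification of $h \in \operatorname{cl}\mathscr{C}$ to countably many conditions. I would fix a countable basis of neighborhoods for the relevant compact initial segment $[0,\beta_0] \supseteq \operatorname{supp}(h)$ (this segment, being homeomorphic to a compact ordinal or a compact interval, is metrizable, hence has such a countable basis for the induced topology), choose for each basic neighborhood $W_n$ of $h$ a witness $f_n \in \mathscr{F} \cap W_n$, set $\mathscr{C} = \{f_n\}_{n \in \mathbb{N}}$, and verify $h \in \operatorname{cl}\mathscr{C}$. The essential obstacle — and the step I expect to be the hardest — is handling the interaction between the compact-open topology \emph{induced from $\beta X$} and the supports of the $f_n$: one must ensure that the countably many witnesses $f_n$ have supports contained in a \emph{single} compact set, so that their common behavior ``at infinity'' is controlled. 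This is exactly where longness is indispensable: the union $\bigcup_n \operatorname{supp}(f_n)$ is a countable union of compact sets, hence contained in some compact $[0,\gamma]$ with $\gamma < \omega_1$ (for $\mathbb{L}_{\geq 0}$), so all the $f_n$ and $h$ live in the metrizable group $\homeo_{[0,\gamma]}(X)$. Countable tightness of $\homeo_{cpt}(X)$ then follows from the countable tightness (indeed first countability) of this metrizable subgroup, since a point witnessed countably in a metrizable subspace is witnessed there.

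To finish, I would assemble these pieces: given $h \in \operatorname{cl}_{\homeo_{cpt}(X)}\mathscr{F}$, longness guarantees $h$ and a suitable exhausting sequence of witnesses all lie in a common metrizable $\homeo_{[0,\gamma]}(X)$, within which first countability yields a countable $\mathscr{C} \subseteq \mathscr{F}$ with $h \in \operatorname{cl}\mathscr{C}$. The case $X = \mathbb{L}$ is handled identically, working with the two ends of the long line symmetrically and noting that any compact subset of $\mathbb{L}$ is contained in a compact interval $[-\beta, \beta]$ homeomorphic to a compact metrizable interval. The main care required is confirming that the compact-open topology on $\homeo_{cpt}(X)$ induced by $\beta X$ agrees, on each such metrizable subgroup $\homeo_{[0,\gamma]}(X)$, with the intrinsic compact-open topology of $\homeo_{[0,\gamma]}(X)$ — which holds because $\homeo_K(X) = \homeo_K(\beta X)$ as topological groups for every compact $K$, as noted in the setup preceding the definition of CSHP.
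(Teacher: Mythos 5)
Your overall skeleton (extract a countable set of witnesses, use longness to trap their supports in a single compact set, finish inside a metrizable subgroup) matches the shape of the paper's proof, but there is a genuine gap at the heart of your argument. You choose the witnesses $f_n$ only against neighborhoods of $h$ determined by data on the \emph{fixed} compact set $[0,\beta_0]\supseteq\operatorname{supp}(h)$, and only afterwards let $\gamma$ be determined by the supports of the chosen witnesses. Closeness to $h$ on $[0,\beta_0]$ says nothing about closeness to $h$ on $[0,\gamma]$, so your final step (``first countability of $\homeo_{[0,\gamma]}(X)$ yields $h\in\operatorname{cl}\mathscr{C}$'') does not follow: you never arranged the $f_n$ to be dense at $h$ with respect to the uniform metric of $\homeo_{[0,\gamma]}(X)$, and you cannot, because $\gamma$ is only known after the $f_n$ are chosen. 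Concretely, take $h=\operatorname{id}$ on $X=\mathbb{L}_{\geq 0}$, let $g_\alpha$ be a fixed nontrivial bump supported in $[\alpha,\alpha+2]$, let $f_k\to\operatorname{id}$ uniformly with supports in $[0,\beta_0]$, and let $\mathscr{F}=\{f_k\circ g_\alpha \mid k\in\mathbb{N},\ \alpha>\beta_0\}$. Then $\operatorname{id}\in\operatorname{cl}\mathscr{F}$ (given any compact-open neighborhood, push the bump beyond all of its compact sets and take $k$ large), and each of your neighborhoods $W_n$, which only constrain behavior on $[0,\beta_0]$, is satisfied by elements $f_k\circ g_{\beta_0+1}$ using one and the \emph{same} bump; but $\operatorname{id}$ is not in the closure of $\{f_{k_n}\circ g_{\beta_0+1}\}_n$, since every such map sends a fixed point $p>\beta_0$ to the same $q\neq p$. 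Your claim that neighborhoods of $h$ ``can be detected using only compact sets and open sets below a fixed countable level together with the behavior near the ends'' is precisely what fails here: $\homeo_{cpt}(X)$ is not first countable, for exactly this reason.

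What is missing are the two ideas the paper uses to break this circularity. First, the compact sets against which witnesses are chosen must grow in response to the witnesses already chosen: the paper picks, for \emph{every} $\alpha<\omega_1$, a sequence of witnesses whose restrictions converge uniformly to the identity on $K_\alpha=[-(\alpha,0),(\alpha,0)]$ (this extraction, via metrizability of $\mathscr{C}(K_\alpha,U_\alpha)$, is the one part of your plan that is sound), and then recursively builds $\alpha_0<\alpha_1<\cdots$ so that the supports of all witnesses at level $\alpha_m$ lie inside $K_{\alpha_{m+1}}$; the countable set $C$ consists of the witnesses at \emph{all} levels $\alpha_m$, and $\gamma=\sup_m\alpha_m$. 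Second, even with this construction, a witness at level $\alpha_m$ is controlled only on $K_{\alpha_m}$, not on $K_\gamma$; the paper closes that remaining gap using the fact that compactly supported homeomorphisms of $\mathbb{L}$ are order-preserving: transporting $K_\gamma$ to $[-1,1]$ by a homeomorphism $\varphi$, a monotone map supported in $K_\gamma$ that is within $\varepsilon/2$ of the identity on $K_{\alpha_m}$, where $\varphi(K_{\alpha_m})$ exhausts $[-1,1]$ up to end pieces of length less than $\varepsilon/2$, is automatically within $\varepsilon$ of the identity on all of $K_\gamma$. This monotonicity argument is special to the long ray and long line and appears nowhere in your proposal; without it (or a substitute), trapping the supports in a common compact set does not let you conclude.
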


\begin{proof}
The statement for $X=\mathbb{L}_{\geq 0}$ follows from the statement for the case $X=\mathbb{L}$, because
$\homeo_{cpt}(\mathbb{L}_{\geq 0})$ equipped with the compact-open topology is a subspace of $\homeo_{cpt}(\mathbb{L})$ equipped with the compact-open topology.

Let $X=\mathbb{L}$,  $A\subseteq \homeo_{cpt}(\mathbb{L})$, and suppose that $f_0 \in \operatorname{cl} A$. We construct a countable subset $C\subseteq A$ such that $f_0\in \operatorname{cl} C$. Since composition is continuous in the compact-open topology, without loss of generality, we may assume that $f_0=\operatorname{id}_\mathbb{L}$.

For $\alpha < \omega_1$, put $K_\alpha=[-(\alpha,0), (\alpha,0)]$ and $U_\alpha=(-(\alpha+1,0), (\alpha+1,0))$. The set
\begin{align}
    W_\alpha:=\{h \in \homeo_{cpt}(\mathbb{L}) \mid h(K_\alpha) \subseteq U_\alpha\}
\end{align}
is an open neighborhood of $\operatorname{id}_\mathbb{L}$ in the compact open topology, and thus $\operatorname{id}_\mathbb{L} \in \operatorname{cl} (W_\alpha \cap A)$. Since $K_\alpha$ is compact and $U_\alpha$ is metrizable, the space $\mathscr{C}(K_\alpha,U_\alpha)$ is metrizable in the compact-open topology. Consequently, there is a sequence $\{f_n^{(\alpha)}\} \subseteq W_\alpha \cap A$ such that 
$f_{n|K_\alpha}^{(\alpha)} \longrightarrow \operatorname{id}_{K_\alpha}$ uniformly.

We construct now an increasing sequence of ordinals $\{\alpha_m\} \subseteq \omega_1$ such that $\operatorname{supp} f_{n}^{(\alpha_m)} \subseteq K_{\alpha_{m+1}}$ for every $n,m \in \mathbb{N}$. Pick an arbitrary $\alpha_0 < \omega_1$. Suppose that $\alpha_m$ has already been constructed. Since $\mathbb{L}$ is a long space, the countable family of compact subsets $\{\operatorname{supp} f_{n}^{(\alpha_m)}\}_{n\in \mathbb{N}}$ has an upper bound; in particular, there is $\beta < \omega_1$ such that $\operatorname{supp} f_{n}^{(\alpha_m)} \subseteq K_\beta$ for every $n\in \mathbb{N}$. Put $\alpha_{m+1}: = \max(\beta,\alpha_m+1)$.

Since $(\omega_1,\leq)$ is long, $\gamma:=\sup\limits_{m\in\mathbb{N}} \alpha_m$ exists in $\omega_1$, and $\operatorname{supp} f_{n}^{(\alpha_m)} \subseteq K_\gamma$ for every $n,m \in \mathbb{N}$. Put
\begin{align}
    C:=\{f_{n}^{(\alpha_m)}\mid n,m \in \mathbb{N}\} \subseteq A.
\end{align}
We show that $\operatorname{id}_\mathbb{L}\in \operatorname{cl} C$. Observe that $C\subseteq \homeo_{K_\gamma} (\mathbb{L})$. Pick an order-preserving homeomorphism $\varphi\colon K_\gamma \rightarrow [-1,1]$.
It suffices to show that $\operatorname{id}_{[-1,1]} \in \operatorname{cl}(\varphi C  \varphi^{-1})$ in $\homeo([-1,1])$.


Let $\varepsilon > 0$. There is $m \in \mathbb{N}$ such that $a:=\varphi(-(\alpha_m,0)) < -1 + \frac \varepsilon 2$ and $b:=\varphi(\alpha_m,0) > 1 - \frac \varepsilon 2$. By our construction, $f_{n|K_{\alpha_m}}^{(\alpha_m)} \longrightarrow \operatorname{id}_{K_{\alpha_m}}$ uniformly, and so $\varphi f_{n|K_{\alpha_m}}^{(\alpha_m)} \varphi^{-1} \longrightarrow \operatorname{id}_{[a,b]}$ uniformly. Thus, there is $n \in \mathbb{N}$ such that $| \varphi f_{n|K_{\alpha_m}}^{(\alpha_m)} \varphi^{-1} (x) -x | < \frac \varepsilon 2$ for every  $x \in [a,b]$. Since every compactly supported homeomorphism of $\mathbb{L}$ is order-preserving, for every $x \in [b,1]$, one has
\begin{align}
    1 \geq \varphi f_{n|K_{\gamma}}^{(\alpha_m)} \varphi^{-1} (x) \geq  \varphi f_{n|K_{\gamma}}^{(\alpha_m)} \varphi^{-1} (b) > b - \frac \varepsilon 2 > 1 - \varepsilon,
\end{align}
and similarly, for $x\in [-1,a]$, one has $-1 \leq f_{n|K_{\gamma}}^{(\alpha_m)} \varphi^{-1} (x) < -1 + \varepsilon$. Hence, $| \varphi f_{n|K_{\alpha_m}}^{(\gamma)} \varphi^{-1} (x) -x | < \varepsilon$ for every $x \in [-1,1]$. 
\end{proof}


\begin{proof}[Proof of Theorem~\ref{thm:main:homeo}(a)-(b).]
Let $X=\mathbb{L}_{\geq 0}$ or $X=\mathbb{L}$, and let $\alpha X$ denote its one-point compactification. Since $X$ is locally connected, the compact-open topology on $\homeo(X)$, and thus on $\homeo_{cpt}(X)$, coincides with the compact-open topology induced by $\alpha X$  (\cite[Theorems~1 and~4]{Arens}). The compact-open topology is admissible with respect to the family $\{\homeo_K(X)\}_{K\in\mathscr{K}(X)}$, and  by Proposition~\ref{cshp:prop:longlineray}, $\homeo_{cpt}(X)$ is countably tight in the compact-open topology. Thus, by Proposition~\ref{cofin:prop:admissible},
\begin{align}
\colim\limits_{K \in \mathscr{K}(X)} \homeo_K(X) = \homeo_{cpt}(X),
\end{align}
where the left-hand side is equipped with the colimit space topology and the right-hand side is equipped with the compact-open topology. Since $\homeo_{cpt}(X)$ is a topological group, this shows that the family $\{\homeo_K(X)\}_{K\in\mathscr{K}(X)}$ satisfies ACP. Therefore, it remains to show that the compact-open topology on $\homeo_{cpt}(X)$  coincides  with the compact-open topology induced by $\beta X$.

(a) Since $\beta \mathbb{L}_{\geq 0} = \alpha \mathbb{L}_{\geq 0}$ (\cite[16H6]{GilJer}), the statement follows for $X=\mathbb{L}_{\geq 0}$.

(b) It is easily seen that $\beta \mathbb{L} = \mathbb{L} \cup \{-\infty, \infty\}$. (Since $\mathbb{L}$ is a pushout of 
two copies of $\mathbb{L}_{\geq 0}$ over $\{0\}$ and $\beta$ preserves pushouts, $\beta \mathbb{L}$ is the pushout of two copies of $\beta \mathbb{L}_{\geq 0} = \alpha \mathbb{L}_{\geq 0}$ over $\{0\}$.) Since $\mathbb{L}$ is open in $\beta \mathbb{L}$, the compact-open topology on $\homeo_{cpt}(\mathbb{L})$ (induced by $\mathbb{L}$) is coarser than the compact-open topology induced by $\beta X$. In order to prove the converse, let $K \subseteq U \subseteq \beta \mathbb{L}$, where $K$ is compact and $U$ is open, and consider the subbasic neighborhood of the identity
\begin{align}
    W=\{h \in \homeo_{cpt}( \mathbb{L}) \mid h^\beta (K) \subseteq U \},
\end{align}
where $h^\beta\colon \beta \mathbb{L} \rightarrow \beta \mathbb{L}$ is the extension of $h$. Since $K$ is compact, it may be covered by finitely many connected closed subsets $I_1,\ldots,I_n$ of  $\beta \mathbb{L}$ such that $I_j \subseteq U$. For $j=1,\ldots, n$, put
\begin{align}
    W_j:=\{h \in \homeo_{cpt}( \mathbb{L}) \mid h^\beta (I_j ) \subseteq U \}.
\end{align}
One has $\bigcap\limits_{j=1}^n W_j \subseteq W$, and so without loss of generality we may assume that $K$ is connected. We distinguish among three cases.

{\itshape Case 1.} If $\{-\infty,\infty\} \subseteq K$, then $K=U=\beta \mathbb{L}$, and thus $W= \homeo_{cpt}( \mathbb{L})$ is open.

{\itshape Case 2.} If $K \subseteq \mathbb{L}$, then by replacing $U$ with $U \cap \mathbb{L}$, one sees that $W$ is open in the compact-open topology. (Compactly supported homeomorphisms of $\mathbb{L}$ preserve the order, and thus their extensions to $\beta \mathbb{L}$ fix $\infty$ and $-\infty$.)

{\itshape Case 3.} If $|K \cap \{-\infty,\infty\}|=1$, then without loss of generality we may assume that $\infty \in K$ and
$-\infty \notin K$. There are closed connected sets $K_1,K_2\subseteq K$ such that $K=K_1\cup K_2$, 
$K_1 \subseteq \mathbb{L}_{\geq 0} \backslash \{0\}$, and $\infty \notin K_2$. 
Put $U_1:=U\cap \mathbb{L}_{\geq 0} \backslash \{0\}$ and $U_2=U$, and for $i=1,2$, put
\begin{align}
    W_i^\prime:=\{h \in \homeo_{cpt}( \mathbb{L}) \mid h^\beta (K_i) \subseteq U_i \}.
\end{align}
One has $W_1^\prime \cap W_2^\prime \subseteq W$. By Case 2,  $W_2^\prime$ is open in the compact-open topology. Thus, it remains to 
show that $W_1^\prime$ is a neighborhood of the identity in the compact-open topology. Put $K^\prime \! :=K_1 \cup (-K_1)$ and $U^\prime=U_1 \cup (-U_1)$, where $-$ stands for the involution interchanging the two copies of $\alpha \mathbb{L}_{\geq 0}$ in $\beta \mathbb{L}$, and put
\begin{align}
    W^\prime:=\{h \in \homeo_{cpt}( \mathbb{L}) \mid h^\beta (K^\prime) \subseteq U^\prime \}.
\end{align}
If $h \in W^\prime$, then $h^\beta(K_1) \subseteq U_1 \cup (-U_1)$, and the latter union is disjoint, because $U_1\subseteq \mathbb{L}_{\geq 0} \backslash \{0\}$. Thus, the connected set $h^\beta(K_1)$ is contained in $U_1$ in its entirety, because
$h^\beta(\infty) \in h^\beta(K_1) \in U_1$. Therefore, $h \in W^\prime$. This shows that $W^\prime \subseteq W_1^\prime$. 
For $V=\beta \mathbb{L}\backslash K$ and $C=\beta \mathbb{L} \backslash U$, one has
\begin{align}
    W^\prime = \{h \in \homeo_{cpt}( \mathbb{L}) \mid h^\beta (V) \supseteq C \} = 
    \{h^{-1} \in \homeo_{cpt}( \mathbb{L}) \mid h^\beta (C) \subseteq V \}.
\end{align}
Since $C,V\subseteq \mathbb{L}$ and the compact-open topology on $\homeo_{cpt}( \mathbb{L})$ is a group topology, this proves that $W^\prime$ is open in the compact-open topology. 
\end{proof}

Similarly to the proofs found in Section \ref{sect:cofin} and the proof of Theorem~\ref{thm:main:cts}$\empty^\prime$, the proofs of parts (a) and (b) of Theorem~\ref{thm:main:homeo} rely on the tightness of the groups involved.
Unfortunately, this approach cannot be used to prove part (c) of 
Theorem~\ref{thm:main:homeo}, because 
$\homeo_K(X)$ need not be countably tight for a big space $X$.

\begin{example}
Let $\kappa$ be an ordinal, and  $\lambda < \kappa$ a limit ordinal of uncountable cofinality (for example, $\lambda=\omega_1$ and $\kappa=\omega_2$). The set $[0,\lambda]=[0,\lambda+1)$ is clopen in $X=\kappa$, and thus  
\begin{align}
\homeo_{[0,\lambda]}(\kappa) = \homeo([0,\lambda]),
\end{align}
where the latter is equipped with the compact-open topology. We show that 
\begin{align}
t(\homeo([0,\lambda])) \geq \operatorname{cf}(\lambda).
\end{align}
Consider the subset $A:=\bigcup\limits_{\beta < \lambda} \homeo_{[0,\beta]}([0,\lambda])$ of $\homeo([0,\lambda])$. If $B\subseteq A$ such that $|B|<  \operatorname{cf}(\lambda)$, then $\tau:=\sup \bigcup\limits_{f\in B} \operatorname{supp}(f) < \lambda$, and thus $B \subseteq \homeo_{[0,\tau]}([0,\lambda])$; in particular,  $\overline B \subseteq A$. On the other hand, for $h\colon [0,\lambda] \rightarrow [0,\lambda]$ defined by $h(\alpha+1)=h(\alpha+2)$ and $h(\alpha+2)=\alpha+1$ for limit ordinals $\alpha$ and $h(x)=x$ otherwise, one can easily see that $h\notin A$ but $h\in \overline A$.
(For every limit ordinal $\beta<\lambda$, put $h_\beta(x)=h(x)$ if $x \leq \beta$ and $h_\beta(x)=x$
if $x > \beta$. Clearly, $h_\beta\in A$, and $h_\beta \rightarrow h$ in the uniform topology, which coincides with the compact-open topology.)
\end{example}

The proof of part (c) of Theorem~\ref{thm:main:homeo} consists of a number of steps. We first show that CSHP is inherited by clopen subsets, and reduce the proof to the special case of $X=\kappa^n$. We then provide a description of the topology of $\homeo(K)$ for a compact zero-dimensional space $K$, and use it to describe open neighborhoods of the identity in $\homeo_{K} (\kappa^n)$.

\begin{lemma} \label{cshp:lemma:clopen}
Let $X$ be a Tychonoff space and $A\subseteq X$ a clopen subset. 

\begin{enumerate}
    \item 
    $\homeo_{cpt}(A)$ naturally embeds as a closed topological subgroup into $\homeo_{cpt}(X)$; and
    
    \item
    if $X$ has CSHP, then so does $A$.
\end{enumerate}
\end{lemma}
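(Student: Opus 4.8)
The plan is to prove the two parts of Lemma~\ref{cshp:lemma:clopen} in sequence, using the fact that $A$ is clopen to freely pass homeomorphisms back and forth between $A$ and $X$ by extending them with the identity.

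For part~(a), the natural map sends $h \in \homeo_{cpt}(A)$ to the homeomorphism $\widehat h$ of $X$ that agrees with $h$ on $A$ and is the identity on $X \setminus A$. Since $A$ is clopen, $\widehat h$ is continuous (it is continuous on each piece of the clopen partition $\{A, X\setminus A\}$), and it is a homeomorphism with inverse $\widehat{h^{-1}}$; its support equals the support of $h$, which is compact, so $\widehat h \in \homeo_{cpt}(X)$. This assignment is clearly an injective group homomorphism. To see that it is a topological embedding, I would compare the compact-open topologies induced by $\beta A$ and $\beta X$: because $A$ is clopen in $X$, the closure $\operatorname{cl}_{\beta X} A$ is a clopen copy of $\beta A$ inside $\beta X$, and every compactly supported homeomorphism of $X$ landing in the image fixes the complement, so the subbasic sets defining the two topologies correspond. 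To prove the image is \emph{closed}, I would argue that a homeomorphism $f \in \homeo_{cpt}(X)$ lies in the image exactly when it fixes $X \setminus A$ pointwise; this condition is closed in the compact-open topology (for each point $x \in X\setminus A$ and each neighborhood, the evaluation constraint is closed), so the image is an intersection of closed sets.

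For part~(b), I would use the characterization of CSHP via Proposition~\ref{cofin:prop:admissible}: since $X$ has CSHP, the compact-open topology on $\homeo_{cpt}(X)$ is countably tight and coincides with the colimit space topology of $\{\homeo_K(X)\}_{K\in\mathscr{K}(X)}$. The key observation is that $\mathscr{K}(A)$ is cofinal in (indeed a subset of) $\mathscr{K}(X)$ consisting of those compact sets contained in $A$, and $\homeo_K(A) = \homeo_K(X)$ under the embedding of part~(a) whenever $K \subseteq A$. The subgroup $\homeo_{cpt}(A)$ is the closed subgroup of $\homeo_{cpt}(X)$ consisting of homeomorphisms supported in $A$, so by Remark~\ref{remark:cts:acp} the family $\{\homeo_K(X)\cap \homeo_{cpt}(A)\}_{K\in\mathscr{K}(X)} = \{\homeo_{K\cap A}(A)\}_{K\in\mathscr{K}(X)}$ satisfies ACP and its colimit is $\homeo_{cpt}(A)$. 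Since $\{K \cap A \mid K \in \mathscr{K}(X)\} = \mathscr{K}(A)$, this reindexes to the statement that $\colim_{L\in\mathscr{K}(A)} \homeo_L(A) = \homeo_{cpt}(A)$ with the colimit space topology agreeing with the compact-open topology, which is precisely CSHP for $A$.

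The main obstacle I anticipate is the careful bookkeeping in part~(a) showing that the compact-open topology induced by $\beta A$ on $\homeo_{cpt}(A)$ matches the subspace topology it inherits from $\homeo_{cpt}(X)$ (with its $\beta X$-induced compact-open topology); this requires identifying $\beta A$ with the clopen piece $\operatorname{cl}_{\beta X} A$ of $\beta X$ and verifying that compact sets and open sets used in subbasic neighborhoods restrict and extend compatibly. Once the embedding in~(a) is established as a \emph{closed} topological subgroup, part~(b) follows cleanly from Remark~\ref{remark:cts:acp} together with the cofinality identity $\mathscr{K}(A) = \{K \cap A \mid K \in \mathscr{K}(X)\}$, so the conceptual work is concentrated in part~(a).
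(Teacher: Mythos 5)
Your proof is correct and follows essentially the same route as the paper: in part (a) you extend homeomorphisms by the identity and identify $\beta A$ with the clopen copy $\operatorname{cl}_{\beta X} A \subseteq \beta X$ to match the two compact-open topologies, and in part (b) you apply Remark~\ref{remark:cts:acp} to the closed subgroup $H=\homeo_{cpt}(A)$ and reindex via $K\mapsto K\cap A$, which is exactly the paper's argument. One aside in your part (b) is false, though harmless because you never use it: CSHP does not imply that $\homeo_{cpt}(X)$ is countably tight (the paper's example following the proof of Theorem~\ref{thm:main:homeo}(a)--(b) shows $\homeo_{[0,\lambda]}(\kappa)$ has tightness at least $\operatorname{cf}(\lambda)$, yet $\kappa^{n}$ has CSHP by Theorem~\ref{thm:main:homeo}(c)), and similarly $\mathscr{K}(A)$ is not cofinal in $\mathscr{K}(X)$ as literally stated; what your argument actually relies on is only the correct identity $\{K\cap A \mid K\in\mathscr{K}(X)\}=\mathscr{K}(A)$ together with Remark~\ref{remark:cts:acp}, so the proof stands.
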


\begin{proof} (a) Since $A$ is clopen,  $\beta A$ embeds as a clopen subset into $\beta X$, and every homeomorphism of $\beta A$ can be extended to a homeomorphism of $\beta X$ by defining it as the identity on $\beta X \backslash \beta A$. Thus, $\homeo(\beta A)$ embeds as a topological subgroup into $\homeo(\beta X)$, where the groups are equipped with the respective compact-open topologies. Therefore, $\homeo_{cpt}(A)$ embeds as a topological subgroup into $\homeo_{cpt}(X)$, and one can identify
\begin{align}
    \homeo_{cpt}(A) = \{f \in \homeo_{cpt}(X) \mid f_{|X\backslash A} = \operatorname{id}_{X\backslash A}\}.
\end{align}
Hence, $\homeo_{cpt}(A)$ is closed in $\homeo_{cpt}(X)$.

(b) Since $X$ has CSHP, by Remark~\ref{remark:cts:acp}, for every closed subgroup $H$ of $\homeo_{cpt}(X)$, one has
\begin{align}
    H = \colim\limits_{K \in \mathscr{K}(X)} (\homeo_K(X)\cap H).
\end{align}
In particular, for $H=\homeo_{cpt}(A)$, one obtains
\begin{align}
     \homeo_{cpt}(A)= \colim\limits_{K \in \mathscr{K}(X)} (\homeo_K(X)\cap \homeo_{cpt}(A))  =
     \colim\limits_{K \in \mathscr{K}(A)} \homeo_K(A),
\end{align}
and hence $A$ has CSHP.
\end{proof}

If $\kappa$ is a cardinal and $\lambda_1,\ldots,\lambda_j < \kappa$ are successor ordinals, then $\kappa^n \times \lambda_1 \times \cdots \times \lambda_j$ is a clopen subset of $\kappa^{n+j}$. Thus, Lemma~\ref{cshp:lemma:clopen} yields the following reduction.

\begin{corollary}  \label{cshp:cor:clopen} 
Let $\kappa$ be a cardinal. If the space $\kappa^n$ has CSHP for every $n$, then $\kappa^n \times \lambda_1 \times \cdots \times \lambda_j$ has CSHP for every $n$ and successor ordinals $\lambda_1,\ldots,\lambda_j < \kappa$. \qed
\end{corollary}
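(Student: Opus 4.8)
The plan is to realize $\kappa^n \times \lambda_1 \times \cdots \times \lambda_j$ as a clopen subspace of $\kappa^{n+j}$ and then invoke Lemma~\ref{cshp:lemma:clopen}(b), which asserts that CSHP is inherited by clopen subsets. Since by hypothesis $\kappa^m$ has CSHP for every $m\in\mathbb{N}$---in particular for $m=n+j$---the conclusion will then be immediate. This is precisely the reduction announced in the sentence preceding the corollary, so the only content that requires checking is the clopenness claim.

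First I would verify that clopenness. Each $\lambda_i < \kappa$ is a successor ordinal, say $\lambda_i = \mu_i + 1$ with $\mu_i < \kappa$, so that as a subset of the ordinal space $\kappa$ one has $\lambda_i = [0,\mu_i]$. In the order topology this set is open, being the initial segment $[0,\mu_i+1)$, and it is closed, because its complement $\{x \in \kappa \mid x > \mu_i\}$ is a (relatively) open final segment; hence each $\lambda_i$ is clopen in $\kappa$. Writing $\kappa^n \times \lambda_1 \times \cdots \times \lambda_j$ as the product of the factors $\kappa,\ldots,\kappa,\lambda_1,\ldots,\lambda_j$ inside $\kappa^{n+j} = \kappa \times \cdots \times \kappa$ exhibits it as a finite product of clopen sets, and a finite product of clopen sets is clopen in the product space.

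With the clopenness established, I would apply Lemma~\ref{cshp:lemma:clopen}(b) to the clopen subset $A = \kappa^n \times \lambda_1 \times \cdots \times \lambda_j$ of $X = \kappa^{n+j}$, concluding that $A$ has CSHP. I do not expect any genuine obstacle here: the entire argument rests on the elementary observation that successor ordinals below $\kappa$ are clopen in $\kappa$, with all the substantive work---the stability of CSHP under passage to clopen subgroups via the closed-subgroup identity of Remark~\ref{remark:cts:acp}---already packaged in Lemma~\ref{cshp:lemma:clopen}.
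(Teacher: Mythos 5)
Your argument is correct and is exactly the paper's own reduction: the authors justify the corollary by the same observation that $\kappa^n \times \lambda_1 \times \cdots \times \lambda_j$ is clopen in $\kappa^{n+j}$ (stated in the sentence preceding the corollary) followed by an appeal to Lemma~\ref{cshp:lemma:clopen}(b). Your explicit verification that each successor ordinal $\lambda_i = [0,\mu_i]$ is clopen in $\kappa$, and that a finite product of clopen sets is clopen, simply fills in the detail the paper leaves tacit.
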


For a Tychonoff space $X$ and a finite clopen partition $\mathcal{A}=\{A_1,\ldots,A_k\}$ of $X$ (i.e., each $A_i$ is clopen), put
\begin{align}
    U(\mathcal{A}):=\{f \in \homeo_{cpt}(X) \mid f(A_i)=A_i \mbox{ for all } i \},
\end{align}
the subgroup of homeomorphisms $f$ such that each $A_i$ is $f$-invariant.

\begin{lemma} \label{cshp:lemma:0dim}
Let $X$ be a zero-dimensional space. Then the family
\begin{align}
    \{U(\mathcal{A}) \mid \mathcal{A} \mbox{ is a finite clopen partition of $X$} \}
\end{align}
is a base at the identity for $\homeo_{cpt}(X)$.
\end{lemma}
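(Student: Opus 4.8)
The plan is to verify the two defining properties of a base at the identity: that each $U(\mathcal{A})$ is an open neighbourhood of $\operatorname{id}$, and that every basic neighbourhood of $\operatorname{id}$ in the compact-open topology (induced by $\beta X$) contains some $U(\mathcal{A})$. Throughout I would work inside $\homeo(\beta X)$, using that each $f \in \homeo_{cpt}(X)$ extends to $f^\beta$ fixing the remainder $\beta X \setminus X$ pointwise, together with the basic fact that for clopen $A \subseteq X$ the sets $A$ and $X \setminus A$ are completely separated, so $\overline{A}$ is clopen in $\beta X$ and $P \mapsto P \cap X$ is a bijection between the clopen subsets of $\beta X$ and those of $X$. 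In particular, a finite clopen partition $\mathcal{A} = \{A_1,\dots,A_k\}$ of $X$ yields a finite clopen partition $\{\overline{A_1},\dots,\overline{A_k}\}$ of $\beta X$, and $f \in U(\mathcal{A})$ precisely when $f^\beta(\overline{A_i}) = \overline{A_i}$ for every $i$.

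For openness, I would write $U(\mathcal{A}) = \bigcap_{i=1}^k \{f \mid f^\beta(\overline{A_i}) \subseteq \overline{A_i}\}$. Each set on the right is a subbasic compact-open set, since $\overline{A_i}$ is simultaneously compact and open in $\beta X$; and the inclusions $f^\beta(\overline{A_i}) \subseteq \overline{A_i}$ for every $i$ force equality, because $f^\beta$ is a bijection and the $\overline{A_i}$ partition $\beta X$. Thus $U(\mathcal{A})$ is open and plainly contains $\operatorname{id}$.

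For the base property, it suffices to treat a subbasic neighbourhood $W = \{f \mid f^\beta(K) \subseteq U\}$ with $K$ compact and $U$ open in $\beta X$, and $K \subseteq U$ (as $\operatorname{id} \in W$); a general basic neighbourhood is a finite intersection of such, and the common refinement of finitely many finite clopen partitions is again one. The key reduction is to produce a clopen set $P \subseteq \beta X$ with $K \subseteq P \subseteq U$. Granting this, put $A := P \cap X$, a clopen subset of $X$, and take $\mathcal{A} = \{A,\, X \setminus A\}$, so that $\overline{A} = P$. For $f \in U(\mathcal{A})$ one has $f^\beta(P) = P$, whence $f^\beta(K) \subseteq f^\beta(P) = P \subseteq U$; that is, $U(\mathcal{A}) \subseteq W$, as required.

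The main obstacle is exactly the production of the separating clopen set $P$ with $K \subseteq P \subseteq U$ in $\beta X$, which is where zero-dimensionality of $X$ must be deployed: I would cover $K$ by clopen subsets of $\beta X$ lying inside $U$, extract a finite subcover by compactness of $K$, and let $P$ be their union. The delicate point is that a clopen neighbourhood base is needed in $\beta X$ (not merely in $X$) around the points of $K$, including any lying in the remainder; securing this is what lets zero-dimensionality pass from $X$ to $\beta X$, and is the step that genuinely uses the hypothesis. Everything else — the bijection and partition bookkeeping, the passage between clopen subsets of $X$ and of $\beta X$, and the reduction to subbasic neighbourhoods — is routine.
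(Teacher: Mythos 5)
Your plan is structurally the same as the paper's proof: the clopen-set correspondence between $X$ and $\beta X$, openness of each $U(\mathcal{A})$ as a finite intersection of subbasic compact-open sets (with bijectivity of $f^\beta$ upgrading the inclusions $f^\beta(\overline{A_i})\subseteq\overline{A_i}$ to equalities), reduction to a single subbasic neighbourhood via common refinements, and the two-set partition built from a clopen $P$ with $K\subseteq P\subseteq U$. The paper organizes this by invoking the correspondence to assume ``without loss of generality that $X$ is compact,'' after which $P$ is obtained at once: in a compact zero-dimensional space every point of $K$ has a clopen neighbourhood contained in $U$, and the union of a finite subcover is clopen.

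The genuine gap is the step you explicitly defer: producing clopen neighbourhoods in $\beta X$ at the points of $K$, including points of the remainder. This is not a routine verification that ``lets zero-dimensionality pass from $X$ to $\beta X$'' --- that transfer is false in general. A Tychonoff space has zero-dimensional \v{C}ech--Stone compactification exactly when it is \emph{strongly} zero-dimensional ($\dim X=0$), and there exist zero-dimensional spaces that are not strongly zero-dimensional (Dowker's example, Roy's space). For such $X$, the compact space $\beta X$ has a connected subset $C$ containing distinct points $p,q$; taking $K=\{p\}$ and $U=\beta X\setminus\{q\}$, no clopen $P$ with $K\subseteq P\subseteq U$ can exist, because in a compact Hausdorff space every clopen set containing $p$ contains the whole component of $p$. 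So the route you sketch cannot be completed from the stated hypothesis alone; it requires either strong zero-dimensionality of $X$ or compactness (where the two notions coincide). In fairness, the paper's ``WLOG compact'' reduction leans on the same transfer without comment --- harmlessly so for its applications, since the spaces it feeds into the lemma (compact sets $\downarrow\alpha\subseteq\kappa^n$ and discrete spaces) are strongly zero-dimensional --- but this is precisely the point a complete proof must address rather than flag as delicate.
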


\begin{proof}
Since there is a one-to-one correspondence between clopen partitions of $X$ and of $\beta X$, and $\homeo_{cpt}(X)$ is a topological subgroup of $\homeo(\beta X)$ (where the latter is equipped with the compact-open topology), we may assume without loss of generality that $X$ is compact.

If $\mathcal{A}=\{A_1,\ldots,A_k\}$ is a finite clopen partition of $X$, then each $A_i$ is compact, and
\begin{align}
    U(\mathcal{A}) = \bigcap\limits_{i=1}^k \{f \in \homeo_{cpt}(X) \mid f(A_i) \subseteq A_i\}
\end{align}
is an open neighborhood of the identity in $\homeo_{cpt}(X)$. Conversely, consider a subbasic open set $W=\{f\in \homeo_{cpt}(X) \mid f(K) \subseteq U\}$ containing the identity, where $K$ is compact and $U$ is open in~$X$. Since $W$ contains the identity, $K\subseteq U$, and there is a clopen subset $A_1$ such that $K \subseteq A_1 \subseteq U$, because $X$ is zero-dimensional. Thus, for $\mathcal{A}:=\{A_1,X\backslash A_1\}$, one has $U(\mathcal{A})\subseteq W$. Since the collection of sets of the form $U(\mathcal{A})$ is a filter base, this completes the proof.
\end{proof}

\begin{corollary} \label{cshp:cor:discrete}
If a Tychonoff space $X$ contains an infinite discrete clopen subset, then $X$ does not have CSHP. 
\end{corollary}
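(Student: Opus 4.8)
The plan is to reduce to the case of an infinite discrete space and then exhibit a discrepancy between the colimit space topology and the compact-open topology. Let $D \subseteq X$ be an infinite discrete clopen subset. Since $D$ is clopen, Lemma~\ref{cshp:lemma:clopen}(b) says that if $X$ has CSHP then so does $D$; taking the contrapositive, it suffices to show that $D$ itself fails CSHP. I would therefore work entirely inside $D$ and forget about $X$.

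First I would identify both sides of the CSHP equation as abstract groups. Because $D$ is discrete, its compact subsets are exactly its finite subsets, and for finite $K \subseteq D$ the group $\homeo_K(D)$ is the finite symmetric group on $K$. Hence $\homeo_{cpt}(D)$ is the finitary permutation group of $D$, realized as $\bigcup_{K \in \mathscr{K}(D)} \homeo_K(D)$ with $K$ ranging over the finite subsets. Next I would compute the colimit space topology: since each $\homeo_K(D)$ is finite and thus discrete, a subset of $\colim_{K \in \mathscr{K}(D)} \homeo_K(D)$ is open as soon as its trace on every $\homeo_K(D)$ is open, which is automatic. Therefore the colimit space topology is the discrete topology.

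The crux is to show that the compact-open topology on $\homeo_{cpt}(D)$ induced by $\beta D$ is \emph{not} discrete, i.e.\ that the identity is not isolated. Here I would invoke Lemma~\ref{cshp:lemma:0dim}: as $D$ is zero-dimensional, the sets $U(\mathcal{A})$, ranging over finite clopen partitions $\mathcal{A}$ of $D$, form a base at the identity. Given any finite clopen partition $\mathcal{A} = \{A_1,\ldots,A_k\}$ of $D$, finiteness of the partition together with infiniteness of $D$ forces some block $A_i$ to be infinite, hence to contain two distinct points $a,b$. The transposition interchanging $a$ and $b$ has finite support, preserves every block of $\mathcal{A}$, and so lies in $U(\mathcal{A}) \setminus \{\operatorname{id}\}$. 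Thus no basic neighborhood of the identity collapses to $\{\operatorname{id}\}$, and the compact-open topology is not discrete. The two topologies then disagree, $D$ fails CSHP, and consequently so does $X$.

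The main obstacle is the last paragraph: one must check that, under the identification of $\homeo_{cpt}(D)$ with the finitary permutation group, the neighborhoods $U(\mathcal{A})$ supplied by Lemma~\ref{cshp:lemma:0dim} really do correspond to the block-preserving subgroups $\{f \mid f(A_i)=A_i \text{ for all } i\}$, so that the transposition lands inside them; this uses that a finitely supported permutation satisfies $f^\beta(\overline{A_i}) = \overline{f(A_i)}$. Once this translation is in place, the transposition argument is immediate, and the remaining verifications are routine.
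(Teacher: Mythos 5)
Your proposal is correct and follows essentially the same route as the paper: reduce to an infinite discrete space via Lemma~\ref{cshp:lemma:clopen}(b), note the colimit space topology is discrete because each $\homeo_K(D)$ is finite, and use Lemma~\ref{cshp:lemma:0dim} to see the compact-open topology is not discrete (the paper observes each $U(\mathcal{A})$ is an infinite group, which is your transposition argument). Also, the translation issue you flag at the end is vacuous, since the paper defines $U(\mathcal{A})$ directly by the condition $f(A_i)=A_i$ on $X$ itself, so your transposition lies in $U(\mathcal{A})$ by definition.
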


\begin{proof}
By Lemma~\ref{cshp:lemma:clopen}(b), it suffices to show that if $X$ is an infinite discrete space, then it does not have CSHP. If $X$ is infinite discrete, then $\homeo_K(X)$ is finite for every $K \in \mathscr{K}(X)$, and thus $\colim\limits_{K\in \mathscr{K}(X)} \homeo_K(X)$ is discrete too; however, by Lemma~\ref{cshp:lemma:0dim}, $\homeo_{cpt}(X)$ is not discrete, because $U(\mathcal{A})$ is an infinite group for every finite partition $\mathcal{A}$ of $X$.
\end{proof}

Our next step is to describe open neighborhoods of the identity in $\homeo_K(\kappa^n)=\homeo(K)$ for some special compact open subsets $K \subseteq \kappa^n$. To that end, we describe finite clopen partitions of spaces of the form $\lambda_1 \times \cdots \times \lambda_n$, where $\lambda_i$ are ordinals with $\operatorname{cf}(\lambda_i)\neq \omega$. (Observe that the space $\lambda_1 \times \cdots \times \lambda_n$ is long if and only if $\operatorname{cf}(\lambda_i)\neq \omega$ for every $i$.)
To that end, we introduce the notion of a grid partition.

\begin{definition}
Let $\lambda_1,\ldots,\lambda_n$ be ordinals, and put $X=\lambda_1 \times \cdots \times \lambda_n$.

\begin{enumerate}
    \item 
For a finite subset $F_i=\{x_1 < \cdots < x_N\}\subseteq \lambda_i$, the {\itshape grid partition of $\lambda_i$ with respect to $F_i$} is
\begin{align}
    \mathcal{G}_{F_i}^{\lambda_i}:=\{[0,x_1]\} \cup \{[x_j+1,x_{j+1}] \mid j = 1,\ldots,N-1\} \cup \{[x_N+1,\cdot)\}],
\end{align}
where $[\alpha,\beta]:=\{x \in \lambda_i \mid \alpha \leq x \leq \beta\}$, and $[\alpha,\cdot):=\{x \in \lambda_i \mid \alpha \leq x\}$.

    \item
For finite subsets $F_i \subseteq \lambda_i$ ($i=1,\ldots,n$), the {\itshape grid partition of $X$ with respect to $\{F_i\}_{i=1}^n$} is
\begin{align}
\mathcal{G}_{\{F_i\}_{i=1}^n}^X:= \left\{ \prod\limits_{i=1}^n U_i \mid U_i \in \mathcal{G}_{F_i}^{\lambda_i} \right\}.
\end{align}

\end{enumerate}
\end{definition}

\begin{theorem} \label{cshp:thm:grid}
Let $\lambda_1,\ldots,\lambda_n$ be ordinals with $\operatorname{cf}(\lambda_i)\neq \omega$, and put $X=\lambda_1 \times \cdots \times \lambda_n$. For every finite clopen partition $\mathcal{A}$ of $X$ there are finite subsets $F_i \subseteq \lambda_i$ such that $\mathcal{G}_{\{F_i\}_{i=1}^n}^X$ is a refinement of $\mathcal{A}$.
\end{theorem}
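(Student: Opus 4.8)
The plan is to recast the statement in terms of finite-valued continuous maps and argue by induction on $n$. Encode the partition $\mathcal{A}=\{A_1,\dots,A_k\}$ as the map $c\colon X\to\mathcal{A}$ sending each point to the unique block containing it; since every $A_l$ is clopen, $c$ is continuous into the finite discrete set $\mathcal{A}$. A grid partition $\mathcal{G}_{\{F_i\}_{i=1}^n}^X$ refines $\mathcal{A}$ exactly when $c$ is constant on each of its boxes, so it suffices to produce finite sets $F_i\subseteq\lambda_i$ with this property for an arbitrary continuous map $c\colon\lambda_1\times\cdots\times\lambda_n\to D$ into a finite discrete space $D$.

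For $n=1$, I would call a successor $\delta+1\in\lambda_1$ a \emph{jump point} if $c(\delta)\neq c(\delta+1)$, and show there are only finitely many. Otherwise the jump points would contain a strictly increasing sequence $\delta_m+1$, whose supremum $\mu$ satisfies $\mu<\lambda_1$ because $\operatorname{cf}(\lambda_1)\neq\omega$; continuity of $c$ at the limit ordinal $\mu$ would force $c$ to be constant on a tail $(\beta,\mu]$, contradicting the presence of jumps cofinally below $\mu$. Letting $F_1$ consist of the predecessors of the jump points then makes $c$ constant on each interval of $\mathcal{G}_{F_1}^{\lambda_1}$.

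The inductive step is the heart of the argument. Write $X=\lambda_1\times X'$ with $X'=\lambda_2\times\cdots\times\lambda_n$, and let $F_1^\ast$ be the set of successors $\delta+1\in\lambda_1$ for which $c(\delta,x')\neq c(\delta+1,x')$ for some $x'\in X'$; the key claim is that $F_1^\ast$ is finite. Suppose not, and pick an increasing sequence $\delta_m+1\in F_1^\ast$ with witnesses $x'_m\in X'$, so again $\mu:=\sup_m(\delta_m+1)<\lambda_1$ is a limit ordinal. The decisive point is to confine the witnesses: since $\operatorname{cf}(\lambda_i)\neq\omega$ for each $i\ge 2$, the countable set $\{(x'_m)_i\mid m\in\mathbb{N}\}$ is bounded by some $\sigma_i<\lambda_i$, so every $x'_m$ lies in the compact subproduct $P:=\prod_{i=2}^n[0,\sigma_i]$. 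As $P$ is compact, $\{x'_m\}$ has a cluster point $p\in P$, and continuity of $c$ at $(\mu,p)$ provides $\beta<\mu$ and a neighborhood $O'\ni p$ such that $c$ is constant on $(\beta,\mu]\times O'$. Choosing $m$ with $\delta_m>\beta$ and $x'_m\in O'$ then places both $(\delta_m,x'_m)$ and $(\delta_m+1,x'_m)$ in this box, contradicting $c(\delta_m,x'_m)\neq c(\delta_m+1,x'_m)$. This tube-lemma-style step—using uncountable cofinality once to keep $\mu$ below $\lambda_1$ and once to trap the witnesses in a compact set—is exactly where the hypothesis $\operatorname{cf}(\lambda_i)\neq\omega$ is indispensable, and I expect it to be the main obstacle.

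With $F_1$ taken as the predecessors of $F_1^\ast$, the map $c$ is constant along the first coordinate on each interval $I\in\mathcal{G}_{F_1}^{\lambda_1}$, since the jumps of every slice $c(\,\cdot\,,x')$ lie in $F_1^\ast$ and none lie strictly inside $I$; picking a representative $\xi_I\in I$ yields continuous maps $c_I:=c(\xi_I,\cdot)\colon X'\to D$. Applying the inductive hypothesis to each of the finitely many $c_I$ and setting $F_i:=\bigcup_I F_i^{(I)}$ for $i\ge 2$ gives a single grid on $X'$ on which every $c_I$ is constant. Since $c(\xi,x')=c_I(x')$ for all $\xi\in I$, the map $c$ is then constant on each box $I\times B$ of $\mathcal{G}_{\{F_i\}_{i=1}^n}^X$, completing the induction; applying this to the coloring $c$ associated with $\mathcal{A}$ proves the theorem.
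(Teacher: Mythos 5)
Your proof is correct, but it takes a genuinely different route from the paper's. The paper never does an induction on the number of factors: it passes to the Stone--\v{C}ech compactification, noting that each $\lambda_i$ is sequentially compact (as $\operatorname{cf}(\lambda_i)\neq\omega$), hence $X$ is pseudocompact, so Glicksberg's theorem gives $\beta X=\prod_{i=1}^n\beta\lambda_i$, again a product of ordinals; it then proves a structural lemma (via the lexicographic order and well-foundedness) that every compact open subset of a product of ordinals is a finite disjoint union of boxes $\prod_{i=1}^n[x_i,y_i]$ with each $x_i$ zero or a successor, applies this to the blocks $\operatorname{cl}_{\beta X}A_j$, and takes $F_i$ to be the set of right endpoints $y_i^{(j)}<\lambda_i$. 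You instead work directly on $X$, encode $\mathcal{A}$ as a continuous map $c\colon X\to D$ into a finite discrete space, and induct on $n$: the base case is a jump-point count using $\operatorname{cf}(\lambda_1)\neq\omega$, and the inductive step is your tube-lemma-style argument, which uses uncountable cofinality twice --- once to keep $\mu=\sup_m(\delta_m+1)$ below $\lambda_1$, and once to confine the witnesses $x'_m$ to a compact subproduct so that a cluster point exists and continuity at $(\mu,p)$ yields the contradiction; the slice maps $c(\xi_I,\cdot)$ then feed the inductive hypothesis, and taking unions of the resulting grid sets works because enlarging $F_i$ refines the grid. Your argument is more elementary and self-contained (no $\beta X$, no Glicksberg), at the cost of a longer dimension induction; the paper's argument is shorter once the box-decomposition lemma is available, and that lemma is a reusable structural fact about compact open subsets of products of ordinals, whereas your method produces the grid sets only implicitly through the induction. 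One small point to make fully rigorous in a write-up: the step ``no jump points strictly inside an interval $I$ implies the slice is constant on $I$'' needs the least-counterexample argument with continuity at limit ordinals, which you invoke in the base case and should cite again in the inductive step.
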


In order to prove Theorem~\ref{cshp:thm:grid}, we first need a lemma.

\begin{lemma} \label{cshp:lemma:boxes}
Let $\lambda_1,\ldots,\lambda_n$ be ordinals, put $X=\lambda_1 \times \cdots \times \lambda_n$, and let $K\subseteq X$ be a non-empty compact subset. Then:

\begin{enumerate}
    \item 
$K$ has a maximal element with respect to the lexicographic order on $X$; and

    \item 
if, in addition, $K$ is open, then $K$ is the disjoint union of finitely many compact open sets of the form $\prod\limits_{i=1}^n [x_i,y_i]$, where for each $i$, either $x_i=0$ or $x_i$ is a successor ordinal.
\end{enumerate}
\end{lemma}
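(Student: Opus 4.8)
The plan is to prove (a) first by a coordinate-by-coordinate induction using compactness, and then to deduce (b) from (a) by peeling off one box at a time, the recursion being justified by the well-foundedness of the lexicographic order. For (a), note that the projection $\pi_1\colon X\to\lambda_1$ is continuous, so $\pi_1(K)$ is a non-empty compact subset of the ordinal $\lambda_1$; being compact it is closed and bounded, so its supremum lies below $\lambda_1$ and belongs to the set, giving a greatest element $a_1$. The slice $K^{(1)}:=K\cap\pi_1^{-1}(\{a_1\})$ is a non-empty compact subset of $\{a_1\}\times\lambda_2\times\cdots\times\lambda_n$. Iterating — at stage $i$ taking $a_i:=\max\pi_i(K^{(i-1)})$ and $K^{(i)}:=K^{(i-1)}\cap\pi_i^{-1}(\{a_i\})$ — produces a point $(a_1,\dots,a_n)\in K$. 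If $(b_1,\dots,b_n)\in K$ agrees with it on coordinates $1,\dots,i-1$ and differs at $i$, then $(b_1,\dots,b_n)\in K^{(i-1)}$ forces $b_i\in\pi_i(K^{(i-1)})$, whence $b_i<a_i$; thus $(a_1,\dots,a_n)$ is the lexicographic maximum.

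For (b), the key observation is that the lexicographic order on the finite product $X$ of ordinals is itself a well-order, so I may run a well-founded induction on $m:=\max_{\mathrm{lex}}K$, whose existence is guaranteed by (a). Given a non-empty compact open $K$ with lexicographic maximum $m=(a_1,\dots,a_n)$, I would carve out a single clopen box $B$ with top corner $m$ sitting inside $K$ (constructed below), and then apply the inductive hypothesis to $K':=K\setminus B$. Since $B$ is clopen, $K'$ is again compact and open; and since $m\in B$ while every point of $K$ is $\le_{\mathrm{lex}}m$, every point of $K'$ is strictly below $m$, so $\max_{\mathrm{lex}}K'<_{\mathrm{lex}}m$. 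The inductive hypothesis writes $K'$ as a finite disjoint union of clopen boxes, and adjoining $B$ (disjoint from $K'$ by construction) gives the decomposition of $K$.

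It remains to construct $B$, and this is where the openness of $K$ and the requirement that each $x_i$ be $0$ or a successor enter. Since $K$ is open and $m\in K$, there is a basic open box $\prod_i U_i\subseteq K$ with $a_i\in U_i$. In each coordinate I choose a clopen interval $[x_i,a_i]\subseteq U_i$ containing $a_i$ by cases: if $a_i=0$ or $a_i$ is a successor, the singleton $\{a_i\}=[a_i,a_i]$ is already clopen and $x_i:=a_i$ is $0$ or a successor; if $a_i$ is a limit ordinal, I pick $\alpha<a_i$ with $(\alpha,a_i]\subseteq U_i$ and set $x_i:=\alpha+1$, so that $[x_i,a_i]=(\alpha,a_i]$ is clopen with a successor lower endpoint. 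Then $B:=\prod_i[x_i,a_i]$ is a clopen box contained in $K$ with lexicographic maximum $m$. The main obstacle is exactly this last point: one must verify that $[x_i,a_i]$ is genuinely \emph{open} in the order topology, which is precisely the condition that its lower endpoint be $0$ or a successor, and the limit case is the only one that uses the openness of $K$ to extract such an endpoint. Everything else — that $B$ and set-differences of clopen sets are clopen, and that the recursion terminates — follows at once from well-foundedness.
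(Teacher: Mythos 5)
Your proof is correct and follows essentially the same route as the paper's: part (a) by iteratively slicing $K$ along the maximum of each coordinate projection, and part (b) by carving out a clopen box $\prod_i[x_i,a_i]$ at the lexicographic maximum and recursing, with termination guaranteed by the well-orderedness of $(X,\preceq)$. Your explicit case analysis (limit versus successor/zero endpoints) merely spells out the step the paper leaves implicit when it asserts the existence of the box inside the open set $K$.
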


\begin{proof}
Let $\preceq$ denote the lexicographic order on $X$.

(a) Let $\pi_i \colon X \rightarrow \lambda_i$ denote the canonical projection for $i=1,\ldots,n$. We construct $K_1,\ldots,K_{n+1}$ inductively. Put $K_1=K$. Suppose that the compact non-empty sets $K_1,\ldots,K_i$ have already been constructed. The set $\pi_i(K_i)$ is compact and non-empty, and so $a_i:=\max \pi_i(K_i)$ exists. Put 
\begin{align}
K_{i+1}:=\pi_i^{-1}(\{a_i\}) \cap K_i.    
\end{align}
Then $K_{n+1}=\{(a_1,\ldots,a_n)\}$ is a singleton, and $(a_1,\ldots,a_n)=\max(K,\preceq)$.

(b) Let $a=(a_1,\ldots,a_n)\in K$ be the maximal element of $K$ with respect to $\preceq$. Since $K$ is open, there is $x=(x_1,\ldots,x_n)\in X$ such that $a\in \prod\limits_{i=1}^n [x_i,a_i]\subseteq K$, and either $x_i=0$ or $x_i$ is a successor ordinal. The set $U=\prod\limits_{i=1}^n [x_i,a_i]$ is clopen, and so $K^\prime:=K\backslash U$ is again compact open. If $K^\prime=\emptyset$, then we are done. Otherwise, $\max(K^\prime,\preceq) \precneqq \max(K,\preceq)$, and one may repeat the same argument for $K^\prime$. Since each $\lambda_i$ is well-ordered, so is $(X,\preceq)$. Therefore, this process will terminate after finitely many steps.
\end{proof}

We are now ready to prove Theorem~\ref{cshp:thm:grid}.

\begin{proof}[Proof of Theorem~\ref{cshp:thm:grid}.] Let $\mathcal{A}=\{A_1,\ldots,A_m\}$ be a finite clopen partition of $X$. Then 
\begin{align}
    \mathcal{A}^\prime:=\{\operatorname{cl}_{\beta X} A_1,\ldots,\operatorname{cl}_{\beta X} A_m\}
\end{align}
is a partition of $\beta X$ into compact open subsets. 

In order to apply  Lemma~\ref{cshp:lemma:boxes}, we show that $\beta X$ is a product of ordinals. Each space $\lambda_i$ is sequentially compact, because $\operatorname{cf}(\lambda_i)\neq \omega$. Thus, the product $X=\prod\limits_{i=1}^n \lambda_i$ is sequentially compact, and in particular, it is pseudocompact (cf.~\cite[3.10.35, 3.10.30]{Engel}). Therefore, by Glicksberg's Theorem (\cite[Theorem~1]{Glick2}),
\begin{align}
    \beta X = \prod\limits_{i=1}^n \beta \lambda_i.
\end{align}
If $\operatorname{cf}(\lambda_i)=1$, then $\beta \lambda_i=\lambda_i$, and if $\operatorname{cf}(\lambda_i) > \omega$, then $\beta \lambda_i = \lambda_i +1$ (\cite[5N1]{GilJer}). Consequently, $\beta X$ is also a product of ordinals. 

By Lemma~\ref{cshp:lemma:boxes}, each $\operatorname{cl}_{\beta X} A_j$ is the disjoint union of compact open sets of the form $\prod\limits_{i=1}^n [x_i,y_i]$, where for each $i$, either $x_i=0$ or $x_i$ is a successor ordinal. Thus, $\mathcal{A}^\prime$ has a compact open refinement 
\begin{align}
        \mathcal{A}^{\prime\prime}:=\left\{\prod\limits_{i=1}^n [x_i^{(1)},y_i^{(1)}], \ldots, \prod\limits_{i=1}^n [x_i^{(l)},y_i^{(l)}]\right\},
\end{align}
where $x_i^{(j)}=0$ or $x_i^{(j)}$ is a successor ordinal for every $i$ and $j$. Put
\begin{align}
    F_i:=\{y_i^{(j)} \mid j=1,\ldots,l \mbox{ and } y_i^{(j)} < \lambda_i\}.
\end{align}
We claim that the grid partition $\mathcal{G}_{\{F_i\}_{i=1}^n}^{\beta X}$ is a refinement of $\mathcal{A}^{\prime\prime}$. Let $M\in \mathcal{G}_{\{F_i\}_{i=1}^n}^{\beta X}$. Then there are $\alpha=(\alpha_1,\ldots,\alpha_n)$ and $\gamma=(\gamma_1,\ldots,\gamma_n)$ in $\beta X$ such that $M=\prod\limits_{i=1}^n [\alpha_i,\gamma_i]$. Since $\mathcal{A}^{\prime\prime}$ covers $\beta X$, there is $k$ such that $\alpha \in \prod\limits_{i=1}^n [x_i^{(k)},y_i^{(k)}]$. We show that $M \subseteq \prod\limits_{i=1}^n [x_i^{(k)},y_i^{(k)}]$. To that end, it suffices to prove that $\gamma \in \prod\limits_{i=1}^n [x_i^{(k)},y_i^{(k)}]$, or equivalently, that $\gamma_i \leq y_i^{(k)}$ for every $i$. Let $1 \leq i \leq n$. If $\alpha_i=0$, then $\gamma_i = \min F_i$ (or $\gamma_i=\lambda_i$ if $F_i=\emptyset$), and thus $\gamma_i \leq y_i^{(k)}$. Otherwise, $\alpha_i = y_i^{(r)}+1$ for some $r$, and $\gamma_i$ is the next element of $F_i$ (or $\gamma_i=\lambda_i$), and thus again $\gamma_i \leq y_i^{(k)}$. 

This shows that $\mathcal{G}_{\{F_i\}_{i=1}^n}^{\beta X}$ is a refinement of $\mathcal{A}^{\prime\prime}$. Since $F_i \subseteq \lambda_i$ for every $i$, it follows that $\mathcal{G}_{\{F_i\}_{i=1}^n}^{X}$ is a refinement of $\mathcal{A}$.
\end{proof}

The last step before proving part (c) of Theorem~\ref{thm:main:homeo} is a technical proposition that allows one to use grid partitions with a fixed number of points in dealing with open subsets of the colimit space topology on $\colim\limits_{K \in \mathscr{K}(X)} \homeo_K(X)$.

\begin{proposition} \label{cshp:prop:stabilize}
Let $\lambda_1,\ldots,\lambda_n$ be ordinals with $\operatorname{cf}(\lambda_i)\neq \omega$, and put $X=\prod\limits_{i=1}^n \lambda_i$, and let $W$ be an open set containing the identity in the colimit space topology on $\colim\limits_{K \in \mathscr{K}(X)} \homeo_K(X)$. Then there are $m_1,\ldots,m_n\in\mathbb{N}$ and $\alpha^{(1)} \in X$ such that for every $\alpha=(\alpha_1,\ldots,\alpha_n) \geq \alpha^{(1)}$, there are $F_i^\alpha \subseteq \alpha_i$ such that $|F_i^\alpha|=m_i$ and
\begin{align}
    U\left(\mathcal{G}_{\{F_i^\alpha\}_{i=1}^n}^{\downarrow\alpha}\right)  \subseteq W \cap \homeo_{\downarrow\alpha}(X).
\end{align}
\end{proposition}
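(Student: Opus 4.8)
The plan is to show that the minimal ``grid complexity'' needed to fit a basic neighborhood $U(\mathcal{G})$ inside $W$ stabilizes as the compact box $\downarrow\!\alpha:=\prod_{i=1}^n[0,\alpha_i]$ grows. Write $W_\alpha:=W\cap\homeo_{\downarrow\alpha}(X)$; since $W$ is open in the colimit space topology, each $W_\alpha$ is an open neighborhood of the identity in $\homeo_{\downarrow\alpha}(X)=\homeo(\downarrow\!\alpha)$ (the box is clopen), and $W_\alpha=W_{\alpha^*}\cap\homeo_{\downarrow\alpha}(X)$ whenever $\alpha\le\alpha^*$. For $\vec m=(m_1,\dots,m_n)\in\mathbb{N}^n$, say $Q(\alpha;\vec m)$ holds if there are $F_i\subseteq\alpha_i$ with $|F_i|\le m_i$ and $U(\mathcal{G}_{\{F_i\}_{i=1}^n}^{\downarrow\alpha})\subseteq W_\alpha$. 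I will produce a single $\vec m$ and a threshold $\gamma$ with $Q(\alpha;\vec m)$ for all $\alpha\ge\gamma$. Passing from $Q$ to the exact-size statement of the proposition is then a routine padding step: adding points to the $F_i$ only refines the grid and hence shrinks $U(\mathcal{G})$, so the inclusion is preserved, once $\alpha^{(1)}\ge\gamma$ is chosen large enough in each coordinate to accommodate $m_i$ points (reducing $m_i$ in any direction where $\lambda_i$ is a small successor, where the box is eventually fixed anyway).

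Two easy observations frame the argument. First, $Q(\alpha;\cdot)$ is upward closed in $\vec m$, so the bad sets $B(\vec m):=\{\alpha\in X : \neg Q(\alpha;\vec m)\}$ satisfy $B(\vec m')\subseteq B(\vec m)$ for $\vec m\le\vec m'$. Second, for every fixed $\alpha$ some $Q(\alpha;\vec m)$ holds: since $\downarrow\!\alpha$ is compact and zero-dimensional, Lemma~\ref{cshp:lemma:0dim} supplies a finite clopen partition $\mathcal{A}$ with $U(\mathcal{A})\subseteq W_\alpha$, and Theorem~\ref{cshp:thm:grid} (applicable because each $[0,\alpha_i]=\alpha_i+1$ is a successor, so $\operatorname{cf}(\alpha_i+1)=1\neq\omega$) yields $F_i$ with $\mathcal{G}_{\{F_i\}}^{\downarrow\alpha}$ refining $\mathcal{A}$; as refining a partition shrinks the corresponding subgroup $U$, we get $U(\mathcal{G}_{\{F_i\}}^{\downarrow\alpha})\subseteq U(\mathcal{A})\subseteq W_\alpha$.

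The crux is a \emph{downward propagation} of $Q$ along the order: if $\alpha\le\alpha^*$ and $Q(\alpha^*;\vec m)$ holds via $F_i\subseteq\alpha^*_i$, then $Q(\alpha;\vec m)$ holds via $F_i':=F_i\cap\alpha_i$. The key computation is that intersecting each block of $\mathcal{G}_{\{F_i\}}^{\downarrow\alpha^*}$ with $\downarrow\!\alpha$ produces exactly the blocks of $\mathcal{G}_{\{F_i'\}}^{\downarrow\alpha}$ (coordinatewise, the grid of $[0,\alpha^*_i]$ cut down to $[0,\alpha_i]$ is the grid of $[0,\alpha_i]$ with cut points $F_i\cap\alpha_i$). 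Consequently any $g\in U(\mathcal{G}_{\{F_i'\}}^{\downarrow\alpha})\subseteq\homeo_{\downarrow\alpha}(X)$, being the identity off $\downarrow\!\alpha$ and preserving each set $B\cap\downarrow\!\alpha$, also preserves each full block $B$ of the large grid; hence $g\in U(\mathcal{G}_{\{F_i\}}^{\downarrow\alpha^*})\subseteq W_{\alpha^*}$, and since $g\in\homeo_{\downarrow\alpha}(X)$ we get $g\in W_\alpha$. Taking contrapositives, each $B(\vec m)$ is \emph{up-closed} in $X$. I expect this restriction computation to be the main obstacle, since one must verify carefully that support-in-$\downarrow\!\alpha$ homeomorphisms extend compatibly and that the cut-down grid is genuinely a grid partition.

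Finally I would run a diagonal argument powered by the fact that $X$, ordered coordinatewise, is long: since each $\operatorname{cf}(\lambda_i)\neq\omega$, every countable subset of $X$ has an upper bound. Suppose no uniform $\vec m$ worked, so every $B(\vec m)$ is cofinal. Put $\vec m^{(k)}=(k,\dots,k)$ and $B_k:=B(\vec m^{(k)})$, a decreasing chain of cofinal up-closed sets; choose $\alpha^{(k)}\in B_k$ and let $\alpha^*\in X$ bound the countable set $\{\alpha^{(k)}\}$. For each $j$ and any $k\ge j$ we have $\alpha^{(k)}\in B_k\subseteq B_j$ and $\alpha^*\ge\alpha^{(k)}$, so up-closedness gives $\alpha^*\in B_j$; thus $\alpha^*\in\bigcap_j B_j$, i.e.\ $\neg Q(\alpha^*;\vec m)$ for every $\vec m$, contradicting the second observation. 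Hence some $\vec m$ and threshold $\gamma$ satisfy $Q(\alpha;\vec m)$ for all $\alpha\ge\gamma$, and the padding step completes the proof.
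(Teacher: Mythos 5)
Your proposal is correct and takes essentially the same route as the paper's proof: both obtain, for each box $\downarrow\alpha$, a grid neighborhood inside $W$ via Lemma~\ref{cshp:lemma:0dim} and Theorem~\ref{cshp:thm:grid}, both rest on the key restriction property that replacing $F_i$ by $F_i\cap\alpha_i$ preserves the inclusion (which the paper asserts and you verify in detail via the block-trace computation), and both invoke longness of $(X,\leq)$ to stabilize. The only difference is bookkeeping: the paper tracks the lexicographically minimal size vector, a monotone map into $(\mathbb{N}^n,\preceq)$ whose countable long image has a maximum, which yields the exact sizes $|F_i^\alpha|=m_i$ at once, whereas your up-closed bad sets plus diagonal argument give only the bound $|F_i|\leq m_i$ and require the subsequent padding step with its (correctly flagged) care over small successor coordinates.
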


\begin{proof}
Since $W$ is open in the colimit space topology, the intersection $W \cap \homeo_{\downarrow\alpha}(X)$ is open in $\homeo_{\downarrow\alpha}(X)$ for every $\alpha \in X$. One has
\begin{align}
    \homeo_{\downarrow\alpha}(X) \cong \homeo(\downarrow \alpha)=\homeo_{cpt}(\downarrow \alpha),
\end{align}
because $\downarrow \alpha$ is a compact open subset of the zero-dimensional space $X$. Thus, by Lemma~\ref{cshp:lemma:0dim}, for every $\alpha \in X$ there is a finite clopen partition $\mathcal{A}_\alpha$ of $\downarrow \alpha$ such that $U(\mathcal{A}_\alpha)\subseteq W \cap \homeo_{\downarrow\alpha}(X)$. By Theorem~\ref{cshp:thm:grid} applied to $X=\downarrow \alpha$, each $\mathcal{A}_\alpha$ has a grid partition refinement $\mathcal{G}_{\{E_i^\alpha\}_{i=1}^n}^{\downarrow \alpha}$ with $E_i \subseteq \alpha_i$, and consequently $U(\mathcal{G}_{\{E_i^\alpha\}_{i=1}^n}^{\downarrow \alpha}) \subseteq W \cap \homeo_{\downarrow\alpha}(X)$.

We show now that it is possible to choose the grid points $E_i^\alpha$ in a way that the sizes of $E_i^\alpha$ stabilize. To that end, let $\preceq$ denote the lexicographic order on $\mathbb{N}^n$. For every $\alpha \in X$, put
\begin{align}
    (m_1^\alpha,\ldots,m_n^\alpha):= \min\limits_{\preceq} \left\{(|E_1^\alpha|,\ldots,|E_n^\alpha|) \mid E_i^\alpha \subseteq \alpha_i, \, U\left(\mathcal{G}_{\{E_i^\alpha\}_{i=1}^n}^{\downarrow \alpha}\right) \subseteq W \cap \homeo_{\downarrow\alpha}(X) \right\}.
\end{align}
We show that the map $f\colon X\rightarrow (\mathbb{N}^n,\preceq)$ defined by $\alpha\mapsto (m_1^\alpha,\ldots,m_n^\alpha)$ is monotone. Let $\alpha \leq \beta \in X$, and pick $F_i^\beta \subseteq \beta_i$ such that $|F_i^\beta|=m_i^\beta$ and $U(\mathcal{G}_{\{F_i^\beta\}_{i=1}^n}^{\downarrow \beta}) \subseteq W \cap \homeo_{\downarrow\beta}(X)$. For $E_i^\alpha = F_i^\beta \cap \alpha_i$, one has $U(\mathcal{G}_{\{E_i^\alpha\}_{i=1}^n}^{\downarrow \alpha}) \subseteq W \cap \homeo_{\downarrow\alpha}(X)$ and $|E_i^\alpha|\leq |F_i^\beta|=m_i^\beta$ for every $i$. In particular, 
\begin{align}
    (m_1^\alpha,\ldots,m_n^\alpha) \preceq (|E_1^\alpha|,\ldots,|E_n^\alpha|) \preceq (|F_1^\alpha|,\ldots,|F_n^\alpha|) =(m_1^\beta,\ldots,m_n^\beta).
\end{align}
Since $\operatorname{cf}(\lambda_i)\neq \omega$, the directed set $(X,\leq)$ is long, and thus so is its image $f(X)$ under the order-preserving map $f$. Therefore, the countable set $f(X)$ has an upper bound $(m_1,\ldots,m_n) \in f(X)$,  and it is the maximal element of $f(X)$. Pick $\alpha^{(1)} \in X$ such that $f(\alpha^{(1)})=(m_1,\ldots,m_n)$. Then for every $\alpha \geq \alpha^{(1)}$, one has $m_i^\alpha=m_i$ for every $i$. Hence, one may pick  $F_i^\alpha \subseteq \alpha_i$ such that $|F_i^\alpha|=m_i$ and $U(\mathcal{G}_{\{F_i^\alpha\}_{i=1}^n}^{\downarrow \beta}) \subseteq W \cap \homeo_{\downarrow\alpha}(X)$.
\end{proof}

We turn now to the proof of part (c) of Theorem~\ref{thm:main:homeo}. We will show that when the $\lambda_i$ are equal to a regular uncountable cardinal $\kappa$, then not only do the cardinalities of the $F_i^\alpha$ stabilize, but also the sets themselves do. To do so, recall that a subset $S$ of a cardinal $\kappa$ is called \emph{stationary} if it intersects every closed and unbounded subset of $\kappa$. Fodor's Pressing Down Lemma states that:

\begin{ftheorem}[{\cite[Theorem 8.7]{SetTheoryJech}}] \label{cshp:thm:Fodor}
Let $\kappa$ be a regular uncountable cardinal, and let $S\subseteq \kappa$ be a stationary subset. If $f\colon S\rightarrow \kappa$ satisfies $f(x) <x$ for every $x\in S$, then there is a stationary set $T\subseteq S$ such that $f_{|T}$ is constant.
\end{ftheorem}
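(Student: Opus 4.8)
The plan is to argue by contradiction, using the classical technique of diagonal intersections of closed unbounded sets. Suppose, for contradiction, that $f$ is constant on no stationary subset of $S$. Then for every $\alpha < \kappa$ the preimage $S_\alpha := \{x \in S \mid f(x) = \alpha\}$ is non-stationary, so I may fix a club (closed and unbounded) set $C_\alpha \subseteq \kappa$ with $C_\alpha \cap S_\alpha = \emptyset$; equivalently, every $x \in C_\alpha \cap S$ satisfies $f(x) \neq \alpha$.

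Next I would form the diagonal intersection $C := \{\xi < \kappa \mid \xi \in C_\alpha \text{ for all } \alpha < \xi\}$. Granting for the moment that $C$ is a club, the contradiction comes quickly: since $S$ is stationary it meets the club $C$, so I may pick $x \in S \cap C$. As $f(x) < x$, the value $\alpha := f(x)$ satisfies $\alpha < x$, whence $x \in C_\alpha$ by the defining property of $C$. But then $x \in C_\alpha \cap S$ forces $f(x) \neq \alpha = f(x)$, which is absurd.

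The crux of the argument---and the step I expect to be the main obstacle---is showing that the diagonal intersection $C$ of the $\kappa$-many clubs $\{C_\alpha\}_{\alpha < \kappa}$ is itself a club; this is exactly where the regularity and uncountability of $\kappa$ enter. Closedness is the easier half: if $\xi$ is a limit of points of $C$ and $\alpha < \xi$, then the points of $C$ strictly above $\alpha$ are cofinal in $\xi$ and all lie in $C_\alpha$, so $\xi \in C_\alpha$ by closedness of $C_\alpha$; as $\alpha < \xi$ was arbitrary, $\xi \in C$. For unboundedness I would first record the auxiliary fact that the intersection of fewer than $\kappa$ clubs is again a club, whose unboundedness rests on $\operatorname{cf}(\kappa) > \mu$ for each $\mu < \kappa$---that is, on regularity. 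Given $\beta_0 < \kappa$, I then build an increasing sequence $\beta_0 < \beta_1 < \cdots$ with $\beta_{n+1} \in \bigcap_{\gamma < \beta_n} C_\gamma$, which is possible because each such intersection of fewer than $\kappa$ clubs is unbounded. Since $\operatorname{cf}(\kappa) > \omega$, the supremum $\beta := \sup_n \beta_n$ stays below $\kappa$; and for any $\gamma < \beta$, choosing $n$ with $\gamma < \beta_n$, the cofinal tail $\{\beta_m \mid m > n\}$ lies in $C_\gamma$, whence $\beta \in C_\gamma$ by closedness. Thus $\beta \in C$ with $\beta > \beta_0$, so $C$ is unbounded. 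The two background facts invoked along the way---that a stationary set meets every club, and that an intersection of fewer than $\kappa$ clubs is a club---are themselves routine and can be cited or proved in a line.
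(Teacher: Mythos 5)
Your proof is correct and complete: the contradiction setup, the diagonal intersection $C=\{\xi<\kappa \mid \xi\in C_\alpha \text{ for all } \alpha<\xi\}$, and the verification that $C$ is closed and unbounded are all sound, with regularity and uncountable cofinality of $\kappa$ invoked exactly where they are needed. Note that the paper itself does not prove this statement at all --- it is Fodor's Pressing Down Lemma, quoted as a known fact with a citation to Jech's \emph{Set Theory} --- and your diagonal-intersection argument is precisely the standard proof given in that cited reference, so there is no divergence of approach to report.
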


\begin{proof}[Proof of Theorem~\ref{thm:main:homeo}(c).]
By Corollary~\ref{cshp:cor:clopen}, it suffices to prove the statement for $X=\kappa^n$, where $\kappa$ is a regular uncountable cardinal. The colimit space topology on 
$\colim\limits_{K \in \mathscr{K}(X)} \homeo_K(X)$ is finer than the topology of $\homeo_{cpt}(X)$. We show the converse.

Let $W$ be an open subset of $\colim\limits_{K \in \mathscr{K}(X)} \homeo_K(X)$ and let $h\in W$. Since translation is continuous in the colimit space topology, $W_0:=Wh^{-1}$ is an open set containing the identity. Thus, by Proposition~\ref{cshp:prop:stabilize}, there are $m_1,\ldots,m_n \in \mathbb{N}$ and $\alpha^{(1)}\in X$ such that for every $\alpha \geq \alpha^{(1)}$ there are $F_i^\alpha \subseteq \alpha$ with $|F_i^\alpha|=m_i$ and 
\begin{align} \label{cshp:eq:W_0}
    U\left(\mathcal{G}_{\{F_i^\alpha\}_{i=1}^n}^{\downarrow\alpha}\right)  \subseteq W_0 \cap \homeo_{\downarrow\alpha}(X).
\end{align}
Let $\Delta\colon \kappa \rightarrow \kappa^n$ denote the diagonal map, and put
$\gamma^{(1)} = \max\limits_{1 \leq i \leq n} \alpha_i^{(1)}$, where $\alpha^{(1)}=(\alpha_1^{(1)},\ldots,\alpha_n^{(1)})$. For $\gamma \geq \gamma^{(1)}$, for  every $(i,j)$ such that $1\leq i \leq n$ and  $1\leq j \leq m_i$, let $f_{i,j}(\gamma)$ denote the $j$-th point in $F_i^{\Delta \gamma}$. Since $F_i^{\Delta \gamma} \subseteq (\Delta \gamma)_i =\gamma$, one has $f_{i,j}(\gamma) < \gamma$. By repeated application of Theorem~\ref{cshp:thm:Fodor}, one obtains a stationary set $T\subseteq [\gamma^{(1)},\kappa)$ such that $f_{i,j|T}$ is constant for every $1\leq i \leq n$ and  $1\leq j \leq m_i$. In other words, there are $F_i \subseteq \kappa$ such that $F_i^{\Delta \gamma} = F_i$ for every $\gamma \in T$. We show that
\begin{align}
    U\left(\mathcal{G}_{\{F_i\}_{i=1}^n}^{X}\right)  \subseteq W_0. 
\end{align}
Let $g \in U(\mathcal{G}_{\{F_i\}_{i=1}^n}^{X})$. Since $g$ has compact support, there is $\beta$ such that $g \in \homeo_{\downarrow \Delta \beta }(X)$. Since $T$ is stationary, in particular, it intersects $[\beta,\kappa)$, and so there is $\gamma \in T$ such that $\beta \leq \gamma$. Therefore,
\begin{align}
g \in U\left(\mathcal{G}_{\{F_i\}_{i=1}^n}^{X}\right) \cap \homeo_{\downarrow \Delta \gamma }(X) = 
U\left(\mathcal{G}_{\{F_i^{\Delta \gamma}\}_{i=1}^n}^{\downarrow \Delta \gamma}\right) \stackrel{(\ref{cshp:eq:W_0})}{\subseteq} W_0.
\end{align}
Hence, $U(\mathcal{G}_{\{F_i\}_{i=1}^n}^{X}) h \subseteq W$, and $W$ is open in the topology of $\homeo_{cpt}(X)$.
\end{proof}

\section*{Acknowledgments}

We would like to express our heartfelt gratitude to Karl H. Hofmann for introducing us to each other, and to the organizers of the 2015 Summer Conference on Topology and its Applications for providing an environment conducive for this collaboration to form. We wish to thank Fr\'ed\'eric Mynard for the valuable correspondence. We are grateful to Karen Kipper for her kind help in proofreading this paper for grammar and punctuation. We are grateful to the anonymous referee for their detailed and helpful suggestions
that have contributed to the articulation and clarity of the manuscript.

{\footnotesize

\bibliography{dahmen-lukacs}

}

\begin{samepage}

\bigskip
\noindent
\begin{tabular}{l @{\hspace{1.6cm}} l}
Rafael Dahmen						    & G\'abor Luk\'acs \\
Department of Mathematics				& Department of Mathematics and Statistics\\
Karlsruhe Institute of Technology		& Dalhousie University\\
D-76128 Karlsruhe					    & Halifax, B3H 3J5, Nova Scotia\\
Germany                			        & Canada\\
{\itshape rafael.dahmen@kit.edu}        & {\itshape lukacs@topgroups.ca}
\end{tabular}

\end{samepage}

\end{document}